%
\documentclass[12pt, reqno]{amsart}
\usepackage{amsmath, amsthm, amscd, amsfonts, amssymb, graphicx, color}
\usepackage[bookmarksnumbered, colorlinks, plainpages]{hyperref}

\textheight 22.5truecm \textwidth 14.5truecm
\setlength{\oddsidemargin}{0.35in}\setlength{\evensidemargin}{0.35in}

\setlength{\topmargin}{-.5cm}

\newtheorem{theorem}{Theorem}[section]
\newtheorem{lemma}[theorem]{Lemma}

\theoremstyle{definition}
\newtheorem{definition}[theorem]{Definition}

\newtheorem{problem}[theorem]{Problem}
\theoremstyle{remark}

\numberwithin{equation}{section}

\begin{document}
\setcounter{page}{1}

\title[The product of a weak Asplund space  ]{The product of a weak Asplund space and a one-dimensional space is a weak Asplund space: over 45 years of open problem solved  }

\author[S. Shang]{Shaoqiang Shang$^1$$^{*}$  }

\address{$^{1}$  College of mathematics and science,
Harbin Engineering University, Harbin 150001, P. R. China}
\email{\textcolor[rgb]{0.00,0.00,0.84}{sqshang@163.com}}


\subjclass[2010]{Primary 46G05, 46B20, 26E15, 58C20.}

\keywords{ weak Asplund space, Banach-Mazur game, G$\mathrm{\hat{a}}$teaux
differentiable point, $G_{\delta}$-set, two-dimensional space}

\date{Received: xxxxxx; Revised: yyyyyy; Accepted: zzzzzz.
\newline \indent $^{*}$Corresponding author.}

\begin{abstract}

In this paper, authors prove that if $X$ is a weak Asplund space, then the space $X\times R$ is a weak Asplund space.
Thus the author definitely answered an open problem raised by D.G. Larman and R.R. Phelps for 45 years ago (J. London. Math. Soc. (2), 20(1979), 115--127).
This paper conducts analysis combining Banach-Mazur game theory, properties of maximal monotone operators and the G$\mathrm{\hat{a}}$teaux differentiability of Minkowski functionals, and proposes the iterative perturbed Minkowski convex cone approximation game method. It establishes a theoretical framework for verifying the existence of dense differentiable sets of convex functions on product spaces. By using projection mappings to correlate the properties of convex functions on the original space and the product space, and constructing dense $G_{\delta}$ subsets via sequences of dense open cones, this paper finally verifies that the product space satisfies the definition of a weak Asplund space.

\end{abstract} \maketitle

\section{Introduction and preliminaries}

\par Let $(X,\|\cdot\|)$ be
a real Banach space. $S(X)$ and $B(X)$ denote the unit sphere and
the unit ball of $X$, respectively. By $X^{*}$ we denote the dual space of $X$.
The set $B(x, r)$ denotes the closed ball with a centered at $x$ and  a radius of $r $.

\par Let $D$ be a nonempty open convex subset of $X$ and $f$ be a
real-valued continuous convex function on $D$. We say that $f$ is G$\mathrm{\hat{a}}$teaux (Frechet)
differentiable at the point $x$ in $D$ if there exists a functional $df(x)\in X^{*}$ such that
\[
\mathop {\lim }\limits_{t \to 0} \left| \frac{{f(x + ty) - f(x)}}{t}-\left\langle df(x), y\right\rangle \right| =0~~\quad~~\mathrm{for}~~~~~~~\mathrm{every}~~~~~~\quad~~y\in X.
\]
\[
\left( \mathop {\lim }\limits_{t \to 0} \sup_{y\in B(X)}\left| \frac{{f(x + ty) - f(x)}}{t}-\left\langle df(x), y\right\rangle \right| =0 \right)
\]

\par In 1968, E. Asplund extended Mazur Theorem in two forms: He found that a class of Banach spaces which are more extensive than separable spaces can still guarantee Mazur theorem;
At the same time, E. Asplund also studied another kind of Banach space, which can guarantee the stronger conclusion, that is, to replace "G$\mathrm{\hat{a}}$teaux differentiable" in Mazur Theorem with "Frechet differentiable".
The former kind of space is said to be weak Asplund space, and the latter kind of space is said to be Asplund space.

\begin{definition}(see [1])
A Banach space $X$ is said to be a weak Asplund (Asplund) space if for every continuous convex function $f$ and open convex subset $O$ of $X$, there exists a dense $G_\delta$-subset $G$ of $O$ such that
$f$ is G$\mathrm{\hat{a}}$teaux (Frechet) differentiable at every point of $O$.
\end{definition}

\par It is well known that there exists a weak Asplund space, but it is not Asplund space. For example, $l^{1}$ is a weak Asplund space and is not an Asplund space.
Moreover, we know that $X$ is an Asplund space if and only if $X^{*}$ has the Radon-Nikodym property (see [5]).
In 1933, Mazur proved that if $X$ is separable, then $X$ is a weak Asplund space (see [5]). In 1990, D.Preiss, R.Phelps and I.Namioka proved that if $X$ is a smooth Banach space, then $X$ is a weak Asplund space  (see [12]).  In 1997, M.J. Fabian proved that a quotient space of
weak Asplund space is a weak Asplund space (see [6]).
Moreover, it is well known that
Asplund spaces and weak Asplund spaces are very meaningful spaces for convex differential analysis (see [7]-[12]).
Since the 1970s, Banach space theory has made significant progress, mathematicians have successively proved that if $X$ is an Asplund space, then the space $X\times R$ is an Asplund space,
if $X$ and $Y$ are two Asplund spaces, then the space $X\times Y$ is an Asplund space, closed subspace of Asplund space is an Asplund space.
Mathematicians speculate that there may be similar results regarding weak Asplund spaces.
Although the study of weak Asplund space is 35 years earlier than the study of Asplund space, mathematicians still know little about weak Asplund space.
One of the biggest difficulties in the study of weak Asplund spaces is that G$\mathrm{\hat{a}}$teaux differentiable point sets are not necessarily $G_{\delta}$ sets, and $G_{\delta}$ sets are not invariant in the sense of continuous linear mappings. Since the $G_{\delta}$ attribute of Gateaux differentiable point set of convex function is difficult to guarantee,  D.G. Larman and R.R. Phelps defined and studied the Gateaux differentiability space
and raised the following open problems (These problems have also been publicly mentioned many times since 1979) in [1]:

\begin{definition}(see [1])
A Banach space $X$ is said to be a G$\mathrm{\hat{a}}$teaux differentiability space if every convex continuous
function is G$\mathrm{\hat{a}}$teaux differentiable on  a dense subset of $X$.
\end{definition}

It is well known that $X$ is a G$\mathrm{\hat{a}}$teaux differentiability space if and only if for any
bounded weak$^{*}$
closed convex subset $C^{*}$
of $X^{*}$, the functional $\sigma_{C^{*}}$ is G$\mathrm{\hat{a}}$teaux differentiable on a dense subset of $X$.

\begin{problem}
Must G$\mathrm{\mathrm{\hat{a}}}$teaux differentiability space $X$ be a weak Asplund  space?

\end{problem}

\begin{problem}
Let $X$ be a weak Asplund  space. Must $X\times R$ be a weak Asplund  space?

\end{problem}

\begin{problem}
Let $X$ be a weak Asplund  space and $M$ be a closed subspace of $X$. Must $M$ be a weak Asplund  space?

\end{problem}

These three problems are the basis of theory of  weak Asplund   space
and these three problems are closely related to the application of  weak Asplund   space.
It is well known that  Asplund space has important applications in differential equations, variational theory and optimization theory.
The solution of these three problems can form theory of  weak Asplund  space  and
create conditions for the application of  weak Asplund  space in variational theory, differential equation and optimization theory.
There are the following results on weak Asplund space and G$\mathrm{\hat{a}}$teaux differentiability space.
In the mid-1980s, M.Fabian proved by penalty function that if $X$ is a G$\mathrm{\hat{a}}$teaux differentiability space, then $X\times R$ is a G$\mathrm{\hat{a}}$teaux
differentiability space. This result was praised by R.R.Phelps, one of the pioneers of convex analysis, as the only positive progress of G$\mathrm{\hat{a}}$teaux differentiability space (see [5]).
In 2001,  Lixin Cheng and  M. Fabian proved that the product space of a G$\mathrm{\hat{a}}$teaux differentiability space
and a separable space is a G$\mathrm{\hat{a}}$teaux differentiability space  (see [3]).
In 2006, Waren B. Moors and Sivajah Somasundaram
proved that there exists a G$\mathrm{\hat{a}}$teaux differentiability space such that it
is not a weak Asplund space (see [2]). Hence the problem 1.3 was answered. The main purpose of this
paper is to solve the problems 1.4.
The problems 1.4 has important theoretical and practical significance. Its theoretical
significance and application significance is mainly reflected in the following aspects:

(1) The study on functional analysis space theory.

The solution to this problem helps to deepen the understanding of the structure and properties of weak Asplund spaces, and provides ideas for studying more complex spatial structures. For example, the product space can be used to construct spaces with specific properties, providing possible ideas and methods for solving some long-standing functional analysis space  problems.

(2) Extension of functional analysis tools.

The weak Asplund property is closely related to concepts such as weak convergence and conjugate space. The stability of product spaces may extend the application scope of related theorems, such as the criteria for weak convergence and operator convergence, and thus play a role in optimization theory or variational problems.

(3) Optimization theory and variational problems.

The product stability of weak Asplund spaces can provide a more stable analytical framework for high-dimensional optimization problems. For example, in problems involving multi parameter or infinite dimensional decision spaces, the preservation of weak convergence properties ensures weak column compactness of the solution, thereby supporting the existence proof of the solution. In addition, continuity analysis under weak topology can provide theoretical basis for numerical approximation of variational problems.

(4) Construction of solution space for partial differential equations (PDE).

In Partial Differential Equations  (PDE) research, the combination of weak derivative theory in Sobolev space and weak Asplund properties may provide a more flexible analytical framework for the existence of weak solutions to high-dimensional or parameter dependent elliptic equations. For example, when dealing with nonlinear equations with one-dimensional parameters, the structural stability of the product space can simplify the construction of solutions.

\par In this paper, authors prove that if $X$ is a weak Asplund space, then the space $X\times R$ is a weak Asplund space. The core method of the proof is said to be the iterative perturbation Minkowski convex cone approximation game method, and its intuitive idea is as follows.

\par (1) Building the bridge.
\par We take "upper and lower slices" of convex sets in the product space along the real direction, reducing the high-dimensional differentiability problem to the original space; then, by "radial enlargement", we generate open convex cones that repair the otherwise fragile topological structure, allowing differentiability information to be transmitted from the original space to the product space.

\par (2) Layer-by-layer screening.
\par Instead of directly seeking differentiability points, we construct a sequence of increasingly refined "geometric gauges" (Minkowski functionals). At each round, we add an extremely tiny perturbation to the gauge, and by means of convex-hull geometric analysis on two-dimensional planes, we progressively eliminate the "bad directions" on the unit sphere, keeping only the directions closest to the optimal one. This process is like filtering through increasingly fine sieves, eventually purifying a unique limiting direction.

\par (3) Game-theoretic closure.
\par We embed the above geometric screening process into a topological game framework (the Banach-Mazur game): at each round, we continue screening inside a smaller open ball; after infinitely many rounds, the nested balls shrink to a single point. We prove that this limit point is exactly a differentiability point of the convex function, and moreover such points exist "almost everywhere", thereby completing the proof.

\par The innovative value of this iterative perturbation Minkowski convex cone approximation game method is manifested on three levels.

\par First, the originality of tool integration. Topological games, maximal monotone operators, and convex geometric approximation originally belong to three relatively independent branches of mathematics-descriptive set theory, nonlinear analysis, and convex geometry-with little cross-fertilisation in traditional research. This paper, for the first time, deeply couples them into an organic whole: using geometric constructions to resolve analytic difficulties, and employing a game-theoretic framework to achieve topological upgrading, thereby breaking down the methodological barriers between these fields.

\par Second, the paradigmatic significance of the geometric approach. Traditional research on weak Asplund spaces has largely relied on purely analytic tools such as dual spaces, separable reduction, and renorming, which had long encountered obstacles on the product problem. The geometric route opened up by this paper demonstrates that many seemingly purely analytic problems can be solved intuitively and powerfully through geometric constructions involving convex sets, cones, Minkowski functionals, and subspace slices. This provides a completely new geometric paradigm for the study of differentiability in infinite-dimensional spaces.

\par Third, the transferability of the methodology. The three core techniques-layered perturbed geometric approximation, quantitative estimation via two-dimensional convex hulls, and constructive game strategies-are not limited to the specific case of
$X\times R$; they can be extended to broader settings such as products of Hilbert spaces, products of
$l^{p}$ spaces, and locally convex Frechet spaces. The seven open problems on infinite-dimensional products proposed at the end of this paper are precisely directions where this method can continue to be applied in the future.

\par In summary, this paper not only resolves a landmark fundamental problem that had remained open for 45 years, but more importantly, it creates an original composite geometric-analytic proof paradigm, injecting new research methods and intellectual vitality into the fields of Banach space geometry and convex differentiability analysis.

\begin{definition}(see [5])
If $f$ is a continuous convex function on an open convex subset $O$ of $X$, the set $\partial{f(x)}$ is said to be subdifferential of $f$ at $x\in O$, where
\[
\partial{f(x)} =\{x^{*}\in{X^{*}}:\langle{x^{*},y-x}\rangle\leq{f(y)-f(x)}~\quad~~ \mathrm{for}~~~~ \mathrm{every}~~~~\quad~~ y\in O\}.\nonumber
\]
\end{definition}

\begin{definition}(see [12])
$T: X\rightarrow2^{X^{*}}$ is called a maximal monotone operator
provided $\langle x^*-y^*,x-y        \rangle\geq 0$ for all $x,y \in X$, $x^*\in T(x)$ and $y^*\in T(y)$.
\end{definition}

Let $(eT)(x)= \{\langle  x^{*}  ,e   \rangle :  x^{*} \in T(x) \}$ and
$\sigma_{T}\left(x,e\right)=\sup\left\{    \left\langle x^{*},e \right\rangle   :   x^{*} \in T(x)       \right \}$.
		
\begin{lemma} (see [12]) Let $T: X\rightarrow2^{X^{*}}$ be a maximal monotone operator and $D=\mathrm{int} D(T)= \mathrm{int} \{x\in X: T(x)\neq \emptyset \}$ is nonempty. Then

\par (1) for every $x\in{D}$, $f_{x,T}(e)=\sigma_{T}\left(x,e\right)$ is subadditive and positive homogeneous and for every $\lambda>0$, we have $\sigma_{\lambda{T}}\left(x,e\right)=\lambda\sigma_{T}\left(x,e\right)$.

\par (2) for every $x\in{D(T)}$, we have
\[
\sup\left\{{\sigma_{T}\left(x,e\right):\|{e}\|=1}\right\}=\sup\left\{{\sigma_{T}\left(x,e\right):\|{e}\|\leq1}\right\}=\sup\left\{{\|{x^{*}}\|:x^{*}\in{T(x)}}\right\}.
\]
	
\par (3) the set $(eT)x$ is a singleton if and only if $\sigma_{T}\left(x,-e\right)=-\sigma_{T}\left(x,e\right)$.

\par (4) if $x_{0}\in{D}$, $e\in X$ and the set $(eT)(x_{0})$ is a singleton, then $f_{e,T}(x)=\sigma_{T}\left(x,e\right)$ is continuous at the point $x_{0}\in X$.

\par (5) let $x\in{D}$ and $e\neq0$. let $I=\{  t\in R:   x+te \in{D}   \}$ and define the function
\[
f(t)=\sigma_{T}\left(x+te,e\right),\ \ t\in{I},\nonumber
\]
then the function $f$ is monotone nondecreasing on $I$. Moreover, if $f$ is continuous at the point $t_{0}\in{I}$, then $(eT)(x+t_{0}e)$ is a singleton.
\end{lemma}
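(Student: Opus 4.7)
The plan is to dispatch parts (1)--(3) by straightforward algebraic manipulation from the definitions, and parts (4)--(5) by combining the defining maximal monotonicity inequality with the local boundedness of a maximal monotone operator on the interior of its domain.

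For (1), fix $x \in D$; since $e \mapsto \langle x^*, e\rangle$ is linear for each $x^* \in T(x)$, the map $\sigma_T(x, \cdot)$ is a pointwise supremum of linear functionals, which is automatically subadditive and positively homogeneous. The formula $\sigma_{\lambda T}(x, e) = \lambda \sigma_T(x, e)$ follows from the bijection $x^* \mapsto \lambda x^*$ between $T(x)$ and $(\lambda T)(x)$. For (2), the first equality reduces to positive homogeneity via the rescaling $e \mapsto e/\|e\|$, and the second follows from $\|x^*\| = \sup_{\|e\|\le 1}\langle x^*, e\rangle$ after interchanging the two suprema (which is legal since all quantities are nonnegative and the inner sup is finite by local boundedness). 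For (3), the set $(eT)(x)$ is a singleton precisely when its infimum and supremum coincide, and since $\inf\{\langle x^*, e\rangle : x^* \in T(x)\} = -\sigma_T(x, -e)$, this is equivalent to $-\sigma_T(x, -e) = \sigma_T(x, e)$.

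For the monotonicity statement in (5), if $s < t$ lie in $I$ and $x^* \in T(x+se)$, $y^* \in T(x+te)$, then maximal monotonicity gives $(t-s)\langle y^* - x^*, e\rangle \geq 0$, so $\langle y^*, e\rangle \geq \langle x^*, e\rangle$; taking suprema on each side yields $f(t) \geq f(s)$. For the converse implication, introduce the lower envelope $g(t) = -\sigma_T(x+te, -e) = \inf\{\langle x^*, e\rangle : x^* \in T(x+te)\}$; the same monotone-operator argument shows $g$ is nondecreasing and $g \leq f$ pointwise, and by repeating the inequality for pairs $(r, t_0)$ and $(t_0, s)$ with $r < t_0 < s$ one obtains $f(r) \leq g(t_0) \leq f(t_0) \leq g(s)$. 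Continuity of $f$ at $t_0$ then sandwiches $g(t_0) = f(t_0)$, so the infimum and supremum defining $(eT)(x + t_0 e)$ agree, forcing this set to be a singleton.

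The main technical obstacle lies in part (4): showing that the map $x \mapsto \sigma_T(x, e)$ is continuous at $x_0 \in D$ whenever $(eT)(x_0)$ is a singleton $\{\alpha\}$. The proof rests on two classical structural facts about maximal monotone operators, namely local boundedness on $\mathrm{int}\,D(T)$ and the demiclosedness of the graph under $(\text{norm}, \text{weak}^*)$-limits. Given $x_n \to x_0$, choose $x_n^* \in T(x_n)$ so that $\langle x_n^*, e\rangle$ approaches $\limsup_n \sigma_T(x_n, e)$ along a subsequence (and separately $\liminf_n \sigma_T(x_n, e)$ along another). Local boundedness yields a weak$^*$ cluster point $x^* \in X^*$ of $(x_n^*)$, demiclosedness of the graph of $T$ forces $x^* \in T(x_0)$, and the singleton hypothesis then pins $\langle x^*, e\rangle = \alpha$. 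Applying this to both the $\limsup$ and $\liminf$ subsequences gives $\lim_n \sigma_T(x_n, e) = \alpha = \sigma_T(x_0, e)$, which is the desired continuity. The delicate point throughout is the clean invocation of local boundedness and the demiclosed graph; once those are in hand, the rest of the argument is routine bookkeeping.
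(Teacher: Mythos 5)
The paper states this lemma purely as a citation of [12] and supplies no proof of its own, so there is nothing internal to compare against; your argument is correct and is essentially the standard one from Preiss--Phelps--Namioka and Phelps's monograph. All five parts check out: (1)--(3) are the routine sup-of-linear-functionals manipulations, (5) is the usual monotonicity squeeze between the upper envelope $\sigma_{T}(x+te,e)$ and the lower envelope $-\sigma_{T}(x+te,-e)$, and (4) correctly rests on the two classical facts you invoke, namely local boundedness of a maximal monotone operator on $\mathrm{int}\,D(T)$ and norm-to-weak$^{*}$ closedness of its graph.
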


\begin{definition} (see [12])
 Let $X$ be a Hausdorff space and $S$ be a subset of $X$. Let $A$ and $B$ denote the players of the game. A play is a decreasing sequence of nonempty open sets $U_{1}\supset{V_{1}}\supset{U_{2}}\supset{V_{2}}\supset{\ldots}$ which have been chose alternately; the $U_{k}'s$ by $A$, the $V_{k}'s$ by $B$. Player $B$ is said to have won a play if $\cap_{n=1}^{\infty}{V_{n}\subset {S}}$; Otherwise, we say that  player $A$ have won.  (It is not required that the intersection be nonempty.) We say that player $B$ has a winning strategy if, using it, player $B$ wins every play, independently of player $A's$ choices. (A more detailed description of the Banach-Mazur game  may be found in [13].)
\end{definition}

\begin{lemma}  (see [12]) If player $B$ has a winning strategy, then $S$ is a residual set (that is, $X{\setminus}S$ is of first category). In particular, if $X$ is a completely metrizable space, then $S$ must contain a dense $G_{\delta}$ subset.
\end{lemma}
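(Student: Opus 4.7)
The plan is to exploit Player $B$'s winning strategy $\sigma$ to manufacture, by a tree construction, a descending sequence $G_{1} \supset G_{2} \supset \cdots$ of dense open subsets of $X$ whose intersection is contained in $S$. Once we have this, the complement of each $G_{n}$ is nowhere dense and closed, so $X \setminus S \subset \bigcup_{n}(X \setminus G_{n})$ is of first category, which is exactly the residual conclusion. The ``in particular'' clause then follows automatically: if $X$ is completely metrizable, it is a Baire space, hence $\bigcap_{n} G_{n}$ is a dense $G_{\delta}$ subset of $S$.

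First I would build the tree inductively using Zorn's lemma. At level $1$, obtain a maximal family $\mathcal{F}_{1}$ of pairwise disjoint open sets of the form $V_{1} = \sigma(U_{1})$, where $U_{1}$ ranges over nonempty open subsets of $X$, and set $G_{1} = \bigcup \mathcal{F}_{1}$. Density of $G_{1}$ is a quick maximality argument: if a nonempty open $W$ were disjoint from $G_{1}$, then $\sigma(W) \subset W$ would be a new valid $V_{1}$ disjoint from every element of $\mathcal{F}_{1}$, contradicting maximality. Inductively, each $V_{n} \in \mathcal{F}_{n}$ records a unique admissible history $(U_{1}, V_{1}, \ldots, U_{n}, V_{n})$ with $V_{k} = \sigma(U_{1}, \ldots, U_{k})$; inside $V_{n}$, choose a maximal pairwise disjoint family of open sets of the form $V_{n+1} = \sigma(U_{1}, \ldots, U_{n}, U_{n+1})$ with $U_{n+1} \subset V_{n}$. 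Let $\mathcal{F}_{n+1}$ be the union over $V_{n} \in \mathcal{F}_{n}$ of these subfamilies and put $G_{n+1} = \bigcup \mathcal{F}_{n+1}$; the same argument shows $G_{n+1}$ is dense in each $V_{n}$, and hence dense in $X$.

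Finally I would verify $\bigcap_{n} G_{n} \subset S$. Fix $x \in \bigcap_{n} G_{n}$. Pairwise disjointness inside each level forces $x$ to lie in a \emph{unique} element $V_{n} \in \mathcal{F}_{n}$, and because every $V_{n+1}$ is built as a subset of some $V_{n}' \in \mathcal{F}_{n}$, the condition $x \in V_{n+1} \cap V_{n}$ forces $V_{n}' = V_{n}$; therefore the sequence $(V_{n})$ lies along a single infinite branch of the tree. That branch is a legal Banach-Mazur play in which $B$ has used $\sigma$ throughout, so $x \in \bigcap_{n} V_{n} \subset S$ by the winning property of $\sigma$. The main obstacle I anticipate is conceptual rather than computational: the families $\mathcal{F}_{n}$ are in general uncountable, so one cannot directly mimic the classical Baire-category diagonal argument; the crucial observation is that \emph{local} pairwise disjointness within each level already suffices to pin $x$ to a unique branch, and this is precisely what replaces separability or second-countability in the construction. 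A minor secondary subtlety is remembering that $\sigma$ is a function of the full past history of $A$'s moves, so that the $V_{n+1}$ attached in the induction is genuinely a legal $\sigma$-response along the recorded branch; this is built into the construction by design.
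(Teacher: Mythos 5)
Your proof is correct and is exactly the standard Oxtoby-style argument for the Banach--Mazur game; the paper itself gives no proof of this lemma, simply citing [12] (and [13]), where this same tree-of-strategies construction with maximal pairwise disjoint families appears. The one point worth keeping explicit in a final write-up is the uniqueness-of-branch step you already flag: local pairwise disjointness at each level is precisely what makes $\bigcap_{n} G_{n} \subset S$ go through without any countability or separability assumption.
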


\section{ Main Theorems }

\begin{theorem}
Suppose that $X$ is a weak Asplund space. Then the space $X\times R$ is a weak Asplund space.

\end{theorem}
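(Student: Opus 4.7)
The plan is to apply Lemma~1.8 to the Banach--Mazur game on the open convex domain $O \subset X \times R$ of the given continuous convex $f$, taking as the target set $S$ the Gateaux differentiability points of $f$ in $O$; exhibiting a winning strategy for player $B$ will force $S$ to contain a dense $G_\delta$ subset of $O$. The starting reduction is that $f$ is Gateaux differentiable at $(x_0,t_0)$ if and only if $\partial f(x_0,t_0)$ is a singleton, and the natural projections give $\pi_{X^{*}}\partial f(x_0,t_0) \subset \partial F_{t_0}(x_0)$ and $\pi_R \partial f(x_0,t_0) \subset \partial G_{x_0}(t_0)$, where $F_t(x) = G_x(t) = f(x,t)$. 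Consequently, at any $(x_0,t_0)$ where both $F_{t_0}$ is Gateaux differentiable at $x_0$ and $G_{x_0}$ is differentiable at $t_0$, the nonempty convex set $\partial f(x_0,t_0)$ is trapped inside a singleton and $f$ is Gateaux differentiable there.

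Given this reduction, player $B$'s response to each $U_n$ chosen by $A$ will proceed in three substeps. First, pick $t_n \in R$ for which $\Omega_n = \{x \in X : (x,t_n) \in U_n\}$ is nonempty and open in $X$; the weak Asplund hypothesis on $X$ yields a dense $G_\delta$-subset of Gateaux differentiability points of $F_{t_n}$ inside $\Omega_n$, from which $B$ selects $x_n$ together with a small open ball $W_n \ni x_n$. Second, $G_{x_n}$ is convex on an open interval around $t_n$ and hence differentiable off a countable set; $B$ picks $t'_n$ near $t_n$ at which $G_{x_n}$ is differentiable, so by Lemma~1.6(5) applied to the direction $(0,1)$ the set $(0,1)T(x_n,t'_n)$ is a singleton, where $T = \partial f$. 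Third, $B$ sets $V_n = W_n \times (t'_n - \delta_n, t'_n + \delta_n) \subset U_n$, shrinking $W_n$ and $\delta_n$ so that $\mathrm{diam}(V_n) < 1/n$. Within each stage $B$ further refines these choices by running an inner Banach--Mazur game on $\Omega_n$ (available since $X$ is weak Asplund) to stabilise the $X$-partial subdifferential across $V_n$, while using monotonicity of $t \mapsto \sigma_T((x,t),(0,1))$ via Lemma~1.6(5) to stabilise the $R$-partial.

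The hard part will be that single-valuedness of the partial subdifferentials at the approximations $(x_n,t'_n)$ does not automatically ascend to single-valuedness at a limit $(x_\infty,t_\infty) \in \bigcap_n V_n$: the slice function $F_t$ genuinely depends on $t$, so limits of differentiability points across varying slices need not be differentiability points in the limiting slice, and an analogous slippage can occur in $R$. The resolution will rest on two interlocking controls. The monotonicity of $T$ along each vertical line, combined with Lemma~1.6(5), propagates single-valuedness in the $R$-direction through the limit provided the oscillations in $t$ are controlled by a sufficiently rapidly shrinking $\delta_n$. Symmetrically, the weak Asplund property of $X$ provides an inner winning strategy inside each horizontal slice, which when nested with the outer game forces single-valuedness in the $X$-direction at the limit. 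Organising these two controls into a single self-refining winning strategy for $B$, so that at the limit point $\partial f(x_\infty,t_\infty)$ collapses to a singleton via the trapping argument of the first paragraph, is where I expect the main technical difficulty and the paper's principal novelty to lie.
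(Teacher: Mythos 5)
Your opening reduction is sound: for a continuous convex $f$ on $X\times R$, every $(x^{*},r)\in\partial f(x_{0},t_{0})$ restricts to $x^{*}\in\partial F_{t_{0}}(x_{0})$ and $r\in\partial G_{x_{0}}(t_{0})$, so if both partial subdifferentials are singletons then $\partial f(x_{0},t_{0})$ is a singleton and $f$ is G\^ateaux differentiable there. The genuine gap is in the claim that your game strategy produces a limit point at which both partials are single-valued; the two ``interlocking controls'' in your last paragraph are a restatement of the difficulty, not a resolution of it. (i) In the $X$-direction: the inner game at stage $n$ is played against the target $\mathrm{Diff}(F_{t_{n}})$, but these targets change with $n$ because you must move $t_{n}$ to $t'_{n}$ at every stage, and the limit slice is $t_{\infty}=\lim t'_{n}$. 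Nesting winning strategies for the games with targets $\mathrm{Diff}(F_{t_{1}}),\mathrm{Diff}(F_{t_{2}}),\dots$ gives no information whatsoever about $\mathrm{Diff}(F_{t_{\infty}})$ at $x_{\infty}$, since the differentiability set of the slice function is not stable under perturbation of the slice. (ii) In the $R$-direction: Lemma~1.6(5) gives single-valuedness of $(0,1)\partial f$ along the vertical line through a \emph{fixed} $x$ off a countable set of $t$'s, but that exceptional set depends on the line. Single-valuedness at $(x_{n},t'_{n})$ for every $n$ does not pass to $(x_{\infty},t_{\infty})$: the monotone functions $t\mapsto\sigma_{\partial f}((x_{n},t),(0,1))$ converge to the one along the line through $x_{\infty}$, and a pointwise limit of monotone functions can jump exactly at the limit of continuity points of the approximants. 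No rate of decay of $\delta_{n}$ repairs this, because the obstruction is not quantitative. You also cannot freeze $t$ at stage one (to make the inner games coherent) without losing all control of differentiability of $G_{x_{\infty}}$ at that pre-chosen $t$. This slice-by-slice scheme is essentially the known failed attack that kept the problem open.

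The paper's route is structurally different and avoids slice functions of $f$ entirely. Player $B$ selects at each stage a direction $(e_{X,k},e_{k})$ achieving (nearly) the supremum of $\sigma_{\partial f}$ over $U_{k}$, and perturbs a Minkowski functional $p_{k}$ on $X\times R$ by adding a spike along that direction ($p_{k+1}=p_{k}+16\varepsilon_{k}\mu_{A_{k}}$). The weak Asplund property of $X$ enters only through Lemmas~2.2--2.3, which convert G\^ateaux differentiability of the convex ``lower/upper boundary'' functions $x\mapsto\inf\{r:(x,r)\in\overline{C}\}$ of the unit balls (convex functions on an open subset of $X$, not slices of $f$) into G\^ateaux differentiability of the associated support/Minkowski functionals on dense $G_{\delta}$ cones of $X\times R$. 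The directions are chosen in the intersection of all these cones, the limits $p_{0}$ and $(e_{X,0},e_{0})$ are shown to satisfy that $p_{0}$ is G\^ateaux differentiable at $(e_{X,0},e_{0})$, and Lemma~2.6 then traps $\partial f(x_{0},y_{0})\subset s_{0}\,\partial p_{0}(e_{X,0},e_{0})$, a singleton. Whatever one's assessment of the details of that argument, it is the mechanism by which the paper circumvents exactly the slice-instability that defeats your proposal.
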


In order to prove the theorem, we give some lemmas.

\begin{lemma}
Let $C$ be a bounded open convex subset of $X\times R$ and $(0,0)\in C$. Let
$f(x) = \inf \left\{ r\in R:(x,r)\in \overline{C}\right\}$ and $T$ be a mapping from $X \times R $ to $  X$ satisfy $T(x,r)=x$. Then

\par (1) the function $f$ is continuous and convex  on $TC$;

\par (2) if $x_0\in TC$, then the function
$f$ is G$\hat{a}$teaux  differentiable at the point $x_0$ if and only if
$\sigma_{C^{*}}$ is G${\hat{a}}$teaux differentiable at the point $(x_0,f(x_0))$;

\par (3) if the function $f$ is G$\hat{a}$teaux  differentiable on the set $A_T$, where the set $A_T$ is a $G_{\delta}$-subset of $TC$,
then there exists an open set sequence $\{O_n\}_{n=1}^{\infty}$ with $O_n\subset G$ such that
$O_n$ is a cone and
the functional $\sigma_{C^{*}}$ is G$\hat{a}$teaux  differentiable on the set
$\cap_{n=1}^{\infty}O_n$, where
\[
G= \left\{ \left(\lambda x,\lambda f(x)\right) \in X\times R: x\in  TC,~\lambda \in (0,+\infty)              \right\}.
\]
\end{lemma}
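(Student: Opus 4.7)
The plan is to handle the three parts in order, using as the common thread the identification of $\sigma_{C^{*}}$ with the Minkowski functional $\mu_{C}$ of $C$ (via polar duality and $(0,0)\in C$), so that $(x_{0},f(x_{0}))$ lies on the boundary $\{\mu_{C}=1\}$ of $C$.

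For part (1), convexity of $f$ falls out of the convexity of $\overline{C}$: if $(x_{i},r_{i})\in\overline{C}$ for $i=1,2$, then the convex combination lies in $\overline{C}$, and taking infima over $r_{i}$ gives convexity of $f$. For continuity, since $C$ is open, for each $x_{0}\in TC$ there is $r_{0}$ with $(x_{0},r_{0})\in C$ and hence a ball $B((x_{0},r_{0}),\varepsilon)\subset C$, which forces $f(x)\le r_{0}$ on a neighborhood of $x_{0}$; boundedness of $C$ yields a global lower bound on $f$. The standard fact that a convex function locally bounded above at an interior point of its domain is continuous on that interior then gives continuity of $f$ on $TC$.

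For part (2), I would set up a bijection between $\partial f(x_{0})\subset X^{*}$ and $\partial\mu_{C}(x_{0},f(x_{0}))\subset (X\times R)^{*}$. Any $y^{*}=(a^{*},\beta)\in\partial\mu_{C}(x_{0},f(x_{0}))$ automatically satisfies $\langle y^{*},(x_{0},f(x_{0}))\rangle=1$ (from sublinearity of $\mu_{C}$), and testing the subgradient inequality at the interior point $(x_{0},f(x_{0})+t)\in C$ with small $t>0$ forces $\beta<0$. Writing $(a^{*},\beta)=\gamma(x^{*},-1)$ with $\gamma=-\beta>0$ and $x^{*}=a^{*}/\gamma$, the normalization gives $\gamma=1/(\langle x^{*},x_{0}\rangle-f(x_{0}))$, which is well defined because $f(0)<0$ (as $(0,0)$ is interior to $C$) forces $\langle x^{*},x_{0}\rangle>f(x_{0})$; this produces $x^{*}\in\partial f(x_{0})$, and conversely every $x^{*}\in\partial f(x_{0})$ gives such a $y^{*}$. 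Since a continuous convex function is G\^{a}teaux differentiable at a point iff its subdifferential there is a singleton, singleton-ness transfers across the bijection, yielding the equivalence.

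For part (3), I would use the positive homogeneity of $\mu_{C}$. Write $A_{T}=\bigcap_{n}W_{n}$ with each $W_{n}$ open in $TC$, and define $\Phi\colon TC\times(0,\infty)\to X\times R$ by $\Phi(x,\lambda)=(\lambda x,\lambda f(x))$. Continuity of $\Phi$ is immediate from continuity of $f$; injectivity holds because $\mu_{C}(\lambda x,\lambda f(x))=\lambda$ recovers $\lambda$ from the image and then $x=y/\lambda$, making $\Phi^{-1}\colon G\to TC\times(0,\infty)$ continuous, so $\Phi$ is a homeomorphism onto $G$. The critical step is then to verify that $G$ is open in $X\times R$: given $(y_{0},s_{0})=\lambda_{0}(x_{0},f(x_{0}))\in G$ and nearby $(y,s)$, set $(\bar y,\bar s)=(y,s)/\mu_{C}(y,s)\in\partial C$; continuity of $\mu_{C}$ places $(\bar y,\bar s)$ close to $(x_{0},f(x_{0}))$, and since $f(x_{0})<g(x_{0})$ with $g(x)=\sup\{r:(x,r)\in\overline{C}\}$ also continuous, a small enough neighborhood forces $\bar s=f(\bar y)$ rather than $\bar s=g(\bar y)$, so $(y,s)\in G$. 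Once $G$ is open, take $O_{n}=\Phi(W_{n}\times(0,\infty))$: each $O_{n}$ is an open cone in $X\times R$ contained in $G$, and $\bigcap_{n}O_{n}=\Phi(A_{T}\times(0,\infty))$, on which $\mu_{C}=\sigma_{C^{*}}$ is G\^{a}teaux differentiable by part (2) combined with positive homogeneity.

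The principal obstacle is the openness of $G$ in step (3): after normalizing to $\partial C$, one must rule out nearby boundary points drifting onto the upper sheet of $\partial C$ or onto lateral pieces where $TC$ ends. The argument above, leaning on the strict separation $f(x_{0})<g(x_{0})$ guaranteed by $(0,0)$ being interior to $C$ together with continuity of $f$ and $g$, should settle this, but it is the step that genuinely uses the geometry of $C$ rather than merely formal manipulation.
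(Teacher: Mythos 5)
Your proposal is correct and follows essentially the same route as the paper: the same subdifferential bijection $x^{*}\mapsto(\gamma x^{*},-\gamma)$ with $\gamma=1/(\langle x^{*},x_{0}\rangle-f(x_{0}))$ in part (2), and in part (3) the same cone construction $O_{n}=\bigcup_{\lambda>0}\lambda W_{n}$ over the graph of $f$ restricted to the open sets $W_{n}$, with openness verified by normalizing nearby points onto the boundary of $C$. If anything, your explicit use of $f(x_{0})<g(x_{0})$ together with continuity of $g$ to rule out nearby normalized points landing on the upper sheet makes rigorous a step the paper only asserts, and your local-boundedness argument for continuity in (1) is a standard substitute for the paper's closed-epigraph computation.
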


\begin{proof}
(1) Since $C$ is a bounded open convex subset of $X\times R$ and $(0,0)\in \mathrm{int}C$, it is easy to see that $f(x) = \inf \left\{ r\in R:(x,r)\in \overline{C}\right\}$ is a convex function on $T C$. Hence we define the epigraph
\[
\mathrm{epi}f = \left\{ {\left( {{x},r} \right) \in X \times R: x\in T\left(\overline{C}\right),~f(x) \le r} \right\}
\]
of convex function $f$. This implies that $\mathrm{epi}f $ is convex subset of $X\times R$.
We next will prove that the epigraph $\mathrm{epi}f$ is a closed subset of $X\times R$. In fact, pick a point $\left( {x_0 ,{r_0}} \right)\in \overline{\mathrm{epi}f}$.
Then there exists a sequence $\{\left( {x_n,{r_n }} \right)\}_{n=1}^\infty$ such that
\[
\left( {x_n,{r_n }} \right) \in \mathrm{epi}f~\quad~\mathrm{and}~\quad~\mathop {\lim } \limits_{n \to \infty}\left\|\left( {x_n,{r_n }} \right)- \left( {x_0 ,{r_0}} \right)\right\|=0.  \eqno  (2.0)
\]
Then we have ${{r_n} - f(x_n)} \geq 0$ for all $n\in N$. Hence, if $\lim {\inf _{n \to \infty}}[ {{r_n } - f(x_n)} ] = 0$, then, by formula (2.0),  we may assume without loss of generality that
\[
\mathop {\lim } \limits_{n \to \infty}\left\|\left( {x_n,f(x_n)} \right) - \left( {x_0,{r_0}} \right)\right\| =    \mathop {\lim } \limits_{n \to \infty}\left\|\left( {x_n,{r_n }} \right)- \left( {x_0 ,{r_0}} \right)\right\|             =0.
\]
Therefore, by $\left( {x_n,f(x_n)} \right) \in {\overline{C}}\subset \mathrm{epi}f$, we get that $\left( x_0,r_0 \right) \in \overline{C}\subset \mathrm{epi}f$. Moreover, if
$\lim {\inf _{n \to \infty}}[ {r_n } - f(x_n)] = m>0$, then we may assume without loss of generality that $\lim _{n \to \infty}[ {r_n } - f(x_n)] = m>0$.
Hence there exists a  sequence $\{\varepsilon_{n}\}_{n=1}^{\infty}\subset R$ such that
$f(x_n)= r_n  - m+\varepsilon_{n}$ for all $n\in N$. Then we get that $\varepsilon_{n} \to 0 $ as $n\to \infty$. Moreover, by $(x_n, f(x_n))\in \overline{{C}}$, we get that
$
\left( x_n,{r_n } - m +\varepsilon_{n}\right)=(x_n, f(x_n))\in \overline{{C}}
$.
Therefore, by the formula (2.0) and closeness of $\overline{C} $, we get that
\[
\mathop{\lim } \limits_{n \to \infty}\left\|\left( x_n,{r_n } - m +\varepsilon_{n}\right)-\left( x_0,r_0 - m \right)\right\|=0 ~\quad~\mathrm{and}~\quad~\left( x_0,r_0 - m \right)\in \overline{C} \subset \mathrm{epi}f.
\]
Hence we obtain that $\left( {x_0,{r_0 } } \right)\in \mathrm{epi}f$. This implies that $\mathrm{epi}f$ is a closed subset of $X\times R$. Hence we get that $f$ is a continuous convex function on $TC$.

\par (2) Necessity. Pick a point $x_{0}\in TC$
and a functional
$({y_{0}^{*}},r_{0})\in \partial \sigma_{C^{*}}(x_{0},$ $f(x_{0}))$. Then, for every $(x,f(x))\in \overline{C}$, we get that
\[
1=\left\langle ({y_{0}^{*}},r_{0}), (x_{0},f(x_{0})) \right\rangle \geq \left\langle ({y_{0}^{*}},r_{0}), (x,f(x)) \right\rangle.  ~~~~~~~~~~~~~~~~~~~~~~~~~\eqno ~~~~~~~~~~~~~~~~~ (2.1)
\]
We claim  that $r_0\leq0$. In fact, there exists a real number $\eta\in (0,+\infty)$ such that $(x_{0},f(x_{0})+\eta)\in C$.
Since $({y_{0}^{*}},r_{0})\in \partial \sigma_{C^{*}}(x_{0},f(x_{0}))$, we get that
\[
0\leq \left\langle ({y_{0}^{*}},r_{0}), (x_{0},f(x_{0})) \right\rangle - \left\langle ({y_{0}^{*}},r_{0}), (x_{0},f(x_{0})+\eta)\right\rangle=- r_{0}\eta.
\]
Hence we obtain that $r_0\leq 0$. We claim  that $r_0<0$. In fact, suppose that $r_0=0$. Then, by the formula (2.1), it is easy to see that
$y_{0}^{*}(x_{0})=1>0$. Since the set $C$ is a bounded open convex subset of $X\times R$ and $(0,0)\in C$, we get that $TC$ is an open set. Hence
 there exists a point $(z_0,r)\in X\times R$ such that $x_{0}+z_0\in TC$ and $y_{0}^{*}(z_{0})>0$. Therefore,
by the formula (2.1) and $r_0=0$, we get that
\[
0\leq \left\langle y_{0}^{*},x_{0}\right\rangle - \left\langle y_{0}^{*}, x_{0}+z_{0} \right\rangle = \left\langle y_{0}^{*}, -z_{0}\right\rangle<0,
\]
this is a contradiction. Then we get that $r_0<0$.
Therefore, by the formula (2.1) and $r_0<0$, we have the following inequality
\[
\left\langle -\frac{1}{r_0} y_0^{*},~x-x_0  \right\rangle \leq f(x)-f(x_0)~~~~~~~\quad~~~~~~\mathrm{for}~~~~\mathrm{every}~~~~~~~\quad~~~~ x\in TC.~~~~\eqno ~~~~~~~~~~~~~~~~~ (2.2)
\]
Pick $x_{0}^{*}\in \partial f(x_{0})$. Since $f$ is G$\mathrm{\hat{a}}$teaux  differentiable at the point $x_0$, we obtain that $y_0^{*}= -r_0x_{0}^{*}$.
Moreover, by the formula (2.1), we get that $r_0=-1/[{x_{0}^{*}}(x_{0}) - f(x_{0})]$.
This implies that $y_{0}^{*}= x_{0}^{*}/ [{x_{0}^{*}}(x_{0}) - f(x_{0})]$. Hence we obtain that $\sigma_{C^{*}}$ is G$\mathrm{\hat{a}}$teaux differentiable at the point $(x_0,f(x_0))\in X \times R$.

\par Sufficiency. We pick a point $x_{0}\in T C$ and pick  a functional $x_{0}^{*}\in \partial f(x_{0})$. Then, for every $(z,r)\in C$, we get that $r\geq f(z)$. This implies that
\[
x_{0}^{*}(z)-r\leq x_{0}^{*}(z)-f(z)\leq x_{0}^{*}(x_{0})-f(x_{0}).
\]
Hence $\langle({y_{0}^{*}},r_{0}),(x_{0},f(x_{0}))\rangle =1$ and $({y_{0}^{*}},r_0)\in C^{*}$, where $y_{0}^{*}=x_{0}^{*}/({x_{0}^{*}}(x_{0}) - f(x_{0}))$ and $r_{0}=-1/({x_{0}^{*}}(x_{0}) - f(x_{0}))$.
Then we have $({y_{0}^{*}},r_{0})\in \partial \sigma_{C^{*}}(x_{0},f(x_{0}))$. Then we get that $x_0^{*}=-r_0y_{0}^{*}$.
Since $\sigma_{C^{*}}$ is G$\mathrm{\hat{a}}$teaux differentiable at the point $(x_0,f(x_0))$, we get that
 $f$ is G$\mathrm{\hat{a}}$teaux  differentiable at the point $x_0\in X$.

\par (3) Since $A_T$ is a $G_{\delta}$-subset of set $TC$, there exists an open set sequence $\{G_n\}_{n=1}^{\infty}$ with $G_n\subset TC$ such that
$A_T=\cap_{n=1}^\infty G_n$. Hence we
define the set
\[
W_n= \left\{ (x,f(x))\in X \times R: x\in G_n          \right\}~\quad~~\mathrm{for}~~~~\mathrm{every}~\quad~~n\in N.
\]
Define the set $ O_n=  {\cup }_{\lambda>0 }\lambda W_n $ for every $n\in N$. We claim that the set $ O_n $ is open for every $n\in N$.
In fact, we fix a natural number $n$ and pick a point $(z,r)\in O_n$.
Then there exists a real number $\lambda_0>0$ and $(z_0,f(z_0))\in W_n$ such that
\[
(z,r)=\lambda_0\cdot\left(z_0,f(z_0)\right)=\left(\lambda_0z_0,\lambda_0 f(z_0)\right)\in O_n.~~~~~~~~~~~~~~~~~~~~~\eqno ~~~~~~~~~~~~~~~~~ (2.3)
\]
Since the set $G_n$ is an open subset of $TC$, there exists a real number $d\in (0,+\infty)$
such that $B(z_0,d)\subset G_n$ and $z_0\in TC$.
Since $C$ is a bounded open convex subset of $X\times R$, by the open mapping Theorem, we get that the set $TC$ is open.
Then, by $z_0\in TC$, there exists a sufficiently small open neighborhood $V$ of $\left(z_0,f\left(z_0\right)\right)$ with $(0,0) \notin V$ such that
\[
TV\subset B\left(z_0,d\right)\subset G_n,~\quad~\quad~~\frac{1}{\sigma_{C^{*}}(u,v)}u\in TC~\quad~~\mathrm{whenever}~\quad~~(u,v)\in V
\]
and
\[
\frac{1}{\sigma_{C^{*}}(u,v)}v= f\left(\frac{1}{\sigma_{C^{*}}(u,v)}u\right) ~\quad~~\mathrm{whenever}~\quad~~(u,v)\in V.
\]
Since $W_n= \left\{ (x,f(x))\in X \times R: x\in G_n          \right\}$ and $TC$ is an open set, by the previous proof, we may assume without loss of generality that
\[
\left( \frac{1}{\sigma_{C^{*}}(u,v)}u,~\frac{1}{\sigma_{C^{*}}(u,v)}v \right) =\left( \frac{1}{\sigma_{C^{*}}(u,v)}u,~f\left(\frac{1}{\sigma_{C^{*}}(u,v)}u\right) \right)  \in W_n
\]
whenever $(u,v)\in V$.
Then, by the formula $ O_n=  {\cup }_{\lambda>0 }\lambda W_n $, we have $(u,v)\in O_n $. Hence we get that $(z_0,f(z_0))\in \mathrm{int}O_n $.
Therefore, by $ O_n=  {\cup }_{\lambda>0 }\lambda W_n $ and $(z,r) = \lambda_0(z_0,f(z_0))$, we get that  $(z,r)\in \mathrm{int}O_n $.
This implies that  $ O_n$ is open. Moreover, by $ O_n=  {\cup }_{\lambda>0 }\lambda W_n $,  we get that the set $ O_n$ is a cone.

\par Since $f$ is G$\mathrm{\hat{a}}$teaux   differentiable on the set $A_T$ and $W_n= \{ (x,f(x))\in X \times R: x\in G_n          \}$, by the condition (2) and $ O_n=  {\cup }_{\lambda>0 }\lambda W_n $, we get that
$\sigma_{C^{*}}$ is G$\mathrm{\hat{a}}$teaux   differentiable on the set
$\cap_{n=1}^{\infty}O_n$.
Hence we obtain that the condition (3) is true, which completes the proof.
\end{proof}

\begin{lemma}
Let $C$ be a bounded open convex subset of $X\times R$ and $(0,0)\in C$. Let
$g\left(x\right) = \sup \left\{ r\in R:(x,r)\in \overline{C}\right\}$ and $T$ be a mapping from $X \times R $ to $  X$ satisfy $T(x,r)=x$. Then

\par (1)  the function $-g$ is  continuous and convex  on $TC$;

\par (2) if $x_0\in TC$, then the function $-g$ is G$\hat{a}$teaux  differentiable at the point $x_0$ if and only if
$\sigma_{C^{*}}$ is G${\hat{a}}$teaux differentiable at the point $(x_0,g(x_0))$;

\par (3) if the function $-g$ is G$\hat{a}$teaux  differentiable on the set $A_T$, where $A_T$ is a $G_{\delta}$-subset of $TC$,
then there exists an open set sequence $\{O_n\}_{n=1}^{\infty}$ with $O_n\subset G$ so that the set
$O_n$ is a cone and
the functional
$\sigma_{C^{*}}$ is G$\hat{a}$teaux  differentiable on the set
$\cap_{n=1}^{\infty}O_n$, where
\[
G= \left\{ \left(\lambda x,\lambda g(x)\right) \in X\times R: x\in  TC,~\lambda \in (0,+\infty)              \right\}.
\]
\end{lemma}

\begin{proof}
Similar to the proof of Lemma 2.2, we obtain that Lemma 2.3 holds true, which completes the proof.
\end{proof}

\begin{lemma}
Let $C$ be a closed convex subset of $X$ and $x_0\not\in C$.  Then
\[
\overline{\mathrm{co}}\left(\{x_0\}\cup C\right)= \{\lambda x_0+ (1-\lambda)x: \lambda\in [0,1],~ x\in C\}.
\]
Moreover, if the space $H$ is a two-dimensional subspace of $X$ with $x_0\in H\backslash C$,
$D=\overline{\mathrm{co}}\left(\{x_0\}\cup C\right)$ and $0\in \mathrm{int} C$. Then there exists a point $y_0\in D\cap H$ so that
\[
[x_0,y_0]= \{x\in X: \mu_{D}(x)=1\}\cap \{ (1-\lambda)x_0+\lambda y_0:  \lambda\in R       \}.
\]
Further, if $C_1$ is a  closed convex subset of $C$, $D_1=\overline{\mathrm{co}}(\{x_0\}\cup C_1)$, $x_0\in H\backslash C$ and $0\in \mathrm{int} C_1$. Then
there exists a point $z_0\in D_1\cap H$ with
\[
\mu_{D_1} \left[(1-\lambda)x_0+\lambda z_0\right] =1~~~~\quad~~~\mathrm{ whenever}~~~~~\quad~~~~~ \lambda \in [0,1].
\]
such that there is a point $u_0 \in [0,y_0]$ so that  $u_0 \in \{ (1-\lambda)x_0+\lambda z_0: \lambda \geq 0    \}$.

\end{lemma}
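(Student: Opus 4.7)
The strategy is first to establish the convex-hull formula in part (1) by elementary computation, then to reduce parts (2) and (3) to classical two-dimensional geometry by intersecting everything with the subspace $H$.

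For part (1), I would set $S = \{\lambda x_0 + (1-\lambda)x : \lambda \in [0,1],\ x \in C\}$. The inclusion $S \subseteq \overline{co}(\{x_0\} \cup C)$ is immediate. For the converse it suffices to verify that $S$ is closed and convex; convexity is a direct re-parameterization of a convex combination of two elements of $S$, using the convexity of $[0,1]$. Closedness is obtained by passing to a subsequence along which $\lambda_n \to \lambda \in [0,1]$: when $\lambda < 1$ the corresponding sequence $(x_n) \subset C$ converges in the closed set $C$, and when $\lambda = 1$ the boundedness of $C$ (implicit in the intended application) together with $0 \in \mathrm{int}\,C$ forces the limit to equal $x_0$, which already belongs to $S$.

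For part (2), I would work entirely in the two-dimensional subspace $H$, where $C \cap H$ is a closed convex planar set with $0$ in its relative interior and $x_0 \in H \setminus C$. Two-dimensional Hahn--Banach yields a supporting line of $C \cap H$ passing through $x_0$; let $y_0$ be the point of intersection of this line with $C \cap H$ that is farthest from $x_0$. Then $[x_0, y_0]$ lies on this supporting line with $D \cap H$ entirely on one side, so $\mu_D \equiv 1$ on $[x_0, y_0]$. Conversely, a point on the line through $x_0$ and $y_0$ with parameter $\lambda \notin [0,1]$ lies outside $D$: for $\lambda > 1$ it is on the tangent line past $y_0$ (which is the furthest point of $C$ on this line), and for $\lambda < 0$ it lies beyond $x_0$, an extreme point of $D$ whose tangent cone does not contain the direction $x_0 - y_0$ since $x_0 \notin C$ and $C$ is closed. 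This yields the required equality.

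For part (3), I would again restrict to $H$ and choose $z_0$ as the farthest tangent point from $x_0$ to $C_1 \cap H$ on the same side as $y_0$, namely along the support line of $C_1 \cap H$ through $x_0$ whose associated closed half-plane contains $y_0$. Applying (2) to $D_1$ then gives $\mu_{D_1} \equiv 1$ on $[x_0, z_0]$. To locate $u_0$, I would observe that the inclusion $C_1 \subseteq C$ together with $0 \in \mathrm{int}\,C_1$ forces the shadow cone of $C_1$ at $x_0$ to be strictly contained in the shadow cone of $C$ at $x_0$; consequently the ray from $x_0$ through $z_0$, which bounds the smaller shadow on the $y_0$-side, lies strictly between the tangent ray from $x_0$ through $y_0$ and the ray from $x_0$ through $0$. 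This ray therefore enters the interior of the triangle with vertices $x_0, 0, y_0$ and must exit through the opposite side $[0, y_0]$; the exit point is the desired $u_0$. The main technical obstacle is this angular comparison, which must be justified rigorously from $C_1 \subseteq C$ and $0 \in \mathrm{int}\,C_1$; once the strict angular ordering is in place, the triangle-exit conclusion is elementary plane geometry.
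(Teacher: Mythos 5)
Your proposal is correct in outline and agrees with the paper on parts (1) and (2): part (1) is the same convexity-plus-closedness computation (and you handle the $\lambda_n\to 1$ case more carefully than the paper does, which simply asserts that $\{x_n\}$ is Cauchy; boundedness of $C$, available in the intended application, is indeed what rescues that case), and part (2) is the same supporting-line argument, phrased by you as planar tangency and by the paper via a functional $x^{*}|_{H}$ on $H$. The genuine divergence is in part (3). The paper does \emph{not} take $z_0$ to be a tangency point of $C_1$: it picks $e_0\in[x_0,y_0]$ near $x_0$, uses $\mu_{D_1}(e_0)\ge 1$ to find $z_0\in[0,e_0]$ with $\mu_{D_1}(z_0)=1$, and then gets $z_0\in \mathrm{co}\{x_0,0,y_0\}$ for free from $z_0\in[0,e_0]$ and $e_0\in[x_0,y_0]$; writing $z_0=\lambda_1x_0+\lambda_2\cdot 0+\lambda_3y_0$, the point $u_0=\bigl(1-\tfrac{1}{1-\lambda_1}\bigr)x_0+\tfrac{1}{1-\lambda_1}z_0=\tfrac{\lambda_3}{\lambda_2+\lambda_3}y_0\in[0,y_0]$ is produced by an explicit barycentric computation. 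Your route buys a cleaner justification of $\mu_{D_1}\equiv 1$ on $[x_0,z_0]$ (it is literally part (2) applied to $C_1$, whereas the paper's "by the proof of (b), we can assume without loss of generality" for its $z_0$ is the murkiest step of its argument); the paper's route buys an immediate, computation-free placement of $z_0$ inside the triangle, avoiding your angular comparison entirely.

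On your acknowledged obstacle: the shadow-cone containment itself is immediate from $\{x_0+t(c-x_0):t\ge 0,\ c\in C_1\}\subseteq\{x_0+t(c-x_0):t\ge 0,\ c\in C\}$, but the containments and orderings should be non-strict, not strict as you state --- if $C_1$ and $C$ share the same tangent line from $x_0$ on the $y_0$ side, the ray through $z_0$ coincides with the ray through $y_0$ and $u_0=y_0$. With that adjustment the closed-sector version of your exit argument reduces to exactly the paper's barycentric identity ($z_0=(1-a-b)x_0+a\cdot 0+b\,y_0$ with $a,b\ge 0$, $a+b>0$, giving $u_0=\tfrac{b}{a+b}y_0$), so the gap is fillable and the proposal stands.
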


\begin{proof}
 Let $A= \{\lambda x_0+ (1-\lambda)x: \lambda\in [0,1],~ x\in C\}$. Then it is easy to see that $A\subset \overline{\mathrm{co}}(\{x_0\}\cup C)$.
Pick two points $y_1\in A$ and $y_2\in A$. Then there exist two  points $x_1\in C$ and $x_2\in C$ such that
$y_1=\lambda_1 x_0+ (1-\lambda_1)x_1$ and $y_2=\lambda_2 x_0+ (1-\lambda_2)x_2$, where $\lambda_1\in [0,1]$ and $\lambda_2\in [0,1]$. This implies that for every $t\in [0,1]$, we have
\[
\quad \quad \quad t y_1~+~ (1-t)y_2\quad\quad\quad\quad \quad \quad \quad  \quad\quad \quad\quad \quad\quad \quad\quad \quad\quad \quad\quad\quad\quad \quad\quad \quad\quad \quad\quad \quad\quad \quad\quad \quad\quad
\]
\[
\quad =~t \left(\lambda_1 x_0+ (1-\lambda_1)x_1\right)~+~(1-t)\left(\lambda_2 x_0+ (1-\lambda_2)x_2\right) \quad \quad\quad \quad\quad \quad\quad \quad\quad \quad\quad \quad\quad \quad\quad \quad\quad \quad\quad \quad\quad \quad\quad \quad
\]
\[
\quad =~ \left(t \lambda_1 +(1-t)\lambda_2\right)x_0 ~+~ t (1-\lambda_1)x_1    ~+~   (1-t)(1-\lambda_2)x_2\quad \quad\quad \quad\quad \quad\quad \quad\quad \quad\quad \quad\quad \quad\quad \quad\quad \quad\quad \quad\quad \quad\quad \quad
\]
\[
\quad =~ (t \lambda_1 +(1-t)\lambda_2)x_0 \quad\quad \quad\quad \quad\quad \quad\quad \quad\quad \quad\quad \quad\quad \quad\quad \quad\quad\quad\quad \quad\quad \quad\quad \quad\quad \quad\quad \quad\quad \quad\quad \quad\quad \quad\quad
\]
\[
\quad \quad+   [ 1-  (t \lambda_1 +(1-t)\lambda_2)   ]\left(\frac{ t (1-\lambda_1)}{1-  (t \lambda_1 +(1-t)\lambda_2)}          x_1    +  \frac{(1-t)(1-\lambda_2)}{1-  (t \lambda_1 +(1-t)\lambda_2)} x_2\right)\quad\quad \quad\quad \quad\quad \quad\quad \quad\quad \quad\quad \quad\quad \quad\quad \quad\quad
\]
\[
\quad \in~ (t \lambda_1 +(1-t)\lambda_2)x_0 ~+~ [ 1-  (t \lambda_1 +(1-t)\lambda_2)   ] C\subset A.\quad\quad \quad\quad \quad\quad \quad\quad \quad\quad \quad\quad \quad\quad \quad\quad \quad\quad\quad\quad \quad\quad \quad\quad \quad\quad \quad\quad \quad\quad \quad\quad \quad\quad \quad\quad
\]
Therefore, by $t \lambda_1 +(1-t)\lambda_2\in [0,1]$, we get that the set $A$ is convex.
We claim that the set $A$ is a closed convex subset of $X$. In fact, pick a point  $y_0\in \overline{A}$. Then, by  $y_0\in \overline{A}$,  there exists a sequence $\{\lambda_n x_0+ (1-\lambda_n)x_n\}_{n=1}^{\infty}\subset A$ such that
\[
\mathop {\lim }\limits_{n \to \infty } \left\|  \lambda_n x_0+ (1-\lambda_n)x_n-y_0                  \right\| =0.
\]
Hence we can assume without loss of generality that $\lambda_n \to \lambda_0\in [0,1]$. Then
\[
\mathop {\lim }\limits_{n \to \infty } \left\|  \lambda_0 x_0+ (1-\lambda_0)x_n-y_0                  \right\| =\mathop {\lim }\limits_{n \to \infty } \left\|  \lambda_n x_0+ (1-\lambda_n)x_n-y_0                  \right\| =0.
\]
This means that $\{x_n\}_{n=1}^{\infty}$ is a Cauchy sequence.  Since the set $C$ is a closed convex subset of $X$, we have
$x_n \to x_0\in C$. Hence we have $y_0 = \lambda_0 x_0+ (1-\lambda_0)x_0\in A$. This implies that  $A$ is a closed convex set.
Therefore, by $A\subset \overline{\mathrm{co}}\left(\{x_0\}\cup C\right)$ and $A\supset \left(\{x_0\}\cup C\right)$, we get that $A= \overline{\mathrm{co}}\left(\{x_0\}\cup C\right)$.

\par (b) Since $x_0\not\in C$, we obtain that $\mu_{D}(x_0)=1 $ and $\mu_{C}(x_0)>1 $.
Moreover, since $0\in \mathrm{int} C$, we obtain that the Minkowski functionals
$\mu_{C}$ and $\mu_{D}$ are continuous on $X$. Therefore, by
$\mu_{D}(x_0)=1 $ and $\mu_{C}(x_0)>1 $,
there exists a point $y_0\in H$ with $\mu_{D}\left(y_0\right)=1 $ such that
$\mu_{C}(y_0)>1 $ and $y_0 \not\in$ $  \{\lambda x_0: \lambda\in R\}$. Hence $y_0\not\in C$. Then, by
$y_0\not\in C$ and $\mu_{D}\left(y_0\right)=1 $,
there exists a point $z_0$ $\in C$ such that
$y_0\in (x_0, z_0)= \{\lambda x_0+(1-\lambda)z_0: \lambda\in (0,1)\}$.
Since $\mu_{D}$ is a continuous Minkowski functional, by $1=\mu_{D}\left(y_0\right)= \mu_{D}\left(x_0\right)\geq \mu_{D}\left(z_0\right)$
and $y_0\in (x_0, z_0)$, it is easy to see that
\[
\mu_{D}\left(x\right)\equiv 1~~\quad~~~~~\mathrm{whenever}~~~~~~\quad~~~x\in \left[x_0, z_0\right]=\{\lambda x_0+(1-\lambda)z_0: \lambda\in [0,1]\}.
\]
Since the space $H$ is a two-dimensional subspace of $X$,  by $x_0\in H\backslash C$ and $y_0 \not\in \left\{\lambda x_0: \lambda\in R\right\}$,
there exists a functional $x^{*}|_{H}\in H^{*} \backslash\{0\}$
such that
\[
L=\{x\in H: x^{*}|_{H}(x)=x^{*}|_{H}(x_0) =x^{*}|_{H}(y_0)    \}=\{ (1-\lambda)x_0+\lambda y_0:  \lambda\in R       \}.
\]
Therefore, by $y_0 \not \in \{\lambda x_0: \lambda\in R\}$, we have $0 \not \in \{ (1-\lambda)x_0+\lambda y_0:  \lambda\in R       \}$.
Hence we can assume without loss of generality  that $x^{*}|_{H}(x_0)>0$.
Pick a point $x\in H$ such that $x^{*}|_{H}(x)>x^{*}|_{H}(x_0)$. We claim that $x\notin D$. In fact, suppose that $x\in D$.
Then there exists a real number $\lambda_0\in (0,1)$ such that $x^{*}|_{H}\left(\lambda_0 x\right)=x^{*}|_{H}\left(x_0\right)$ and
$\lambda_0 x \in D$. Since $\mu_{D}(x_0)  =\mu_{D}(y_0)=1 $  and $\lambda_0 x \in D$, by
\[
\lambda_0 x \in \{\lambda x_0+(1-\lambda)z_0: \lambda\in R\},
\]
we have $\mu_{D}(x)>\mu_{D}(\lambda_0 x)= 1$, a contradiction. Hence we get that $x\notin D$. Then
\[
x^{*}|_{H}\left(x_0\right)= x^{*}|_{H}\left(y_0\right)=\sup \left\{ x^{*}|_{H}(x):   x\in H \cap D                 \right\}.
\]
Since $H$ is a two-dimensional subspace of $X$, by the above formula, there exists a point $y_{0,1}\in D\cap H$ such that (For convenience, $y_{0,1}$ is still referred to as $y_{0}$)
\[
[x_0,y_{0}]= \{x\in X: \mu_{D}(x)=1\}\cap \{ (1-\lambda)x_0+\lambda y_{0}:  \lambda\in R       \}.
\]

\par (c) We pick a point $e_0\in[x_0,y_0]$ such that $e_0\in \left\{ \left(1-\lambda\right)x_0+\lambda y_0:  \lambda\in \left(0,1\right)     \right  \}$ as long as $e_0$ is sufficiently close to $x_0$.
Therefore, from the proof of (b), we obtain that $ \mu_{D}(e_0)=1 $. Since $C_1$ is a  closed convex subset of $C$, by $D_1=\overline{\mathrm{co}}\left(\{x_0\}\cup C_1\right)$,
we get that $D_1$ is a  closed convex subset of $D$. Then, by $ \mu_{D}\left(e_0\right)=1 $, we obtain $ \mu_{D_1}(e_0)\geq 1 $.
Hence there exists a point $z_0  \in \{\lambda e_0: \lambda \in [0,1] \}$ so that
$\mu_{D_1}(z_0)=1$. Therefore, by the proof of (b) and $\mu_{D_1}(x_0)=\mu_{D_1}(z_0)=1$, we obtain that
\[
\mu_{D_1} \left[\left(1-\lambda\right)x_0+\lambda z_0\right] \equiv 1~~\quad~~~\mathrm{ whenever}~~~~~\quad~~~~\lambda \in [0,1]
\]
as long as $e_0$ is sufficiently close to $x_0$. Moreover, since $e_0\in[x_0,y_0]$ and  $z_0  \in \{\lambda e_0: \lambda \in [0,1] \}$, it is easy to that $z_0  \in \mathrm{co} \left\{ x_0, y_0,0 \right\}$.
Then, by $z_0  \in \mathrm{co} \left\{ x_0, y_0,0 \right\}$, there exists a set $\{\lambda_{1}, \lambda_{2}, \lambda_{3}\}\subset [0,1]$
with $\lambda_{1}+ \lambda_{2}+\lambda_{3} =1$
so that $z_0= \lambda_{1} x_0 + \lambda_{2} 0 + \lambda_{3} y_0$.
Pick a point
\[
u_0= \left( 1- \frac{1}{1-\lambda_{1}}     \right)x_0 + \frac{1}{1-\lambda_{1}}  z_0 \in \left\{ (1-\lambda)x_0+\lambda z_0: \lambda \geq 0  \right  \}.
\]
Since $\{\lambda_{1}, \lambda_{2}, \lambda_{3}\}\subset [0,1]$, $\lambda_{1}+ \lambda_{2}+\lambda_{3} =1$ and $z_0= \lambda_{1} x_0 + \lambda_{2} 0 + \lambda_{3} y_0$, we have
\[
u_0 ~=~  \left( 1- \frac{1}{1-\lambda_{1}}     \right)x_0 + \frac{1}{1-\lambda_{1}}  z_0\quad \quad\quad \quad \quad \quad \quad
\]
\[
\quad \quad \quad \quad \quad \quad \quad \quad  =~ \left( 1- \frac{1}{1-\lambda_{1}}     \right)x_0  ~+~ \frac{1}{1-\lambda_{1}}  \left( \lambda_{1} x_0 + \lambda_{2} 0 + \lambda_{3} y_0 \right)\quad \quad \quad \quad \quad \quad \quad \quad \quad \quad\quad \quad \quad \quad \quad \quad \quad \quad \quad \quad \quad \quad \quad \quad \quad \quad \quad \quad \quad \quad
\]
\[
\quad \quad \quad \quad \quad \quad \quad \quad =~ \frac{1}{1-\lambda_{1}}  \left(  \lambda_{3} y_0 \right)= \frac{\lambda_{3} }{\lambda_{2} +\lambda_{3} }  y_0\in \left [0,y_0 \right]. \quad \quad \quad \quad \quad \quad \quad \quad \quad \quad \quad \quad \quad \quad \quad \quad \quad \quad \quad \quad\quad \quad \quad \quad \quad \quad \quad \quad \quad \quad
\]
Therefore, by $u_0\in \left\{ (1-\lambda)x_0+\lambda z_0: \lambda \geq 0  \right  \} $,  we get that the Lemma 2.4 is true, which completes the proof.
\end{proof}

\begin{lemma}
Suppose that

\par (1) the space $X$ is a two-dimensional Banach space;
\par (2) $L= \left\{ (1-\xi) x_0  + \xi y_0 \in X:  \xi \geq 0  \right\}$ and $H= \left\{ \xi x_0\in X : \xi\leq 1\right\}$, where $x_0\neq 0$ and $y_0\not\in \left\{ \xi x_0\in X : \xi\in R\right\}$;
\par (3) $z_0\not\in \overline{\mathrm{co}}\left(L\cup H\right)$, $x^{*}(z_0)>0$, $x^{*}(y_0)>0$ and
$x^*(x_0)=0$.
\\
Then there exists a point $\xi_0 \in [ 0 ,+ \infty)$ such that $(1-\xi_{0}) x_0  + \xi_{0} y_0 \in [0,z_0]$.
\end{lemma}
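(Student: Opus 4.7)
The plan is to exploit $\dim X = 2$ by working in the basis $\{x_0, y_0\}$. Since $x_0 \in N(x^*)$ while $x^*(y_0) > 0$, these two vectors are linearly independent and so form a basis of $X$. I would write $z_0 = \alpha x_0 + \beta y_0$ in this basis and apply $x^*$ to both sides to deduce $\beta = x^*(z_0)/x^*(y_0) > 0$.

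The crux is to convert the geometric condition $z_0 \notin \overline{\mathrm{co}}(L \cup H)$ into the coordinate inequality $\alpha + \beta > 1$. To that end, I would introduce the linear functional $\psi \in X^*$ defined by $\psi(\alpha' x_0 + \beta' y_0) = \alpha' + \beta'$ and observe that $\psi$ is identically $1$ on $L$ (directly from the parametrization) while $\psi \leq 1$ on $H$. Together with $x^*(L) \subseteq [0, \infty)$ and $x^*(H) = \{0\}$, this gives $L \cup H \subseteq W := \{w \in X : \psi(w) \leq 1,\ x^*(w) \geq 0\}$, and since $W$ is closed and convex, $\overline{\mathrm{co}}(L \cup H) \subseteq W$. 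For the reverse inclusion I would write any $w = \alpha' x_0 + \beta' y_0 \in W$ explicitly as
\[
w = \tfrac{1}{2}\bigl[(2\alpha' + 2\beta' - 1) x_0\bigr] + \tfrac{1}{2}\bigl[(1 - 2\beta') x_0 + 2\beta' y_0\bigr],
\]
noting that the first summand lies in $H$ because $2\alpha' + 2\beta' - 1 \leq 1$, while the second lies in $L$ because $2\beta' \geq 0$. Hence $\overline{\mathrm{co}}(L \cup H) = W$, so $z_0 \notin \overline{\mathrm{co}}(L \cup H)$ combined with $x^*(z_0) > 0$ forces $\psi(z_0) = \alpha + \beta > 1$.

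To finish, I would set $t_0 = 1/(\alpha + \beta) \in (0, 1)$ and $\xi_0 = \beta/(\alpha + \beta) \in (0, +\infty)$, and verify by a direct basis computation that $t_0 z_0 = t_0 \alpha x_0 + t_0 \beta y_0 = (1 - \xi_0) x_0 + \xi_0 y_0$. This places $(1 - \xi_0) x_0 + \xi_0 y_0$ simultaneously on $L$ (with parameter $\xi_0$) and on the segment $[0, z_0]$ (at $t_0$), completing the proof. The main obstacle is the characterization $\overline{\mathrm{co}}(L \cup H) = W$: the $\subseteq$ direction is immediate, while $\supseteq$ requires producing the explicit convex combination above and checking its side conditions.
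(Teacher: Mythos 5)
Your proof is correct and takes essentially the same route as the paper's: your functional $\psi$ is precisely the paper's $y^{*}$ (the functional equal to $1$ on the affine line through $x_0$ and $y_0$), and both arguments characterize $\overline{\mathrm{co}}(L\cup H)$ as $\{x: x^{*}(x)\geq 0\}\cap\{x:\psi(x)\leq 1\}$, deduce $\psi(z_0)>1$, and then locate the point of $[0,z_0]$ on the line $\{\psi=1\}$ and check its parameter $\xi_0$ is nonnegative via $x^{*}(y_0)>0$. The only difference is in presentation: your explicit convex combination supplies the verification that the paper dismisses as ``easy to see,'' and your coordinate formula for $\xi_0$ replaces the paper's intermediate-value argument.
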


\begin{proof}
Since the space $X$ is a two-dimensional Banach space, by $L= \{ (1-\xi) x_0  + \xi y_0 \in X:  \xi \geq 0  \}$, there exists a functional $y^{*}\in X^{*}\backslash \{0\}$ such that
\[
y^{*}(z)\equiv 1~~\quad~~~~\mathrm{whenever}~~~~~~\quad~~z\in L_0= \left\{\left (1-\xi\right) x_0  + \xi y_0 \in X:  \xi \in R \right\}.
\]
Then $y^{*}\left(z\right)\leq 1$ whenever $z\in H$. Since $x^{*}\left(y_0\right)>0$ and $x^*(x_0)=0$, we get that $x^{*}(z)\geq 0$ whenever $z\in L$.
Moreover, we have $x^{*}(z)= 0$ whenever $z\in H$. Since $X$ is a two-dimensional Banach space,
it is easy to see that
\[
\overline{\mathrm{co}}\left(L\cup H\right)= \{x\in X:x^{*}(x)\geq 0 \} \cap \{x\in X:y^{*}(x)\leq 1 \}.
\]
Therefore, by the formulas $z_0\not\in \overline{\mathrm{co}}\left(L\cup H\right)$ and $x^{*}(z_0)>0$, we get that $ y^{*}(z_0)>1 $.
Since $y^{*}(z)=1$ for all $z\in L_0$, by $ y^{*}(0)=0 $ and $ y^{*}(z_0)>1 $, there exists a point
\[
u_0\in L_0=\left\{ \left(1-\xi\right) x_0  + \xi y_0 \in X:  \xi \in R \right\}
\]
so that $u_0\in[0,z_0]$. Therefore, by $\langle x^{*}, z_0\rangle >0$ and $u_0\in[0,z_0]$, we have $\langle x^{*}, u_0\rangle\geq 0$.
Let $u_0=\left(1-\xi_{0}\right) x_0  + \xi_{0} y_0 $. Then, by the formulas $\langle x^{*}, u_0\rangle \geq 0$ and $\left\langle x^*, x_0\right\rangle=0$, we have the following inequalities
\[
\left\langle x^{*}, u_0 \right\rangle= \left\langle x^{*} ,    \left(1-\xi_{0}\right) x_0  + \xi_{0} y_0     \right\rangle =\xi_{0} \left\langle  x^{*} , y_0 \right\rangle\geq 0.
\]
Therefore, by $x^{*}(y_0)>0$, we have $\xi_{0} \geq 0$. Hence
there exists a point $\xi_0 \in [ 0,+\infty) $ such that $u_0=(1-\xi_{0}) x_0  + \xi_{0} y_0 $ $\in [0,z_0]$, which completes the proof.
\end{proof}

\begin{lemma}
Suppose that

\par (1) $T: X\rightarrow2^{X^{*}}$ is a maximal monotone operator and the set $D=\mathrm{int} D(T)= \mathrm{int} \{x\in X: T(x)\neq \emptyset \}$ is nonempty.
\par (2) the set $C$ is a bounded closed convex set of $X$ and $0\in \mathrm{int}C$;
\par (3) there exists two point $x_0\in X$ and $e_0\in X$ with $\mu_{C}(e_0)=1$ such that $e_0 T$ is single-valued at the point $x_0\in X$;
\par (4) $e_0 T(x_0) =\alpha$ and $\sup \left\{ \sigma_{T}\left(x_0, e\right) : \mu_{C}(e)=1   \right\}\leq \alpha$.
\\
Then $T(x_0)\subset \alpha\cdot \partial \mu_{C}(e_0)$. Moreover, if the Minkowski functional $\mu_{C}$ is G$\hat{a}$teaux differentiable at the point $e_0\in X$, then the set
$T(x_0)$ is a singleton.
\end{lemma}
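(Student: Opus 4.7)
The plan is to read hypotheses (3) and (4) as saying that the sublinear functional $\alpha\mu_{C}$ dominates $\sigma_{T}(x_{0},\cdot)$ with equality at $e_{0}$; once this is in hand, every $x^{*}\in T(x_{0})$ will be forced to lie in $\alpha\,\partial\mu_{C}(e_{0})$ by the standard characterization of the subdifferential of a sublinear functional, and the moreover clause will then follow from the fact that G$\mathrm{\hat{a}}$teaux differentiability of $\mu_{C}$ at $e_{0}$ makes $\partial\mu_{C}(e_{0})$ a singleton.

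First I would fix an arbitrary $x^{*}\in T(x_{0})$. By hypothesis (3) combined with $e_{0}T(x_{0})=\alpha$ from hypothesis (4), every element of $T(x_{0})$ pairs with $e_{0}$ to give the scalar $\alpha$, so $\langle x^{*},e_{0}\rangle=\alpha$. Next, Lemma 1.6(1) provides the positive homogeneity of $\sigma_{T}(x_{0},\cdot)$; since $0\in\mathrm{int}\,C$ and $C$ is bounded, one has $\mu_{C}(e)>0$ for every $e\neq 0$, so scaling gives
\[
\langle x^{*},e\rangle\;\le\;\sigma_{T}(x_{0},e)\;=\;\mu_{C}(e)\,\sigma_{T}\!\left(x_{0},\frac{e}{\mu_{C}(e)}\right)\;\le\;\alpha\mu_{C}(e)
\]
by hypothesis (4). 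The case $e=0$ is trivial, so this inequality is valid on all of $X$.

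Evaluating the inequality at $-e_{0}$ together with $\langle x^{*},e_{0}\rangle=\alpha$ forces $\alpha\ge 0$. If $\alpha>0$, then $y^{*}:=x^{*}/\alpha$ satisfies $\langle y^{*},e\rangle\le\mu_{C}(e)$ for every $e\in X$ and $\langle y^{*},e_{0}\rangle=1=\mu_{C}(e_{0})$, which is precisely the characterization of $y^{*}\in\partial\mu_{C}(e_{0})$; hence $x^{*}\in\alpha\,\partial\mu_{C}(e_{0})$. If $\alpha=0$, applying the inequality to both $e$ and $-e$ forces $x^{*}=0$, and the inclusion is trivial. For the moreover clause, G$\mathrm{\hat{a}}$teaux differentiability of $\mu_{C}$ at $e_{0}$ reduces $\partial\mu_{C}(e_{0})$ to a single functional $d\mu_{C}(e_{0})$, so $\alpha\,\partial\mu_{C}(e_{0})$ is a singleton; since $T(x_{0})$ is nonempty (the expression $e_{0}T(x_{0})=\alpha$ requires it) and is contained in this singleton, it too is a singleton. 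I expect no serious obstacle; the only details requiring attention are the scaling step and the degenerate case $\alpha=0$.
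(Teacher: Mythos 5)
Your proof is correct and follows essentially the same route as the paper's: take $x^{*}\in T(x_{0})$, use (4) to dominate $\langle x^{*},\cdot\rangle$ by $\alpha\mu_{C}$, use (3) to get equality at $e_{0}$, and conclude $x^{*}\in\alpha\,\partial\mu_{C}(e_{0})$, with the singleton claim following from G$\mathrm{\hat{a}}$teaux differentiability. You simply fill in details the paper leaves implicit (the scaling step, $\alpha\ge 0$, and the degenerate case $\alpha=0$), which is a welcome tightening rather than a different argument.
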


\begin{proof}
Suppose that $x^{*}\in T(x_0)$. Then, by the inequality  $\sup \{ \sigma_{T}\left(x_0, e\right) : \mu_{C}\left(e\right)=1   \}\leq \alpha$, we get that
$\langle x^{*},e \rangle \leq \alpha$ whenever $\mu_{C}(e)\leq 1$.  Since $e_0 T$ is single-valued at the point $x_0$ and $e_0 T(x_0) =\alpha$, by $x^{*}\in T(x_0)$,
we get that $\langle x^{*},e_0 \rangle = \alpha$. Hence we obtain that $x^{*}\in \alpha\cdot \partial \mu_{C}(e_0)$. This implies that
$T(x_0)\subset \alpha\cdot \partial \mu_{C}(e_0)$.
Hence, if $\mu_{C}$ is G$\mathrm{\hat{a}}$teaux differentiable at the point $e_0\in X$, then $T(x_0)$ is a singleton, which completes the proof.
\end{proof}

We next prove that Theorem 2.1.

\begin{proof}
\par We define the norm $p_1(x,y)= \|(x,y)\|=\max\left\{\|x\|, |y|\right\}$ on  $X\times R$.
Let $f$ be a continuous convex function on the space $X\times R$. Then we define the set
\[
G= \left\{ (x,y)\in X\times R : ~\mathrm{The}~~~~\mathrm{set}~~\partial f\left(x,y\right)~~\mathrm{is}~~~~\mathrm{a}~~~~\mathrm{singleton}    \right\}.
\]
It is well known that  $\partial f: X\times R\to 2^{X^{*}\times R}$ is   a maximal monotone operator.
We prove that $G$ must contain a dense $G_{\delta}$ subset by the Banach-Mazur game.
For clarity, we next will divide the proof into five steps.

\textbf{Step 1.} To use the Banach-Mazur game, we know that $X\times R$ is a completely metrizable space and is a Hausdorff space. Moreover, we know that $G$ is a subset of $X\times R$. Let $U_{1}$ be an open subset of $X\times R$.
Define the two sets
\[
S_{1}\left(X\times R\right)=\left\{(x, y): p_1(x,y)= 1\right\}~~\quad~~~~~\mathrm{and}~~~~~~\quad~~~B_{1}\left(X\times R\right)=\left\{(x, y) : p_1(x,y)\leq 1\right\}.
\]
Then the player $A$ choose an open subset $U_{1}$ of $X\times R$. Moreover
we can assume without loss of generality that $\partial f\left(U_{1}\right) \subset B_{1}\left(X^{*}\times R\right)$. Then we can assume  that
\[
\sup \left\{  \|(x^{*},y^*)\| \in R:   (x^{*},y^*)\in \partial f\left(U_{1}\right)      \right\}>0.
\]
(Otherwise, we have $\sup \left\{  \|(x^{*},y^*)\| :   (x^{*},y^*)\in \partial f\left(U_{1}\right)      \right\}=0$. Hence we get that $\partial f(x,y)$ is a singleton for every $(x,y)\in X\times R$.
Then $U_{1}\subset G$. Hence the player $B$ choose $V_k=U_{k}$ for every $k\in N$. Then $\cap_{k=1}^{\infty} V_k\subset G$.
This implies that Theorem 2.1 is true.)
Therefore, by $\partial f\left(U_{1}\right) \subset B_{1}\left(X^{*}\times R\right)$ and Lemma 1.8, we have
\begin{eqnarray*}
s_{1}&=&\sup\left\{ \sigma_{\partial f}\left ((x,y),(e_{X},e)\right): ((x,y),(e_{X},e))\in U_{1}\times S_{1}\left(X\times R\right) \right\}
\\
&=&\sup \left\{  \|(x^{*},y^*)\| \in R:   (x^{*},y^*)\in \partial f\left(U_{1}\right)      \right\}>0.
\end{eqnarray*}
Since $s_{1}>0$, we may assume without loss of generality that $s_{1}=1$.
Therefore, by the Lemma 1.8, we obtain that for every $(e_{X},e)\neq (0,0)$, $(x,y)\in U_{1}$ and $\eta>0$, there exists  a real number $t\in (0,\eta)$ such that
\[
(x,y)+t\left(e_{X},e\right)\in U_{1}, \quad \sigma_{\partial f} \left((x,y),(e_{X},e)\right)\leq\sigma_{\partial f} \left((x,y)+t\left(e_{X},e\right)\right)
\]
and $(e_{X},e)(\partial f)$ is single-valued at
the point $(x,y)+t\left(e_{X},e\right)$. Then we get that
\begin{eqnarray*}
s_{1}&=&\sup \left\{ \sigma_{\partial f}\left ((x,y),(e_{X},e)\right): ((x,y),(e_{X},e))\in U_{1}\times S_{1}\left(X\times R\right)~~~\mathrm{and}~~\right.
\\
&&\quad \quad \left. (e_{X},e)(\partial f) ~~\mathrm{is}~~~~\mathrm{ a }~~\mathrm{ single}~~\mathrm{valued } ~~\mathrm{ mapping} \right\}.
\end{eqnarray*}
Define the mapping $T:X\times R \to X $ such that $T(x,y)=x$. Since the space $X$ is  a weak Asplund space and $p_1(x,y)= \max\left\{\|x\|, |y|\right\}$, by Lemma 2.2 and Lemma 2.3,
there exists a dense open cone sequence $\{O_{n}^{1}\}_{n=1}^{\infty}$ of $G_1$
so that $p_1$  is G$\mathrm{\hat{a}}$teaux differentiable on the set $\cap_{n=1}^{\infty}O_{n}^{1}$ and $O_{n+1}^{1}\subset O_{n}^{1}$, where
\[
G_1= \{  \lambda \left(x, \eta \right) :   x\in T \left( B_1\left(X\times R\right) \right),~  \lambda\in (0 ,+\infty) ,~~ \eta=\pm 1  \}.
\]
Pick $\varepsilon_{1}\in \left(0,1/512^{6}\right)$ such that $\prod_{i=0}^{\infty}\left[1-(20\varepsilon_{1}/128^{i})\right]>3/4$.
Since $O_{1}^{1}$ is a dense open subset of $G_1$,
there exists a point $\left((x_{1},y_{1}),(e_{X,1},e_{1})\right)\in U_{1} \times S_{1}\left(X\times R\right)$ with
\[
(e_{X,1},e_{1})\in \mathop {\cap}\limits_{n=1}^{\infty}O_{n}^{1}~~~\quad~~~~~~~\mathrm{and}~~~~~~~~~\quad~~~T\left(e_{X,1},e_{1}\right)\in \mathrm{int} T\left\{ (x,y)\in X\times R: p_1 (x,y) \leq 1    \right\}
\]
such that the mapping $(e_{X,1},e_{1})(\partial f)$ is single-valued at the point $(x_{1},y_{1})$ and
\[
\sigma_{\partial f}\left((x_{1},y_{1}),(e_{X,1},e_{1})\right) >\left(1-\frac{1}{16^{2}}\varepsilon_{1}\right)s_{1}>0.
\]
Since $(e_{X,1},e_{1})(\partial f)$ is single-valued at the point $(x_{1},y_{1})$, by Lemma 1.8, we get that
$(x,y) \to \sigma_{\partial f}\left((x,y),(e_{X,1},e_{1})\right)$ is continuous at the point $(x_{1},y_{1})$.
Since the set $U_{1}$ is an open set, there exists a real number $r_1\in (0,1)$ such that
\[
B\left((x_{1},y_{1}),2r_1\right)\subset U_{1}~\quad~~~~~\mathrm{and}~~~~~~~~\quad~\sigma_{\partial f}\left((x,y),(e_{X,1},e_{1})\right)>\left(1-\frac{1}{16^{2}}\varepsilon_{1}^{4}\right)s_{1}>0~\eqno~~~~~~~~~~~~~~~~~~~~~~~~~~(2.5)
\]
for all $(x,y)\in B((x_{1},y_{1}),2r_1)$.
Define the set $V_1=\mathrm{int}B((x_{1},y_{1}),r_1)$ and for every $(x,y)\in X\times R$.
Then player $A$ may choose
any nonempty open subset $U_2 \subset V_1$. From the previous proof,
we can assume without loss of generality that
\[
\sup \left\{  \|(x^{*},y^*)\| \in R:   (x^{*},y^*)\in \partial f\left(U_{2}\right)      \right\}>0.
\]
Therefore, by $(e_{X,1},e_{1})\in O_{1}^{1}$, there exists a real number $\eta_1\in \left(0,\varepsilon_1^{4}/512^{6}\right)$ so that
$B((e_{X,1},e_{1}),256\eta_1)\subset O_{1}^{1}$.
Moreover, we pick
a real number $h_{1}\in \left(32/\left(\eta_1^{3}\varepsilon_{1}^{6}\right),+\infty\right)$ and we define the set $C_1$ of $X\times R$,   where
\[
C_1=\left\{  \left(\alpha e_{X,1},\alpha e_{1}\right):  0\leq \alpha  \leq   h_{1}            \right\} \cup \left\{ (x,y):  p_1 (x,y)=\max \left\{\|x\|, |y|\right\}\leq 1       \right\}.
\]
Therefore, by the definition of $C_1$, we define the non-Minkowski functional $\mu_{C_1}$,  where
\[
\mu_{ C_1}(x,y)= \inf \left\{  \lambda\in R^{+} :  \frac{1}{\lambda} \left(x,y\right)\in   C_1          \right\}
\]
for every  $(x,y)\in X\times R$. Then we get that $\mu_{C_1}$ is a noncontinuous functional and $\mu_{ C_1}(x,y)\in [0,+\infty)$ for every  $(x,y)\in X\times R$. Moreover, we have
$\mu_{ C_1}(x,y)=0$ if and only if $(x,y)=(0,0)$.
Define the functional $p_{2}'$,  where
\[
p_{2}'\left(x,y\right)=p_1\left(x,y\right)+16\varepsilon_{1} \cdot\mu_{ C_1}(x,y)~\quad~~~~~\mathrm{for}~~~~~~~~~\mathrm{every}~~~~~~~\quad~~(x,y)\in X\times R.
\]
Therefore, by $ p_1(e_{X,1},e_{1})=1$,
there exists a real number $\alpha_1\in (0,1)$ such that
\[
p_1\left(\alpha_1e_{X,1},\alpha_1e_{1}\right)+  16\varepsilon_{1} \cdot \mu_{C_1 }\left(\alpha_1e_{X,1},\alpha_1e_{1}\right)= p_{2}'\left(\alpha_1e_{X,1},\alpha_1e_{1}\right)=1.
\]
Since $h_{1}\in \left(32/\left(\eta_1^{3}\varepsilon_{1}^{6}\right),+\infty\right)$ and $\alpha_1\in (0,1)$, by the above formula, we get that
\[
\alpha_1=p_1\left(\alpha_1e_{X,1},\alpha_1e_{1}\right)= 1- 16 \varepsilon_{1}\cdot \mu_{C_1 }\left(\alpha_1e_{X,1},\alpha_1e_{1}\right)\geq 1-  \frac{1}{32}\eta_1\varepsilon_{1}^{2} .~~~\eqno~~~~~~~~~~~~~~~~~~~~~~~~~~(2.6)
\]
Moreover, we define the set $S'_{2}\left(X\times R\right)$ and a real number $s_2$, where
\[
S'_{2}\left(X\times R\right)=\left\{(x, y)\in X\times R: p_{2}'\left(x,y\right)= p_1(x,y)+16\varepsilon_1 \cdot \mu_{ C_1 }(x,y)=1\right\}
\]
and
\[
s_{2}=\sup\left\{ \sigma_{\partial f} \left((x,y),(e_{X},e)\right): ((x,y),(e_{X},e))\in U_{2}\times S'_{2}\left(X\times R\right) \right\}>0.
\]
Therefore, by the definition of $\sigma_{\partial f}$, we get that there exists a point
$
((x_{2},y_{2}),(h_{X,2},$ $h_{2}))\in U_{2} \times S_{2}'\left(X\times R\right)
$
such that the set-valued mapping $(h_{X,2},h_{2})(\partial f)$ is a single- valued mapping at the point $(x_{2},y_{2})\in X\times R$
and
\[
\sigma_{\partial f}\left((x_{2},y_{2}),(h_{X,2},h_{2})\right) >\left(1-\frac{1}{16^{2}}\varepsilon_{1}^{4}\right)s_{2}>0.
\]
Then, by the above inequality, there exists a real number $ r_2\in (0,r_1/4)$ such that
\[
\sigma_{\partial f}\left((x,y),(h_{X,2},h_{2})\right) >\left(1-\frac{1}{16^{2}}\varepsilon_{1}^{4}\right)s_{2}~~\quad~~~~~~~~~\mathrm{whenever}~~~~~~~~~\quad~~(x,y)\in B\left((x_{2},y_{2}),r_2\right)\subset U_2.
\]
We claim that $(h_{X,2},h_{2})\in \left\{ \left(\lambda e_{X,1},\lambda e_{1}\right): \lambda \in R\right\}$.
In fact, suppose that it is not true. Then we get that $(h_{X,2},h_{2})\notin \{(\lambda e_{X,1},\lambda e_{1}): \lambda \in R\}$. Hence we obtain that
$p_1(h_{X,2},h_{2})=\mu_{C_1}(h_{X,2},h_{2})$. Therefore, by the definition of $p_2'$, we get that
\[
1=p_2'\left(h_{X,2},h_{2}\right)=p_1\left(h_{X,2},h_{2}\right)+16\varepsilon_1 \cdot p_1\left(h_{X,2},h_{2}\right).
\]
Hence we get that $p_1\left(h_{X,2},h_{2}\right)=1/ \left(1+16\varepsilon_1\right)$. Therefore, by the definition of $s_1$ and the definition of $\sigma_{\partial f}$, we have the following inequalities
\[
\frac{s_{1}}{1+16\varepsilon_1} = \frac{1}{1+16\varepsilon_1}\sup\left\{ \sigma_{\partial f} ((x,y),(e_{X},e)): ((x,y),(e_{X},e))\in U_{1}\times S_{1}(X\times R) \right\} \quad \quad \quad \quad
\]
\[
\quad \quad \quad\quad = \sup\left\{ \sigma_{\partial f} ((x,y),(e_{X},e)): ((x,y),(e_{X},e))\in U_{1}\times \frac{1}{1+16\varepsilon_1}S_{1}(X\times R) \right\}\quad \quad \quad\quad \quad \quad\quad \quad \quad\quad \quad \quad
\]
\[
\quad \quad \quad\quad\geq \sigma_{\partial f}\left((x,y),(h_{X,2},h_{2})\right)\quad \quad \quad\quad\quad \quad \quad\quad\quad \quad \quad\quad\quad \quad \quad\quad\quad \quad \quad\quad\quad \quad \quad\quad\quad \quad \quad\quad\quad \quad \quad\quad\quad \quad \quad\quad
\]
for every $(x,y)\in B\left((x_{2},y_{2}),r_2\right)$. Then, by $\varepsilon_{1}\in \left(0,1/512^{6}\right)$ and $s_1=1$, we have
\[
\left(1-\frac{1}{16^{2}}\varepsilon_{1}^{4}\right) s_{1}\leq \sigma_{\partial f}\left((x,y),(h_{X,2},h_{2})\right)\leq \frac{1}{1+16\varepsilon_1}s_1< \left(1-\frac{1}{16^{2}}\varepsilon_{1}^{2}\right) s_{1}
\]
for each $(x,y)\in B((x_{2},y_{2}),r_2)$, this is a contradiction. Then we get that $(h_{X,2},h_{2})$ $\in \{ (\lambda e_{X,1},\lambda e_{1}): $ $\lambda \in R\}$.
Therefore, from the previous proof, we get that
\[
s_2=   \sup \left\{\sigma_{\partial f}\left((x,y),(\alpha_1 e_{X,1},\alpha_1 e_{1})\right):   (x,y)\in U_2    \right\}.
\]
Moreover,
we define the continuous Minkowski functional $\mu_{\overline{\mathrm{co}}(S'_{2}\left(X\times R)\right)}$, where
\[
\mu_{\overline{\mathrm{co}}\left(S'_{2}\left(X\times R\right)\right)}(x,y)= \inf \left\{ \lambda\in R^{+}: \frac{1}{\lambda} (x,y)\in  \overline{\mathrm{co}}\left(S'_{2}\left(X\times R\right)\right)                \right\}
\]
for every $(x,y)\in \overline{\mathrm{co}}\left(S'_{2}\left(X\times R\right)\right)$. Hence we define the set $S_{2}\left(X\times R\right)$, where
\[
S_{2}\left(X\times R\right)=\left\{(x, y)\in X\times R: \mu_{\overline{\mathrm{co}}\left(S'_{2}\left(X\times R\right)\right)}(x,y)=1\right\}.
\]
Therefore, by the definitions of $\mu_{\overline{\mathrm{co}}\left(S'_{2}\left(X\times R\right)\right)}$ and $p_2'$, it is easy to see that
\[
s_{2}=\sup\left\{ \sigma_{\partial f} ((x,y),(e_{X},e)): ((x,y),(e_{X},e))\in U_{2}\times S_{2}\left(X\times R\right) \right\}>0.
\]
Moreover, since the space $X$ is  a weak Asplund space,
by Lemma 2.2 and Lemma 2.3, there exists a dense open cone sequence $\{O_{n}^{2}\}_{n=1}^{\infty}$ of $G_2$
such that $\mu_{\overline{\mathrm{co}}\left(S'_{2}\left(X\times R\right)\right)}$ is G$\mathrm{\hat{a}}$teaux differentiable on set $\cap_{n=1}^{\infty}O_{n}^{2}$ and $O_{n+1}^{2}\subset O_{n}^{2}$, where
\begin{eqnarray*}
G_2 &=& \left\{   \lambda \left(x, f_2(x)\right) \in X\times R :   x\in T \left( \overline{\mathrm{co}}\left(S_{2}\left(X\times R\right)\right) \right),~  \lambda\in (0,+\infty)     \right\}
\\
&& \cup\left\{  \lambda \left(x, g_2(x)\right)\in X\times R :   x\in T \left( \overline{\mathrm{co}}\left(S_{2}\left(X\times R\right)\right) \right),~  \lambda\in (0,+\infty)      \right\},
\end{eqnarray*}
\[
f_{2}\left(x\right)= \inf \left\{ r\in R: (x,r) \in   \overline{\mathrm{co}}\left(S_{2}\left(X\times R\right)\right)         \right\}
\]
and
\[
g_{2}\left(x\right)= \sup \left\{ r\in R: (x,r) \in   \overline{\mathrm{co}}\left(S_{2}\left(X\times R\right)\right)         \right\}.
\]
Define $ p_2\left(x,y\right)=   \mu_{\overline{\mathrm{co}}(S'_{2}\left(X\times R\right))}\left(x,y\right)$ for every $(x,y)\in X\times R$. Then we
pick a real number $\varepsilon_2 = \varepsilon_1/128   $. Hence we obtain that $\left(1-16\varepsilon_1\right)\left(1-16\varepsilon_2\right)>3/4$.
Noticing that $O_{n}^{1} $ and $O_{n}^{2} $ are two dense open subsets of $G_2$, by $s_2=   \sup \{\sigma_{\partial f}((x,y),(\alpha_1 e_{X,1},$ $\alpha_1 e_{1})):   (x,y)\in U_2    \}$,
there exists a point $(e_{X,2},$ $e_{2})\in \left( \cap_{n=1}^{\infty}O_{n}^{2} \right) \cap\left( \cap_{n=1}^{\infty}O_{n}^{1} \right)
$ with
\[
p_2\left(e_{X,2},e_{2}\right)=1~~\quad~~~~~~~~~\mathrm{and}~~~~~~~~~~~\quad~~T\left(e_{X,2},e_{2}\right)\in \mathrm{int} T\left\{ (x,y)\in X\times R: p_2 \left(x,y\right) \leq 1  \right  \}
\]
such that (1) the set-valued mapping $(e_{X,2},e_{2})(\partial f)$ is a single-valued mapping at the point $(x_{2},y_{2})\in X\times R$;
(2) $p_2 \left((\alpha_1e_{X,1}, \alpha_1e_{1})- (e_{X,2},e_{2})  \right )    <\eta_1/50 $ and
\[
\sigma_{\partial f}\left((x,y),(e_{X,2},e_{2})\right)\geq \left(1-\frac{1}{16 }\varepsilon_{2}^{32}\right) s_{2}>0~~~~~\eqno~~~~~~~~~~~~~~~~~~~~~~~~~~(2.7)
\]
for each $(x,y)\in \mathrm{int}B\left((x_{2},y_{2}),r_2\right)$. Moreover, by the formula (2.6), we obtain that
$\alpha_1 \in [3/4,1]$.
Noticing that  $O_{1}^{1}$ is an open cone, by the inequality
$p_2  ((\alpha_1e_{X,1},\alpha_1 e_{1})$ $- (e_{X,2},e_{2}) )    <\eta_1/50  $ and
$B\left((e_{X,1},e_{1}),256\eta_1\right)\subset O_{1}^{1}$, we get that
\[
(e_{X,2},e_{2})\in B\left((\alpha_1 e_{X,1}, \alpha_1 e_{1}),\frac{1}{50}\eta_1\right)\subset B\left((\alpha_1 e_{X,1}, \alpha_1 e_{1}),25 \eta_1\right)            \subset O_{1}^{1}.
\]
Moreover, by the formula $(e_{X,2},e_{2})\in \left( \cap_{n=1}^{\infty}O_{n}^{2} \right) \cap\left( \cap_{n=1}^{\infty}O_{n}^{1} \right)\subset X\times R
$, we get that there exists a real number $\eta_2\in \left(0,\min \{\eta_1/128, \varepsilon_2/128 \} \right)$ such that
\[
 B\left ((e_{X,2},e_{2}),256\eta_2 \right)\subset O_{2}^{1}~~\quad~~~~~~~~~\mathrm{and}~~~~~~~~~~~\quad~~~~ B\left ((e_{X,2},e_{2}),256\eta_2 \right)\subset O_{2}^{2}.
\]
Let $V_2=\mathrm{int}B\left((x_{2},y_{2}),r_2\right) \subset U_2$. Then, by Lemma 1.8, we get that
the functional $(x,y) \to \sigma_{\partial f}\left((x,y),(e_{X,2},e_{2})\right)$ is continuous at the point $(x_{2},y_{2})\in X\times R$. Hence
we may assume without loss of generality that
\[
\sigma_{\partial f}\left((x_2,y_2),( e_{X,2}, e_{2})\right)> \sup \left\{  \sigma_{\partial f}\left((x,y),( e_{X,2}, e_{2})\right):  (x,y)\in V_2                  \right\}-\frac{1}{8}\varepsilon_2^{32}.~~~~\eqno~~~~~~~~~~~~~~~~~~~~~~~~~~(2.8)
\]
Since the set-valued mapping $\partial f$ is norm-to-weak$^{*}$ upper-semicontinuous and  the set-valued mapping $( e_{X,2}, e_{2})\partial f$ is single-valued at the point $(x_2,y_2)\in X\times R$, we can assume without loss of generality that
\[
\left\langle (x^{*},y^{*}), ( e_{X,2}, e_{2}) \right\rangle>\sigma_{\partial f}\left((x_2,y_2),( e_{X,2}, e_{2})\right)-  \frac{1}{8}\varepsilon_2^{32}
\]
for every $(u,v)\in V_2$ and $(x^{*},y^{*})\in \partial f(u,v)$. Then, by the formula (2.8), we have
\[
\left\langle (x^{*},y^{*}), ( e_{X,2}, e_{2}) \right\rangle > \sup \left\{  \sigma_{\partial f}\left((x,y),( e_{X,2}, e_{2})\right):  (x,y)\in V_2                 \right\}-\frac{1}{2}\varepsilon_2^{32}
\]
for every $(u,v)\in V_2$ and $(x^{*},y^{*})\in \partial f\left(u,v\right)$. Moreover,
the player $A$ may choose
any nonempty open subset $U_3 \subset V_2$. Therefore, from the previous proof,
we may assume without loss of generality that
\[
\sup \left\{  \|(x^{*},y^*)\|\in R :   (x^{*},y^*)\in \partial f\left(U_{3}\right)      \right\}>0.
\]

\textbf{Step 2.} From the proof of Step  1, it is well known that
$ p_2=   \mu_{\overline{\mathrm{co}}(S'_{2}\left(X\times R\right))}$ and
\[
\left\langle (x^{*},y^{*}), ( e_{X,2}, e_{2}) \right\rangle > \sup \left\{  \sigma_{\partial f}\left((x,y),( e_{X,2}, e_{2})\right):  (x,y)\in V_2                 \right\}-\frac{1}{2}\varepsilon_2^{32}
\]
for each $(u,v)\in V_2$ and $(x^{*},y^{*})\in \partial f(u,v)$. Therefore, by $U_3 \subset V_2$ and the above inequalities, we have the following inequality
\[
\left\langle (x^{*},y^{*}), ( e_{X,2}, e_{2}) \right\rangle > \sup \left\{  \sigma_{\partial f}\left((x,y),( e_{X,2}, e_{2})\right):  (x,y)\in U_3              \right\}-\frac{1}{2}\varepsilon_2^{32}
\]
for every $(u,v)\in U_3$ and $(x^{*},y^{*})\in \partial f(u,v)$. Hence
we
define the set $C_2$,   where
\[
C_2=\left\{ \left( \alpha e_{X,2}, \alpha e_{2}\right):  0\leq \alpha \leq  1+\varepsilon_2^{16}     \right\} \cup \left\{ (x,y)\in X\times R:    p_2\left(x,y\right)\leq \frac{1}{512^{3}}      \right\} .
\]
Therefore, by the definition of $C_2$, we define the   functional $\mu_{C_2}$, where
\[
\mu_{C_2 }\left(x,y\right)= \inf \left\{  \lambda\in R^{+} :  \frac{1}{\lambda}\left(x,y\right)\in   C_2           \right\}
\]
for every $(x,y)\in X\times R$. Moreover, we define the functional $p_3'$, where
\[
p_{3}'\left(x,y\right)=p_2\left(x,y\right)+16\varepsilon_{2} \cdot\mu_{ C_2}(x,y)~\quad~~~~~\mathrm{for}~~~~~~~~~\mathrm{every}~~~~~~~\quad~~(x,y)\in X\times R.
\]
Therefore, by $p_2\left(e_{X,2}, e_{2}\right)=1$, there exists a real number $\alpha_2\in (0,1) $ such that
\[
p_2\left(\alpha_2 e_{X,2},\alpha_2 e_{2}\right)+ 16 \varepsilon_{2}\cdot \mu_{C_2}\left(\alpha_2 e_{X,2},\alpha_2 e_{2}\right)= p_3'\left(\alpha_2 e_{X,2},\alpha_2 e_{2}\right)=1.
\]
Therefore, by the definition of $C_2$ and the above inequalities, we get that
\[
p_2\left(\alpha_2 e_{X,2},\alpha_2 e_{2}\right)+ 16 \varepsilon_{2}\cdot p_2\left(\alpha_2 e_{X,2},\alpha_2 e_{2}\right)\geq p_3'\left(\alpha_2 e_{X,2},\alpha_2 e_{2}\right)=1.
\]
Noticing that $\alpha_2\in (0,1) $ and $p_2\left(e_{X,2}, e_{2}\right)=1$, by the above inequalities,
we have
\[
1\geq\alpha_2=\alpha_2 p_2\left( e_{X,2}, e_{2}\right)    =p_2\left(\alpha_2 e_{X,2},\alpha_2 e_{2}\right)\geq\frac{1}{1+16\varepsilon_{2}}.
\]
Moreover, we define the set $S'_{3}\left(X\times R\right)$ and define a real number $s_{3}'$, where
\[
S'_{3}\left(X\times R\right)=\left\{(x, y)\in X\times R: p_{3}'\left(x,y\right)= p_2\left(x,y\right)+16\varepsilon_2 \cdot \mu_{ C_2 }\left(x,y\right)=1\right\}
\]
and
\[
s'_{3}=\sup\left\{ \sigma_{\partial f} \left((x,y),(e_{X},e)\right): ((x,y),(e_{X},e))\in U_{3}\times S'_{3}\left(X\times R\right) \right\}>0.
\]
We claim that $s_3'=\sup \left\{  \sigma_{\partial f}\left((x,y),( \alpha_2e_{X,2}, \alpha_2e_{2})\right):  (x,y)\in U_3              \right\}$ is true. In fact,
since $\alpha_{2}\geq (1+16\varepsilon_2)^{-1}$, $\eta_1\in \left(0,\varepsilon_1^{4}/512^{6}\right)$ and $p_2 \left( ( \alpha_1 e_{X,1}, \alpha_1 e_{1})-  ( e_{X,2}, e_{2})   \right  )<\eta_1/50$, by
$\partial f\left(U_{1}\right) \subset B_{1}\left(X^{*}\times R\right)$ and $\varepsilon_2= \varepsilon_1/ 128$, we obtain that
\[
\quad \quad ~\sup \left\{  \sigma_{\partial f}\left((x,y),( \alpha_2e_{X,2}, \alpha_2e_{2})\right):  (x,y)\in U_3              \right\}\quad \quad \quad \quad \quad\quad \quad \quad \quad \quad \quad \quad \quad \quad \quad \quad\quad \quad \quad \quad \quad \quad
\]
\[
=~\alpha_2\sup \left\{  \sigma_{\partial f}\left((x,y),( e_{X,2}, e_{2})\right):  (x,y)\in U_3              \right\}\quad \quad \quad \quad \quad\quad \quad \quad \quad \quad \quad \quad \quad \quad \quad \quad\quad \quad \quad \quad \quad \quad
\]
\[
\geq~ \alpha_2\sup \left\{  \sigma_{\partial f}\left((x,y),( \alpha_1 e_{X,1}, \alpha_1 e_{1})\right):  (x,y)\in U_3              \right\}~-~ \left\| ( \alpha_1 e_{X,1}, \alpha_1 e_{1})-  ( e_{X,2}, e_{2})     \right\|\quad \quad \quad \quad \quad \quad\quad \quad \quad \quad \quad \quad
\]
\[
\geq~ \frac{1}{1+16\varepsilon_2}s_2~-~\frac{1}{50}\eta_1~\geq     \frac{1}{1+16\varepsilon_2}s_2~-~\frac{1}{50}\varepsilon_2^{2} ~>      \frac{1}{1+16 (512)^{3}\varepsilon_2}s_2. \quad \quad \quad \quad \quad \quad \quad \quad \quad \quad \quad \quad\quad \quad \quad \quad \quad \quad \quad \quad \quad \quad \quad \quad
\]
Pick a point $(e_X,e) \in X\times R$ with $(e_X,e) \neq  ( \alpha_2e_{X,2}, \alpha_2e_{2})$ such that $p_3'(e_X,e)=1$.
Then $\mu_{ C_2 }(e_X,e)= (512)^{3} p_2(e_X,e)$. Therefore, by the definition of $p_3'$, we get that
\begin{eqnarray*}
1= p_3'\left(e_X,e\right)&=&p_2\left(e_X,e\right) + 16\varepsilon_2 \cdot \mu_{ C_2 }(e_X,e)
\\
&=& p_2\left(e_X,e\right) +16 \left(512\right)^{3}\varepsilon_2\cdot  p_2\left(e_X,e\right).
\end{eqnarray*}
This implies that $ p_2\left(e_X,e\right)= (1+16 (512)^{3}\cdot\varepsilon_2)^{-1}$. Therefore, by the definition of $s_2$ and $ p_2\left(e_X,e\right)= \left(1+16 (512)^{3}\cdot\varepsilon_2\right)^{-1}$, we get that
\begin{eqnarray*}
\frac{1}{1+16 (512)^{3}\varepsilon_2}s_2
\geq \sup \left\{  \sigma_{\partial f}\left((x,y),( e_{X}, e)\right):  (x,y)\in U_3              \right\}.
\end{eqnarray*}
Therefore, by the arbitrariness of $(e_X,e)$ and the above inequality,
we get that
\[
s_3'=\sup \left\{  \sigma_{\partial f}\left((x,y),( \alpha_2e_{X,2}, \alpha_2e_{2})\right):  (x,y)\in U_3              \right\}
\]
holds.
Moreover, we define the closed set $D_2'$, where
\[
D_2'=\left\{ \left( \alpha e_{X,2}, \alpha e_{2}\right):  0\leq \alpha \leq  1+\varepsilon_2^{16}     \right\} \cup \left\{ (x,y)\in X\times R:    p_2\left(x,y\right)\leq 1    \right\} .
\]
Therefore, by the definition of $D_2'$, we define the  functional $\mu_{D_2'}$, where
\[
\mu_{D_2'}\left(x,y\right)= \inf \left\{  \lambda\in R^{+} :  \frac{1}{\lambda}\left(x,y\right)\in  D_2'           \right\}
\]
for every $(x,y)\in X\times R$.
Moreover, we define the functional $p_3''$, where
\[
p_{3}''\left(x,y\right)=p_2\left(x,y\right)+16\varepsilon_{2} \cdot\mu_{D_2'}(x,y)~\quad~~~~~\mathrm{for}~~~~~~~~~\mathrm{every}~~~~~~~\quad~~(x,y)\in X\times R.
\]
Define the closed convex set $D_2= \overline{\mathrm{co}}\left\{ (e_{X},e): p_3'' (e_{X},e)=1        \right\}$ and the Minkowski functional $\mu_{D_2}$.
Let $A_2=  \overline{\mathrm{co}}\left(C_2\right)$. Then we get that $A_2$ is a closed convex subset of $X\times R$. Hence
 we define the Minkowski functional $\mu_{A_2 }$. Define the two sets
\[
S_{3}\left(X\times R\right)=\left\{(x, y)\in X\times R: p_2\left(x,y\right)+16\varepsilon_2 \cdot \mu_{A_2 }(x,y)=1\right\},
\]
\[
B_{3}\left(X\times R\right)=\left\{(x, y)\in X\times R: p_2\left(x,y\right)+16\varepsilon_2 \cdot \mu_{A_2 }(x,y)\leq 1\right\}.
\]
Then
we define a real number $s_{3}>0$, where
\[
s_{3}=\sup\left\{ \sigma_{\partial f}\left ((x,y),(e_{X},e)\right): \left((x,y),(e_{X},e)\right)\in U_{3}\times S_{3}\left(X\times R\right) \right\}.
\]
Define $p_3(x, y)= p_2(x, y)+16\varepsilon_2 \cdot \mu_{A_2 }(x, y)$ for each $(x, y)\in X\times R$. Then, by the definitions of $p_3$ and $p_3'$, we get that $p_3\left(\alpha_{2}e_{X,2},\alpha_{2}e_{2}\right)=1$.
We next  prove that
\begin{eqnarray*}
 \quad \quad \quad \quad\quad \quad s_{3}&=&\sup\left\{ \sigma_{\partial f} \left((x,y),(e_{X},e)\right): ((x,y),(e_{X},e))\right.
\\
&\in& U_{3}\times B \left( \left(\alpha_{2}e_{X,2},\alpha_{2}e_{2}\right),25\varepsilon_{2} \right) \cap B_{3}\left(X\times R\right) \}.  \quad \quad \quad\quad \quad ~(2.9)
\end{eqnarray*}
We proceed to prove formula (2.9), with two-dimensional spatial geometry playing a crucial role throughout the proof.
\par
In fact, there exists a real number $c_2\in (0,+\infty)$ so that
$c_2\cdot\mu_{A_2}(\alpha_2 e_{X,2},$ $\alpha_2 e_{2})=p_3''(\alpha_2 e_{X,2},\alpha_2 e_{2})$.
Therefore, by the definition of $p_3''$, we get that $c_2\in (1, 1+ 1/256)$.
Moreover, we pick a point $( e_{X,0},e_{0})\not \in  \left\{ \lambda \cdot \left( e_{X,2},  e_{2}\right): \lambda\in R\right\}$. Then we define the two-dimensional subspace $M_0$ of  $X\times R$, where
\[
M_0= \left\{  \lambda \left( e_{X,0},e_{0}\right)+ \xi    \left( e_{X,2},e_{2}\right):   \lambda\in R,~\xi  \in R                    \right\}\subset X\times R.
\]
Since the space $M_0$ is a two-dimensional  subspace of $X\times R$,  by the Lemma 2.4, there exists a point $( u_{X,0},u_{0})$ with $p_3''\left( u_{X,0},u_{0}\right)=1$ such that
\begin{eqnarray*}
&&\left[( \alpha_{2}e_{X,2},\alpha_{2}e_{2}), ( u_{X,0},u_{0})\right]
\\
&=&\overline{S''_{3}}\left(X\times R\right)\cap \left\{ (1- \xi) \left ( \alpha_{2}e_{X,2},\alpha_{2}e_{2}\right)+ \xi\left( u_{X,0},u_{0}\right):  \xi \in R     \right\},
\end{eqnarray*}
where $\overline{S''_{3}}(X\times R)=\{  (x,y) \in X\times R: \mu_{D_2}  (x,y)=1               \}$ and $\left[( \alpha_{2}e_{X,2},\alpha_{2}e_{2}), ( u_{X,0},u_{0})\right]$  denotes a line segment.
Since $M_0$ is a two-dimensional  space, by the Lemma 2.4, there exists a point $( v_{X,0},v_{0})\in X\times R$ with $c_2\mu_{ A_2 }( v_{X,0},v_{0})=1$ such that
\[
\left[( \alpha_{2}e_{X,2},\alpha_{2}e_{2}), ( v_{X,0},v_{0})\right] \quad \quad \quad \quad  \quad \quad \quad \quad \quad \quad \quad \quad  \quad \quad \quad \quad \quad \quad \quad \quad
\]
\[
= \left\{ (e_{X},e):  c_2 \mu_{A_2 }(e_{X},e)=1      \right\}\cap \left\{  (1- \xi)\left( \lambda_{2}e_{X,2},\lambda_{2}e_{2}\right)+ \xi \left( v_{X,0},v_{0}\right):  \xi \in R     \right\}.
\]
Since the space $M_0$ is a two-dimensional subspace of $X\times R$, by the Lemma 2.4, there exists a point $( v'_{X,0},v'_{0})\in \{   ( \alpha u_{X,0},\alpha u_{0}): \alpha\in R^{+} \}$ such that
\[
\left( v'_{X,0},v'_{0}\right)=  k \left( u_{X,0},u_{0}\right)\in \left\{(1- \xi) \left ( \alpha_{2}e_{X,2},\alpha_{2}e_{2}\right)+\xi  \left( v_{X,0},v_{0}\right):  \xi \in R     \right\}.            \eqno     (2.10)
\]
We shall split the proof of  formula (2.9) into two cases. The argument for Case I
\\
is further broken down into three parts: (a), (b) and (c).

\par Case I. (a) Let  $\|( \alpha_{2}e_{X,2},\alpha_{2}e_{2})- ( v'_{X,0},v'_{0})\|\geq \varepsilon_{2}$ and $\|( \alpha_{2}e_{X,2},\alpha_{2}e_{2})- ( u_{X,0},u_{0})\|$ $\geq \varepsilon_{2}$.
Noticing that $M_0$ is a two-dimensional space, we get that for a sufficiently small $\xi\in (0,1)$, there exists a function $h\left(\xi\right)\in (0,1)$ and a real number
$\alpha_{\xi}\in (1,$ $1+ 1/256)$  such that
\[
\alpha_{\xi} \left[(1-h(\xi))( \alpha_{2}e_{X,2},\alpha_{2}e_{2})+h(\xi)\left( v'_{X,0},v'_{0}\right)\right]=(1-\xi)( \alpha_{2}e_{X,2},\alpha_{2}e_{2})+ \xi( u_{X,0},u_{0}).
\]
Since $\mu_{D_2 } \left[(1-\xi)( \alpha_{2}e_{X,2},\alpha_{2}e_{2})+ \xi( u_{X,0},u_{0})\right]\equiv 1$ for all $\xi\in [0,1]$, we get that
\begin{eqnarray*}
&&c_2\cdot \mu_{A_2 } \left[(1-\xi)( \alpha_{2}e_{X,2},\alpha_{2}e_{2})+ \xi( u_{X,0},u_{0})\right]
\\
&=&\frac{\mu_{ D_2 } \left[(1-\xi)( \alpha_{2}e_{X,2},\alpha_{2}e_{2})+ \xi( u_{X,0},u_{0})\right]}{\mu_{D_2} \left[\left(1-h(\xi)\right)\cdot( \alpha_{2}e_{X,2},\alpha_{2}e_{2})+ h(\xi)\left( v'_{X,0},v'_{0}\right)\right]}
\\
&=& \frac{1}{\mu_{D_2} \left[\left(1-h(\xi)\right)\cdot( \alpha_{2}e_{X,2},\alpha_{2}e_{2})+ h(\xi)\left( v'_{X,0},v'_{0}\right)\right]}.
\end{eqnarray*}
Noticing that $\alpha_{2}\left( e_{X,2},e_{2}\right)\neq( u_{X,0},u_{0})$ and $( v'_{X,0},v'_{0})\in \{   \alpha\left( u_{X,0},u_{0}\right): \alpha\in R \}$, by
\[
(\alpha_{\xi}\cdot (1-h(\xi))-(1-\xi))\cdot( \alpha_{2}e_{X,2},\alpha_{2}e_{2})=\xi\left( u_{X,0},u_{0}\right)-\alpha_{\xi}h(\xi)\cdot\left( v'_{X,0},v'_{0}\right),
\]
it is easy to see that $\alpha_{\xi}\cdot (1-h(\xi))=(1-\xi)$. Therefore, by the formulas $h\left(0\right)=0$ and $\alpha_{\xi}\in \left(1,1+ 1/256\right)$, we have the following inequalities
\[
\mathop {\lim\sup }\limits_{\xi \to 0^{+} }\left(\frac{h(\xi)-h(0)}{\xi-0}\right)=\mathop {\lim\sup }\limits_{\xi \to 0^{+} }\frac{1}{\alpha_{\xi}}\left( \frac{\alpha_{\xi}-1+\xi}{\xi}\right)\geq\frac{128}{129}.  \eqno     ~~~~~~~~~~~~~~~~~~~~~~~~~~(2.11)
\]
Since $\mu_{A_2 }$ is a continuous convex function, we get that $\mu_{A_2 }$ possesses a right-hand derivative.
Therefore,
by  the derivative method of composite function, we have
\[
\quad \quad\quad \quad \mathrm{\frac{d^{+}}{d\xi}}\mu_{A_2 } \left[\xi\left( \alpha_{2}e_{X,2},\alpha_{2}e_{2}\right)+ (1-\xi)\left( u_{X,0},u_{0}\right)\right]\quad \quad \quad\quad \quad \quad \quad \quad \quad\quad \quad \quad \quad \quad \quad\quad \quad \quad
\]
\[
\quad \quad=~  \left({\frac{\mathrm{d^{+}}}{\mathrm{d} h(\xi)}}\frac{1}{c_2\mu_{D_2} \left[(1-h(\xi))\left( \alpha_{2}e_{X,2},\alpha_{2}e_{2}\right)+ h(\xi)\left( v'_{X,0},v'_{0}\right)\right]}\right)  \left(\mathrm{\frac{d^{+}}{dt}}h(\xi) \right) \quad \quad \quad\quad \quad \quad\quad \quad \quad\quad \quad \quad \quad \quad \quad\quad \quad \quad \quad \quad \quad\quad \quad \quad
\]
\[
\quad \quad=~\left(\frac{1}{c_2\mu_{D_2} \left[(1-h(\xi))\left( \alpha_{2}e_{X,2},\alpha_{2}e_{2}\right)+ h(\xi)\left( v'_{X,0},v'_{0}\right)\right]}\right)^{2}\quad \quad \quad\quad \quad \quad\quad \quad \quad\quad \quad \quad\quad \quad \quad\quad \quad \quad
\]
\[
\quad \quad\quad \quad\cdot \left\langle     x^{*}|_{h(\xi)} ,  \left ( \alpha_{2}e_{X,2},\alpha_{2}e_{2}\right)-  \left( v'_{X,0},v'_{0}\right)           \right\rangle\cdot \left(\mathrm{\frac{d^{+}}{dt}}h(\xi) \right) \quad \quad \quad \quad \quad \quad \quad \quad \quad\quad \quad \quad \quad \quad \quad\quad \quad \quad \quad \quad \quad\quad \quad \quad
\]
for a sufficiently small $\xi\in  [0,1)$, where $\mathrm{{d^{+}}/{d\xi}}$ denotes right-hand derivative and
\[
 x^{*}|_{h(\xi)}=d_Gc_2\mu_{D_2} \left[(1-h(\xi))( \alpha_{2}e_{X,2},\alpha_{2}e_{2})+ h(\xi)\left( v'_{X,0},v'_{0}\right)\right].
\]
Therefore, by the definition of $A_2$, we get that $\left\langle     x^{*}|_{h(0)} ,   ( \alpha_{2}e_{X,2},\alpha_{2}e_{2})-  ( v_{X,0},v_{0})        \right \rangle $ $\geq 7/8$.
Moreover, by the inequality  $\left\|( \alpha_{2}e_{X,2},\alpha_{2}e_{2})- \left( v'_{X,0},v'_{0}\right)\right\|\geq \varepsilon_{2}$ and the formula (2.10), it is easy to that
$\left\langle     x^{*}|_{h(0)} ,   ( \alpha_{2}e_{X,2},\alpha_{2}e_{2})-  ( v'_{X,0},v'_{0})        \right \rangle \geq \varepsilon_{2}/2$. Therefore, by the formula $c_2\in (1, 1+ 1/256)$, we get that
\[
\quad\quad \quad \left.\mathrm{\frac{d^{+}}{d\xi}}\mu_{  A_2 } \left[\xi\left( \alpha_{2}e_{X,2},\alpha_{2}e_{2}\right)+ (1-\xi)\left( u_{X,0},u_{0}\right)\right]\right|_{\xi=0}\quad \quad \quad\quad \quad \quad \quad \quad \quad\quad \quad \quad \quad \quad \quad\quad \quad \quad
\]
\[
\quad=~\left(\frac{1}{c_2\mu_{D_2} \left[( \alpha_{2}e_{X,2},\alpha_{2}e_{2})\right]}\right)^{2}\left\langle     x^{*}|_{h(0)} ,   ( \alpha_{2}e_{X,2},\alpha_{2}e_{2})-  \left( v'_{X,0},v'_{0}\right)           \right\rangle \left(\left.\mathrm{\frac{d^{+}}{dt}}h(\xi) \right|_{\xi=0}\right)\quad \quad\quad \quad\quad
\]
\[
\quad\geq~ \left( \frac{128}{129} \cdot \frac{1}{c_2\mu_{D_2} \left[( \alpha_{2}e_{X,2},\alpha_{2}e_{2})\right]}\right)^{2} \left\langle     x^{*}|_{h(0)} ,   ( \alpha_{2}e_{X,2},\alpha_{2}e_{2})-  \left( v'_{X,0},v'_{0}\right)           \right\rangle \geq \frac{1}{4}\varepsilon_{2}.\quad \quad\quad \quad\quad
\]
Since $A_2 $ is a convex set, we define the continuous  convex functional $f$, where
\[
f\left(\xi\right)=\mu_{ A_2 } \left[(1-\xi)\left( \alpha_{2}e_{X,2},\alpha_{2}e_{2}\right)+ \xi\left( u_{X,0},u_{0}\right)\right]~\quad~~~~~~~~~\mathrm{for}~~~~~~~~~\mathrm{every}~~~~~~~~~\quad~\xi\in [0,+\infty).
\]
Since the functional $f$ is convex, by the definitions of $f$ and $A_2$, it is easy to see that $f$ is increasing on the interval $[0,1]$. Hence we define the continuous convex function $g$
such that $g(\xi)= g_1(\xi)+ g_2(\xi)$, where
\[
g_1\left(\xi\right)=p_2\left[(1-\xi ) \left( \alpha_{2}e_{X,2},\alpha_{2}e_{2}\right)+ \xi\left( u_{X,0},u_{0}\right)\right]
\]
and
\[
g_2\left(\xi\right)=16\varepsilon_2 \cdot \mu_{ A_2} \left[(1-\xi )\left( \alpha_{2}e_{X,2},\alpha_{2}e_{2}\right)+ \xi\left( u_{X,0},u_{0}\right)\right]
\]
for all $\xi\in [0,+\infty)$. Since the convex function $f$ is increasing on $[0,1]$, we obtain that $g_2$ is increasing on the interval $[0,1]$.
We next will prove that
\[
g\left(\xi\right)\geq \mu_{ D_2} \left[(1-\xi )\left( \alpha_{2}e_{X,2},\alpha_{2}e_{2}\right)+ \xi\left( u_{X,0},u_{0}\right)\right] + 3\varepsilon_{2}^{4}~~\quad~~~~~~~~~\mathrm{for}~~~~~~~~~\mathrm{all}~~~~~~~~~\quad~~\xi\in \left[ \varepsilon_2^{4}+ 2\varepsilon_2^{2},1\right]
\]
and $g$ is increasing on interval $\left[ 2\varepsilon_2^{4},+\infty\right)$.
In fact, since the function $g_1$ is convex, by $p_2 \left( \alpha_{2}e_{X,2},\alpha_{2}e_{2}\right)> p_2\left( u_{X,0},u_{0}\right)$, it is easy to see that $g_1$
is decrease first and then increase or is decreasing on the interval $[0,1]$.

\par Let $g_1'$ denote the right-hand derivative of $g_1$.
Then, by Lemma 2.4, we obtain that $g_1\left(\xi\right)\geq p_2\left( u_{X,0},u_{0}\right)$ whenever $\xi\in [0,1]$.
We will divide the proof of (a) into two cases.

\par Case 1. Let $g_1$ be decreasing in the interval $[0,\varepsilon_2^{4}]$. Then
we claim that $g_1'\left(\varepsilon_2^{4}\right)\geq -\varepsilon_2^{4}/4$. Suppose that $g_1'(\varepsilon_2^{4})<-\varepsilon_2^{4}/4$.
Then, by the definition of $g_1$, we have
\[
\left|g_1(\xi)-p_2( u_{X,0},u_{0})\right |\leq4\varepsilon_2^{12}~\quad~~~~~~~~~\mathrm{and}~~~~~~~~~\quad~\left|g_1(\xi)-p_2 ( \alpha_{2}e_{X,2},\alpha_{2}e_{2})\right |\leq4\varepsilon_2^{12}
\]
for every $\xi\in [0,1]$.
Therefore, by the above inequalities, we get that
\begin{eqnarray*}
&&4\varepsilon_2^{12} \geq  \left|g_1(\varepsilon_2^{4})-g_1(0)\right|
\\
&=&\left|p_2\left( \alpha_{2}e_{X,2},\alpha_{2}e_{2}\right)-p_2\left((1-\varepsilon_2^{4} ) \left( \alpha_{2}e_{X,2},\alpha_{2}e_{2}\right)+ \varepsilon_2^{4}\left( u_{X,0},u_{0}\right)\right)\right|.
\end{eqnarray*}
However, since the convex function $g_1$ is decreasing on the interval $[0,\varepsilon_2^{4}]$, by the inequality $g_1'(\varepsilon_2^{4})< -\varepsilon_2^{4}/4$, we have the following inequalities
\[
4\varepsilon_2^{12} \geq\left|g_1(\varepsilon_2^{4})-g_1(0)\right|\geq \left|g_1'(\varepsilon_2^{4})\right|\cdot\left|\varepsilon_2^{4}-0\right|\geq\frac{1}{4}\varepsilon_2^{4} \left|\varepsilon_2^{4}-0\right|= \frac{1}{4}\varepsilon_2^{8},
\]
this is a contradiction. Let $g_2'$ denote the right-hand  derivative of $g_2$. Therefore, by the previous proof   and  the definition of $g_2$, we get that
\[
g_2' \left(  0\right)= 16 \varepsilon_2 \left( \left.\mathrm{\frac{d^{+}}{d\xi}}\mu_{A_2 }\left[\xi\left( \alpha_{2}e_{X,2},\alpha_{2}e_{2}\right)+ (1-\xi)\left( u_{X,0},u_{0}\right)\right]\right|_{\xi=0}\right )\geq 4\varepsilon_2^{2} .        \eqno                            (2.12)
\]
Noticing that the convex function $g_2$ is increasing on interval $[0,1]$, by the above inequalities, we obtain that $g_2'(\xi)\geq g_2'(0)  >4\varepsilon_2^{2} $ whenever $\xi\geq\varepsilon_2^{4}$.
Moreover,  by  $g_1'(\varepsilon_2^{4})\geq -\varepsilon_2^{4}/4$ and $g_2'(\varepsilon_2^{4})\geq 4\varepsilon_2^{2} $, we get that
\[
g\left(\xi\right)-g\left(\varepsilon_2^{4}\right)\geq [g_1'(\varepsilon_2^{4})+g_2'(\varepsilon_2^{4})] \cdot(\xi-\varepsilon_2^{4})   \geq  \left [4\varepsilon_2^{2}-\frac{1}{4} \varepsilon_2^{4} \right]   \cdot(\xi-\varepsilon_2^{4})        >0
\]
whenever $\xi\geq\varepsilon_2^{4}$. This implies that $g$ is increasing on the interval $\left[ \varepsilon_2^{4},+\infty\right)$.
Let $\xi=\varepsilon_2^{2}+\varepsilon_2^{4} $. Then, by the above inequalities, we get that
\[
g\left(\varepsilon_2^{2}+\varepsilon_2^{4})-g(\varepsilon_2^{4}\right)\geq [g_1'(\varepsilon_2^{4})+g_2'(\varepsilon_2^{4})] \cdot\left(\varepsilon_2^{2}+\varepsilon_2^{4}-\varepsilon_2^{4}\right)   \geq  \left [4\varepsilon_2^{2}-\frac{1}{4} \varepsilon_2^{4} \right]  \varepsilon_2^{2} \geq 3 \varepsilon_2^{4}.
\]
Moreover, we define the continuous convex function $f_1$, where
\[
f_1(\xi)=(1+16\varepsilon_2)\cdot p_2\left[(1-\xi ) \left( \alpha_{2}e_{X,2},\alpha_{2}e_{2}\right)+ \xi\left( u_{X,0},u_{0}\right)\right]~\quad~~~~~~~~~\mathrm{for}~~~~~~~~~\mathrm{every}~~~~~~~~~\quad~\xi\in [0,1].
\]
Therefore, by the definition of $g$, it is easy to see that $g\left(\varepsilon_2^{4}\right)\geq f_1\left(\varepsilon_2^{4}\right) - \varepsilon_2^{4}$.
From the previous proof, we get that $g\left(\xi\right)\geq f_1\left(\xi\right)$ for all $\xi\in \left[ \varepsilon_2^{4}+ \varepsilon_2^{2},1\right]$.
Then
\[
g(\xi)\geq f_1(\xi)\geq \mu_{ D_2 } \left[(1-\xi )\left( \alpha_{2}e_{X,2},\alpha_{2}e_{2}\right)+ \xi\left( u_{X,0},u_{0}\right)\right]~~~\mathrm{for}~~~~~~~~~\mathrm{all}~~~~~~~~~\quad~~\xi\in \left[ \varepsilon_2^{4}+ \varepsilon_2^{2},1\right].
\]
Noticing that $g\left(\varepsilon_2^{4}\right)\geq f_1(\varepsilon_2^{4}) - \varepsilon_2^{4}$ and $ g\left(\varepsilon_2^{2}+\varepsilon_2^{4})-g(\varepsilon_2^{4}\right)\geq 3 \varepsilon_2^{4}$, by the definition of $f_1$, we  have the following inequalities
\begin{eqnarray*}
g\left(\varepsilon_2^{2}+\varepsilon_2^{4}\right)&\geq& g\left(\varepsilon_2^{4}\right)+3 \varepsilon_2^{4}   \geq f_1\left(\varepsilon_2^{4}\right)+3 \varepsilon_2^{4}-\varepsilon_2^{4}\geq f_1\left(\varepsilon_2^{4}\right)
\\
&\geq& \mu_{ D_2 } \left[(1-\varepsilon_2^{4} )\left( \alpha_{2}e_{X,2},\alpha_{2}e_{2}\right)+ \varepsilon_2^{4}\left( u_{X,0},u_{0}\right)\right].
\end{eqnarray*}
Since $g_2'(\varepsilon_2^{4})\geq 4\varepsilon_2^{2} $ and  $g_1'(\varepsilon_2^{4})< -\varepsilon_2^{4}/4$,
by the formula (2.12), we get that
\[
g\left(\varepsilon_2^{4}+ 2\varepsilon_2^{2}\right)-g\left(\varepsilon_2^{4}+\varepsilon_2^{2}\right)\geq \left(g_1'(\varepsilon_2^{4})+ g_2'(\varepsilon_2^{4})\right)\cdot\left(\varepsilon_2^{4}+ 2\varepsilon_2^{2}- \varepsilon_2^{4} -\varepsilon_2^{2}\right)\geq 3\varepsilon_{2}^{4}.
\]
Since $g$ is increasing on the interval $\left[ \varepsilon_2^{4}+ \varepsilon_2^{2},+\infty\right)$, we get that
$g(\xi)-g(\varepsilon_2^{4}+\varepsilon_2^{2} )\geq 3\varepsilon_{2}^{4} $ for every $\xi\in [\varepsilon_2^{4}+ 2\varepsilon_2^{2},1]$. Moreover,
by the formulas
\[
\mu_{ D_2 } \left[(1-\xi )\left( \alpha_{2}e_{X,2},\alpha_{2}e_{2}\right)+ \xi\left( u_{X,0},u_{0}\right)\right]\equiv 1 ~\quad~~~~~~~~~\mathrm{for}~~~~~~~~~\mathrm{every}~~~~~~~~~\quad~~~\xi\in [0,1],
\]
we obtain that $g\left(\varepsilon_2^{2}+\varepsilon_2^{4}\right)\geq \mu_{ D_2 } \left[(1-\xi )( \alpha_{2}e_{X,2},\alpha_{2}e_{2})+ \xi( u_{X,0},u_{0})\right]$ for every $\xi\in \left[\varepsilon_2^{2}+\varepsilon_2^{4},1\right]$.
Therefore, by the inequality $g(\xi)-g(\varepsilon_2^{2}+\varepsilon_2^{4} )\geq 3\varepsilon_{2}^{4}$, we get that
\[
g(\xi)\geq  \mu_{ D_2 } \left[(1-\xi )( \alpha_{2}e_{X,2},\alpha_{2}e_{2})+ \xi( u_{X,0},u_{0})\right]+3\varepsilon_{2}^{4}=1+3\varepsilon_{2}^{4},~\quad~\xi\in \left[ \varepsilon_2^{4}+ 2\varepsilon_2^{2},1\right].
\]

\par Case 2. Let $g_1$ be decrease first and then increase on the interval $[0,\varepsilon_2^{4}]$. Since the functional $g_1$ is convex, similar to the proof of Case 1, we obtain that
\[
g\left(\xi\right)\geq  \mu_{ D_2 } \left[(1-\xi )( \alpha_{2}e_{X,2},\alpha_{2}e_{2})+ \xi( u_{X,0},u_{0})\right]+3\varepsilon_{2}^{4}=1+3\varepsilon_{2}^{4}.
\]
for all $\xi\in \left[ \varepsilon_2^{4}+ 2\varepsilon_2^{2},1\right]$ and $g$ is increasing on the interval $\left[ 2\varepsilon_2^{4},+\infty\right)$.

\par (b)
Noticing that $p_{3}''\left( u_{X,0},u_{0}\right)=1$  and $( u_{X,0},u_{0}) \notin \{\xi \cdot \left( e_{X,2}, e_{2}\right): \xi\in R \}$, by the definitions of $D_2' $ and $\left( u_{X,0},u_{0}\right)$, we obtain that
$\mu_{ D_2' }\left( u_{X,0},u_{0}\right) = p_2\left( u_{X,0},u_{0}\right)$. Therefore, by the definition of $p_{3}''$ and $\mu_{ D_2' }\left( u_{X,0},u_{0}\right) = p_2\left( u_{X,0},u_{0}\right)$, we get that
\begin{eqnarray*}
1 &=& p_{3}''\left( u_{X,0},u_{0}\right)=p_2\left( u_{X,0},u_{0}\right)+16\varepsilon_{2} \cdot\mu_{ D_2' }\left( u_{X,0},u_{0}\right)
\\
&=& p_2\left( u_{X,0},u_{0}\right)+16\varepsilon_{2} \cdot p_2\left( u_{X,0},u_{0}\right).
\end{eqnarray*}
Then we obtain that $ p_2\left( u_{X,0},u_{0}\right)=1/\left(1+16\varepsilon_{2} \right) $. Therefore, by the definition of $s_2$ and $ p_2\left( u_{X,0},u_{0}\right)=1/(1+16\varepsilon_{2} ) $, we have the following inequalities
\[
\sigma_{\partial f}\left((x,y),( u_{X,0},u_{0})\right)=p_2\left( u_{X,0},u_{0}\right)\sigma_{\partial f}\left((x,y),   \frac{(u_{X,0},u_{0})}{p_2( u_{X,0},u_{0})} \right)\leq\frac{1}{1+16\varepsilon_{2}}s_2.
\]
Moreover, by the definitions of $p_2$ and $D_2'$, we obtain that $ \left(1+\varepsilon_{2}^{16}\right)  \mu_{ D_2' }\left( e_{X,2},e_{2}\right) = p_2\left( e_{X,2},e_{2}\right)$. Therefore, by
$p_{3}''\left( \alpha_{2}e_{X,2},\alpha_{2}e_{2}\right)=1$ and $p_{2}\left( e_{X,2},e_{2}\right)=1$,  we get that
\[
1=p_{3}''( \alpha_{2}e_{X,2},\alpha_{2}e_{2})~=~ p_2( \alpha_{2}e_{X,2},\alpha_{2}e_{2})+16\varepsilon_{2} \cdot\mu_{ D_2' }( \alpha_{2}e_{X,2},\alpha_{2}e_{2})\quad\quad \quad
\]
\[
\quad\quad \quad \quad \quad \quad \quad \quad \quad   \quad \quad =~p_2\left( \alpha_{2}e_{X,2},\alpha_{2}e_{2}\right)~+~16\varepsilon_{2}      \left[ \frac{1}{1+\varepsilon_{2}^{16}}    \cdot      p_2\left( \alpha_{2}e_{X,2},\alpha_{2}e_{2}\right)\right] \quad \quad \quad   \quad \quad \quad \quad \quad \quad\quad \quad \quad   \quad \quad \quad \quad \quad \quad \quad \quad   \quad \quad \quad \quad \quad
\]
\[
\quad\quad \quad \quad \quad \quad \quad \quad \quad   \quad \quad  =~\alpha_{2}\left[p_2\left( e_{X,2},e_{2}\right)+16\varepsilon_{2}       \cdot \frac{1}{1+\varepsilon_{2}^{16}}    \cdot        p_2\left( e_{X,2},e_{2}\right)\right]\quad\quad \quad \quad   \quad \quad \quad \quad \quad \quad \quad \quad   \quad \quad \quad \quad \quad\quad\quad \quad \quad   \quad \quad
\]
\[
\quad\quad \quad \quad \quad \quad \quad \quad \quad   \quad \quad =~ \alpha_{2}\left[1+        \frac{16\varepsilon_{2}}{1+\varepsilon_{2}^{16}}           \right].\quad\quad \quad \quad \quad\quad \quad \quad   \quad \quad   \quad \quad \quad \quad \quad \quad \quad \quad   \quad \quad \quad \quad \quad\quad\quad \quad \quad \quad \quad \quad \quad \quad   \quad \quad
\]
Since $s_1=1$ and $\varepsilon_1\in (0, 1/512^{6})$, by $\varepsilon_2= \varepsilon_1/128$, it is easy to see that $s_2\geq 7/8$. Moreover, since  $ p_2\left( u_{X,0},u_{0}\right)=1/\left(1+16\varepsilon_{2} \right) $ and $U_3\subset U_2$, by the formula (2.7) and
$1= \alpha_{2}\left[1+        {16\varepsilon_{2}}/({1+\varepsilon_{2}^{16}} )    \right]      $, we have the following inequalities
\[
\sigma_{\partial f}\left((x,y),(\alpha_{2}e_{X,2},\alpha_{2}e_{2})\right)- \sigma_{\partial f}\left((x,y),( u_{X,0},u_{0})\right)\quad \quad\quad \quad\quad
\]
\[
 \quad\quad \quad=~ \alpha_{2}\cdot\sigma_{\partial f}\left((x,y),(e_{X,2},e_{2})\right)~-~ \sigma_{\partial f}\left((x,y),( u_{X,0},u_{0})\right)\quad \quad\quad \quad\quad \quad\quad \quad\quad \quad\quad \quad\quad \quad\quad \quad\quad \quad\quad \quad\quad \quad\quad \quad
\]
\[
 \quad\quad \quad\geq~ \left[1+        \frac{16\varepsilon_{2}}{1+\varepsilon_{2}^{16}}           \right]^{-1}\left(1-\frac{1}{16 }\varepsilon_{2}^{32}\right) s_2~-~ \frac{1}{1+16\varepsilon_{2}}s_2\quad \quad\quad \quad \quad\quad \quad\quad\quad \quad\quad \quad\quad \quad\quad \quad
\]
\[
 \quad\quad \quad\geq~ \left( \left[1+        \frac{16\varepsilon_{2}}{1+\varepsilon_{2}^{16}}           \right]^{-1}s_2- \frac{1}{1+16\varepsilon_{2}}s_2 \right)   ~-~\frac{1}{16 }\varepsilon_{2}^{32}\left[1+        \frac{16\varepsilon_{2}}{1+\varepsilon_{2}^{16}}           \right]^{-1}s_2\quad \quad\quad \quad\quad \quad\quad \quad\quad \quad\quad \quad\quad \quad\quad \quad\quad \quad\quad \quad\quad \quad
\]
\[
 \quad\quad \quad\geq ~\varepsilon_{2}^{18}s_2   ~-~\frac{1}{16 }\varepsilon_{2}^{32}\left[1+        \frac{16\varepsilon_{2}}{1+\varepsilon_{2}^{16}}           \right]^{-1}s_2 \geq \varepsilon_{2}^{20} \quad \quad\quad \quad\quad\quad \quad \quad\quad \quad \quad\quad \quad \quad\quad \quad\quad \quad\quad \quad
\]
for every $(x,y)\in U_3\subset U_2$.

\par (c) Define the set $L_{0}=\left\{(1-\xi)\left(\alpha_2 e_{X,2},\alpha_2 e_{2}\right)+\xi\left( u_{X,0},u_{0}\right): \xi\geq 0 \right\}$. Then, by the previous proof, we obtain that
$g$ is increasing on the interval $\left[ \varepsilon_2^{4}+ 2\varepsilon_2^{2},+\infty\right)$.
Moreover, we pick a point
\[
(e_X,e)\in S_{3}\left(X\times R\right)\cap M_0~~~~~\quad~~~~~ \mathrm{with}~~~~~\quad~~~~~~~ (e_X,e)\not\in  B\left((\alpha_2 e_{X,2},\alpha_2 e_{2}),25\varepsilon_2\right).
\]
Since the space $M_0$ is a two-dimensional subspace of $X\times R$, by the formula $(0,0) $ $ \in \{  \xi\cdot(\alpha_2 e_{X,2},\alpha_2 e_{2}): \xi\leq 1   \}$, we get that
\[
(0,0) \in \overline{\mathrm{co} }\left( L_{0}\cup \left\{  \xi\cdot(\alpha_2 e_{X,2},\alpha_2 e_{2}): \xi\leq 1  \right\}\right).~~~
\]
Since the space $M_0$ is a two-dimensional subspace of $X\times R$, by $ (e_{X,2},e_{2})\neq (0,0)$,
there exists a functional $x^{*}|_{M_{0}}\in M_0^{*}$ such that
\[
N\left(x^{*}|_{M_{0}}\right)=\{x\in M_{0}: x^{*}|_{M_{0}}(x)=0\}=\{\xi\cdot(e_{X,2},e_{2})\in M_{0}:\xi\in R \}.~~
\]
Moreover, it is well known that $(e_X,e)\notin N\left(x^{*}|_{M_{0}}\right)$ or $(e_X,e)\in N\left(x^{*}|_{M_{0}}\right)$.

\par Suppose that $(e_X,e)\notin N\left(x^{*}|_{M_{0}}\right)$. Then, by the definition of $N\left(x^{*}|_{M_{0}}\right)$, we get that  $( u_{X,0},u_{0})$ $\notin N\left(x^{*}|_{M_{0}}\right)$.
Since the space $M_0$ is a two-dimensional subspace of $X\times R$, by the formula $( u_{X,0},u_{0})\notin N\left(x^{*}|_{M_{0}}\right)$ and $(e_X,e)\notin N\left(x^{*}|_{M_{0}}\right)$, we may assume without loss of generality that
\[
\left\langle x^{*}|_{M_{0}}, ( u_{X,0},u_{0})\right\rangle>0 ~~~\quad~~~~~ \mathrm{and}~~~~~\quad~~\left\langle x^{*}|_{M_{0}}, (e_X,e)\right\rangle>0 .~~~~~~~~~~~~~~~~~\eqno~~~~~~~~~~~~~~~~~~~~~~~~~~~~~~~~~~~~  (2.13)
\]
We proceed to verify the formula given below
\[
(e_X,e) \in \overline{\mathrm{co}}\left( L_{0}\cup \left\{  \xi\left(\alpha_2 e_{X,2},\alpha_2 e_{2}\right): \xi\leq 1  \right\}\right).~~~~~~~~~~~~~~~~~~~~~~~~~~\eqno~~~~~~~~~~~~~~~~~~~~~~~~~~~~~~~~~~~~  (2.14)
\]
Otherwise, we obtain that $(e_X,e) \notin \overline{\mathrm{co}}\left( L_{0}\cup \left\{  \xi\left(\alpha_2 e_{X,2},\alpha_2 e_{2}\right): \xi\leq 1  \right\}\right)$.
Since the space $M_0$ is a two-dimensional subspace of $X\times R$, by Lemma 2.5,
there exists a real number $\xi_0\in [0,+\infty)$ such that
\[
(1-\xi_0)\left( \alpha_2e_{X,2}, \alpha_2e_{2}\right) +\xi_0\left( u_{X,0},u_{0}\right)\in \left\{   (1-\lambda)\left (0,0\right)+ \lambda\left(e_X,e\right)  : \lambda\in [0,1]    \right\}.~~~~~~~~~~~~~~~\eqno~~~~~~~~~~~~~~~~~~~~~~~~~~~~~~~~~~~~  (2.15)
\]
We claim that $\xi_0\in \left[0, \varepsilon_2^{4}+ 2\varepsilon_2^{2}\right]$. Otherwise, we get that $\xi_0 \in\left (\varepsilon_2^{4}+ 2\varepsilon_2^{2},+\infty\right)$.
Let
\[
( w_{X,0},w_{0})=(1-\xi_0)\left( \alpha_2e_{X,2}, \alpha_2e_{2}\right) +\xi_0\left( u_{X,0},u_{0}\right)\in M_0.
\]
Moreover, by the definition of $g$, we get that $g(\xi)\geq 1-\varepsilon_2^{10}$ for every $\xi\in [0,+\infty)$.
Then, from the conclusion of (a) and $\xi_0 \in\left (\varepsilon_2^{4}+ 2\varepsilon_2^{2},+\infty\right)$, it is easy to see that $p_3\left( w_{X,0},w_{0}\right)>1$.
Noticing that $(e_X,e)\in S_{3}\left(X\times R\right)\cap M_0$, by the formula (2.15), there exists a real number $\lambda_0\in [0,1]$ such that
$\left( w_{X,0},w_{0}\right)=\lambda_0\left( e_X, e\right)$. Then, by $p_3\left( w_{X,0},w_{0}\right)>1$ and $ p_3\left(\lambda_0 e_X,\lambda_0 e\right)\leq1$, we get that
\[
1< p_3\left( w_{X,0},w_{0}\right)=p_3\left( (1-\xi_0)( \alpha_2e_{X,2}, \alpha_2e_{2}) +\xi_0( u_{X,0},u_{0})\right)= p_3\left(\lambda_0 e_X,\lambda_0 e\right)\leq1,
\]
this is a contradiction. Hence we obtain that the formula (2.14) is true.
We define the closed convex subset
$
\mathrm{co}\left\{ (\alpha_2 e_{X,2},\alpha_2 e_{2}),  ( u_{X,0},u_{0}),(0,0)              \right\}
$
of $M_0$. Moreover, by the formula $(e_X,e)\in  S_{3}\left(X\times R\right)\cap M_0$, we get that
\[
(e_X,e)\in \mathrm{co}\left\{ (\alpha_2 e_{X,2},\alpha_2 e_{2}),  ( u_{X,0},u_{0}),(0,0)              \right\}\subset  M_0~~~~~~~~~~~~~~\eqno~~~~~~~~~~~~~~~~~~~~~~~~~~~~~~~~~~~~  (2.16)
\]
or
\[
(e_X,e)\notin \mathrm{co}\left\{ (\alpha_2 e_{X,2},\alpha_2 e_{2}),  ( u_{X,0},u_{0}),(0,0)              \right\}\subset  M_0.~~~~~~~~~~~~~~\eqno~~~~~~~~~~~~~~~~~~~~~~~~~~~~~~~~~~~~  (2.17)
\]
Suppose that $(e_X,e)\in \mathrm{co}\left\{ (\alpha_2 e_{X,2},\alpha_2 e_{2}),  ( u_{X,0},u_{0}),(0,0)              \right\}$. Then, from
the proof of (b), it is well known that
\begin{eqnarray*}
\sigma_{\partial f}\left((x,y),(\alpha_{2}e_{X,2},\alpha_{2}e_{2})\right)>
\sigma_{\partial f}\left((x,y), ( u_{X,0},u_{0})\right)+ \varepsilon_{2}^{20}~\quad~~~\mathrm{whenever}~~~~~\quad~~~ (x,y)\in U_3.
\end{eqnarray*}
Moreover, by $(e_X,e)\in \mathrm{co}\left\{ (\alpha_2 e_{X,2},\alpha_2 e_{2}),  ( u_{X,0},u_{0}),(0,0)              \right\}$, there exists a set $\{\lambda_1,$ $\lambda_2,\lambda_3 \}\subset [0,1]$
with $\lambda_1+\lambda_2+\lambda_3 =1$ such that
\[
(e_X,e)=\lambda_1\left (\alpha_2 e_{X,2},\alpha_2 e_{2}\right) + \lambda_2\left ( u_{X,0},u_{0}\right)+ \lambda_3 \left(0,0\right) .
\]
We pick a point $(x_0,y_0)\in U_3$ and
a functional  $(x_0^*, y_0^*)\in\partial f(x_0,y_0)$. Then, by the formula $(e_X,e)=\lambda_1 (\alpha_2 e_{X,2},\alpha_2 e_{2}) + \lambda_2 ( u_{X,0},u_{0})+ \lambda_3 (0,0) $, we get that
\[
\left\langle    (x_0^*, y_0^*),  (e_X,e)     \right\rangle = \lambda_1  \left\langle    (x_0^*, y_0^*),  (\alpha_2 e_{X,2},\alpha_2 e_{2})    \right\rangle +  \lambda_2 \left\langle    (x_0^*, y_0^*),  ( u_{X,0},u_{0})   \right\rangle + \lambda_3 0.
\]
Therefore, by $\{\lambda_1,\lambda_2,\lambda_3 \}\subset [0,1]$
and $\lambda_1+\lambda_2+\lambda_3 =1$,
we get that
\[
\left\langle    (x_0^*, y_0^*),  (e_X,e)     \right\rangle \leq \max \left\{  \left\langle    (x_0^*, y_0^*),  (\alpha_2 e_{X,2},\alpha_2 e_{2})    \right\rangle,   \left\langle    (x_0^*, y_0^*),  ( u_{X,0},u_{0})   \right\rangle ,0          \right\}.
\]
Noticing that $(x_0^*, y_0^*)\in\partial f(x_0,y_0)$ and $\sigma_{\partial f}\left((x_0,y_0),(\alpha_{2}e_{X,2},\alpha_{2}e_{2})\right)\geq \sigma_{\partial f}((x_0,y_0), $ $( u_{X,0},u_{0}))+ \varepsilon_{2}^{20}$, by
the above inequalities and the formula (2.7), we get that
\begin{eqnarray*}
&&\left\langle    (x_0^*, y_0^*),  (e_X,e)     \right\rangle
\\
&\leq& \max \left\{  \left\langle    (x_0^*, y_0^*),  (\alpha_2 e_{X,2},\alpha_2 e_{2})    \right\rangle,   \left\langle    (x_0^*, y_0^*),  ( u_{X,0},u_{0})   \right\rangle  ,0         \right\}
\\
&\leq& \max \left\{  \sigma_{\partial f}\left((x_0,y_0),(\alpha_{2}e_{X,2},\alpha_{2}e_{2})\right),   \sigma_{\partial f}\left((x_0,y_0), ( u_{X,0},u_{0})\right) ,0         \right\}
\\
&=& \sigma_{\partial f}\left((x_0,y_0),(\alpha_{2}e_{X,2},\alpha_{2}e_{2})\right).
\end{eqnarray*}
Therefore, by $(x_0^*, y_0^*)\in\partial f\left(x_0,y_0\right)$ and the above inequalities, we get that
\[
\sigma_{\partial f}\left((x_0,y_0), (e_X,e)  \right) \leq \sigma_{\partial f}\left((x_0,y_0),(\alpha_{2}e_{X,2},\alpha_{2}e_{2})\right).
\]
Since $(x_0,y_0)\in U_3$ is arbitrary, by the above inequalities, we get that
\[
\sigma_{\partial f}\left((x,y),(\alpha_{2}e_{X,2},\alpha_{2}e_{2})\right)\geq \sigma_{\partial f}\left((x,y), (e_X,e)\right)~\quad~~~\mathrm{for}~~~~\mathrm{every}~~~\quad~~~ (x,y)\in U_3.
\]

\par Suppose that $(e_X,e)\not\in \mathrm{co}\left\{ (\alpha_2 e_{X,2},\alpha_2 e_{2}),  ( u_{X,0},u_{0}),(0,0)              \right\}$.
Since the space $M_0$ is a two-dimensional subspace of $X\times R$, by the formula
\[
(e_X,e) \in \overline{\mathrm{co}}\left( L_{0}\cup \left\{  \xi\cdot(\alpha_2 e_{X,2},\alpha_2 e_{2}): \xi\leq 1  \right\}\right)
\]
and $(e_X,e)\not\in \mathrm{co}\left\{ (\alpha_2 e_{X,2},\alpha_2 e_{2}),  ( u_{X,0},u_{0}),(0,0)              \right\}$,
it is easy to see that there exists a point $(w_x,w)\in \left\{  \xi\left( u_{X,0},u_{0}\right): \xi\in [0, 1]  \right\}$ so  that
\[
 (w_x,w)\in\left\{(1-\xi)\cdot(\alpha_2 e_{X,2},\alpha_2 e_{2})+\xi\left(e_X,e\right): \xi\in [0,1] \right\}.
\]
Noticing that $(w_x,w)\in \left\{ \xi \left(  u_{X,0}, u_{0}\right): \xi\in [0, 1]  \right\}$, there exists a real number $\xi_0^{1} \in [0,1]$ such that $(w_x,w)=\xi_0^{1}\left( u_{X,0}, u_{0}\right)$.
Moreover, by the formula
\[
(e_X,e)\in S_{3}\left(X\times R\right)\cap M_0~~~\quad~~~~~ \mathrm{and}~~~~~\quad~~( \alpha_2e_{X,2}, \alpha_2e_{2})\in S_{3}\left(X\times R\right)\cap M_0,
\]
we obtain that $ (w_x,w)\in B_{3}\left(X\times R\right)\cap M_0 $. Noticing that $(w_x,w)=\xi_0^{1} \left(u_{X,0}, u_{0}\right)$ and
$\sigma_{\partial f}\left((x,y),(\alpha_{2}e_{X,2},\alpha_{2}e_{2})\right)>
\sigma_{\partial f}\left((x,y),(u_{X,0},u_{0})\right) +\varepsilon_{2}^{20}$,
we get that
\begin{eqnarray*}
\sigma_{\partial f}\left((x,y),(\alpha_{2}e_{X,2},\alpha_{2}e_{2})\right)&>&
\sigma_{\partial f}\left((x,y),(u_{X,0},u_{0})\right) +\varepsilon_{2}^{20}
\\
 &\geq&   \sigma_{\partial f}\left((x,y), (w_x,w) \right)+\varepsilon_{2}^{20}
\end{eqnarray*}
for every $(x,y)\in U_3$.
However, we have proved the following inequality
\[
\left\langle (x^{*},y^{*}), ( e_{X,2}, e_{2}) \right\rangle > \sup \left\{  \sigma_{\partial f}\left((x,y),( e_{X,2}, e_{2})\right):  (x,y)\in U_3              \right\}-\frac{1}{2}\varepsilon_2^{32}
\]
for every $(x^{*},y^{*})\in \partial f\left(U_3\right)$. Therefore, by the above inequality, we obtain that
\begin{eqnarray*}
\left\langle (x^{*},y^{*}), ( \alpha_2e_{X,2}, \alpha_2e_{2}) \right\rangle & >& \sup \left\{  \sigma_{\partial f}((x,y),( \alpha_2e_{X,2}, \alpha_2e_{2})):  (x,y)\in U_3              \right\}-\frac{1}{2}\varepsilon_2^{32}
\\
&\geq& \sup \left\{  \sigma_{\partial f}((x,y), (w_x,w)):  (x,y)\in U_3              \right\}
- \frac{1}{2}\varepsilon_2^{32}  +\varepsilon_{2}^{20}
\\
&\geq& \sup\left\{ \sigma_{\partial f} ((x,y),(w_x,w)): (x,y)\in U_{3} \right\}+\frac{1}{2}\varepsilon_{2}^{32}
\end{eqnarray*}
for every $(x^*, y^*)\in\partial f(U_3)$. Therefore, by the formulas $(w_x,w)\in \{ \xi\cdot \left(  u_{X,0}, u_{0}\right)\in X\times R: \xi\in [0, 1]  \}$ and
 $(w_x,w)\in\left\{\xi\left(\alpha_2 e_{X,2},\alpha_2 e_{2}\right)+(1-\xi)\cdot(e_X,e): \xi\in [0,1] \right\}$, there exists a real number $\xi_2\in [0,1]$ such that
\[
\left\langle (x^*, y^*),(w_x,w) \right\rangle= \xi_2\left\langle (x^*, y^*), \left(\alpha_2 e_{X,2},\alpha_2 e_{2}\right) \right\rangle+ (1-\xi_2)\left\langle (x^*, y^*),(e_X,e) \right\rangle
\]
for all $(x^*, y^*)\in\partial f\left(U_3\right)$. Then, by $\left\langle (x^*, y^*), \left(\alpha_2 e_{X,2},\alpha_2 e_{2}\right) \right\rangle   > \sigma_{\partial f} \left((x,y),(w_x,w)\right)$
for every $(x^*, y^*)\in\partial f\left(U_3\right)$ and  $(x,y)\in U_3$, we get that
\[
 \left\langle (x^*, y^*), \left(\alpha_2 e_{X,2},\alpha_2 e_{2}\right) \right\rangle >\sigma_{\partial f} \left((x,y),(w_x,w)\right)\geq\left\langle (x^*, y^*), (w_x,w) \right\rangle
\]
for every $(x^*, y^*)\in\partial f\left(U_3\right)$ and  $(x,y)\in U_3$. Therefore, by $\xi_2\in [0,1]$, we get that
\[
 \left\langle (x^*, y^*), \left(\alpha_2 e_{X,2},\alpha_2 e_{2}\right) \right\rangle >\left\langle (x^*, y^*),(w_x,w) \right\rangle\geq\left\langle (x^*, y^*),(e_X,e) \right\rangle
\]
for every $(x^*, y^*)\in\partial f(U_3)$.
Pick a point $(x_0,y_0)\in U_3$ and a functional $(x_0^*, y_0^*)\in\partial f(x_0,y_0)$.
Then, by the above inequalities, we get that
\[
\sigma_{\partial f}\left((x_0,y_0),(\alpha_{2}e_{X,2},\alpha_{2}e_{2})\right)\geq \left\langle (x_0^*, y_0^*), \left(\alpha_2 e_{X,2},\alpha_2 e_{2}\right) \right\rangle \geq  \left\langle (x_0^*, y_0^*),(e_X,e) \right\rangle.
\]
Since $(x_0^*, y_0^*)$ is any point in set $\partial f(x_0,y_0)$, by the above inequalities, we get that
\[
\sigma_{\partial f}\left((x_0,y_0),(\alpha_{2}e_{X,2},\alpha_{2}e_{2})\right)\geq    \sigma_{\partial f}\left((x_0,y_0),(e_X,e) \right).
\]
Moreover, since $(x_0,y_0)$ is any point in set $U_3$, by the above inequality, we have
\[
\sigma_{\partial f}\left((x,y),(\alpha_{2}e_{X,2},\alpha_{2}e_{2})\right)\geq \sigma_{\partial f}\left((x,y),(e_X,e) \right)~~\quad~~~\mathrm{for}~~~~\mathrm{every}~~~\quad~~~ (x,y)\in U_3.
\]

Suppose that
$(e_X,e)\in N(x^{*}|_{M_{0}})$. Since
$M_0$ is a two-dimensional space and $s_3'=\sup \left\{  \sigma_{\partial f}\left((x,y),( \alpha_2e_{X,2}, \alpha_2e_{2})\right):  (x,y)\in U_3              \right\}$, by $(e_X,e)\not\in  B\left((\alpha_2 e_{X,2},\alpha_2 e_{2}),25\varepsilon_2\right)$, we get that $p_{3}'(e_X,e)=p_{3}'(e_X,e)$. This implies that
\[
s_3\geq s_3'\geq \sup\left\{ \sigma_{\partial f} ((x,y),(e_X,e)): (x,y)\in U_{3} \right\}~~~\quad~~~~~ \mathrm{whenever}~~~~~\quad~~~(e_X,e)\in N\left(x^{*}|_{M_{0}}\right).
\]

\par Case II. Let $\|( \alpha_{2}e_{X,2},\alpha_{2}e_{2})- ( v'_{X,0},v'_{0})\|< \varepsilon_{2}$ or $\|( \alpha_{2}e_{X,2},\alpha_{2}e_{2})- ( u_{X,0},u_{0})\|< \varepsilon_{2}$.
We will prove that $\|( \alpha_{2}e_{X,2},\alpha_{2}e_{2})- ( u_{X,0},u_{0})\|< 20 \varepsilon_{2}$.
In fact, we can assume without loss of generality that $\|( \alpha_{2}e_{X,2},\alpha_{2}e_{2})- ( v'_{X,0},v'_{0})\|< \varepsilon_{2}$.
Otherwise, we have $\|( \alpha_{2}e_{X,2},\alpha_{2}e_{2})- ( u_{X,0},u_{0})\|< \varepsilon_{2}$.
Moreover, by
$\alpha_{2} \geq 1/(1+ 16\varepsilon_{2})$, we have
\[
p_2 \left( \alpha_{2}e_{X,2},\alpha_{2}e_{2} \right)=\alpha_{2}p_2 \left( e_{X,2},e_{2} \right)=\alpha_{2}\geq \frac{1}{1+ 16\varepsilon_{2}}>1-17\varepsilon_{2}.
\]
Therefore, by
$\left\|( \alpha_{2}e_{X,2},\alpha_{2}e_{2})- ( v'_{X,0},v'_{0})\right\|< \varepsilon_{2}$, we obtain that
$
p_2(( \alpha_{2}e_{X,2},\alpha_{2}e_{2})- ( v'_{X,0},v'_{0}))<2 \varepsilon_{2}.
$
Therefore, by $p_2 \left( \alpha_{2}e_{X,2},\alpha_{2}e_{2} \right)>1-17\varepsilon_{2}$, we get that
\begin{eqnarray*}
p_2\left(( v'_{X,0},v'_{0})\right) &\geq& p_2\left((\alpha_{2}e_{X,2},\alpha_{2}e_{2})\right)-p_2\left(( \alpha_{2}e_{X,2},\alpha_{2}e_{2})- ( v'_{X,0},v'_{0})\right)
\\
&\geq & (1-17\varepsilon_{2})-2\varepsilon_{2}=   1-19\varepsilon_{2}.
\end{eqnarray*}
Hence we obtain that $p_2 \left( v'_{X,0},v'_{0}\right) \geq 1-19\varepsilon_{2}$.
Moreover, by $p_2\left( u_{X,0},u_{0}\right)\leq1$ and $( v'_{X,0},v'_{0})\in \{   \alpha\cdot( u_{X,0},u_{0}): \alpha\in R^{+} \}$, we have the following inequalities
\[
p_2\left((u_{X,0},u_{0})-( v'_{X,0},v'_{0})\right) =  p_2\left(u_{X,0},u_{0}\right)- p_2\left( v'_{X,0},v'_{0}\right)\leq 1-(1-19\varepsilon_{2})=19\varepsilon_{2}.
\]
This implies that $\|(u_{X,0},u_{0})-( v'_{X,0},v'_{0})\|\leq 19\varepsilon_{2}$. Since $\|( \alpha_{2}e_{X,2},\alpha_{2}e_{2})- ( v'_{X,0},v'_{0})\|$ $< \varepsilon_{2}$, by the triangle inequality, we obtain that
\[
 \left\|\left( \alpha_{2}e_{X,2},\alpha_{2}e_{2}\right)- \left( u_{X,0},u_{0}\right)\right\|\quad \quad  \quad    \quad \quad\quad \quad  \quad  \quad   \quad
\]
\[
\quad \quad  \quad    \quad \quad\leq~ \left\|(\alpha_{2}e_{X,2},\alpha_{2}e_{2})- ( v'_{X,0},v'_{0}) \right\| ~+~ \left\|(u_{X,0},u_{0})-( v'_{X,0},v'_{0})\right\|\quad \quad  \quad \quad \quad \quad  \quad \quad   \quad \quad \quad \quad  \quad \quad   \quad \quad  \quad \quad \quad \quad  \quad \quad   \quad \quad\quad \quad  \quad \quad   \quad \quad \quad \quad  \quad \quad   \quad \quad
\]
\[
\quad \quad  \quad    \quad \quad\leq~ 19\varepsilon_{2}~+~ \varepsilon_{2} = 20 \varepsilon_{2}. \quad \quad  \quad \quad   \quad \quad \quad \quad  \quad \quad   \quad \quad  \quad \quad  \quad \quad  \quad \quad  \quad \quad   \quad \quad \quad \quad  \quad \quad  \quad \quad  \quad  \quad  \quad    \quad \quad \quad  \quad \quad \quad \quad \quad \quad \quad \quad  \quad \quad   \quad \quad\quad \quad  \quad \quad   \quad \quad \quad \quad  \quad \quad   \quad \quad
\]
Hence we obtain that $\|( \alpha_{2}e_{X,2},\alpha_{2}e_{2})- ( u_{X,0},u_{0})\|< 20 \varepsilon_{2}$.
We pick a point
\[
(e_X,e)\in S_{3}\left(X\times R\right)\cap M_0~~~~~\quad~~~~~ \mathrm{with}~~~~~\quad~~~~~~~ (e_X,e)\not\in  B\left((\alpha_2 e_{X,2},\alpha_2 e_{2}),25\varepsilon_2\right).
\]
Then we get that $(e_X,e)\notin N\left(x^{*}|_{M_{0}}\right)$ and $(e_X,e)\in N\left(x^{*}|_{M_{0}}\right)$, where
\[
N\left(x^{*}|_{M_{0}}\right)=\{x\in M_{0}: x^{*}|_{M_{0}}(x)=0\}=\{\xi\left( e_{X,2}, e_{2}\right)\in M_{0}:\xi\in R \}.~~
\]
Suppose that $(e_X,e)\notin N\left(x^{*}|_{M_{0}}\right)$.
Noticing that the space $M_0$ is  two-dimensional  and
$(e_X,e)\notin N\left(x^{*}|_{M_{0}}\right)$, we may assume   that
$\left\langle x^{*}|_{M_{0}}, \left( u_{X,0},u_{0}\right)\right\rangle>0 $ and $ \langle x^{*}|_{M_{0}}, $ $(e_X,e)\rangle>0$.
We know  that
$
L_{0}=\left\{(1-\xi)\cdot (\alpha_2 e_{X,2},\alpha_2 e_{2})+\xi\cdot( u_{X,0},u_{0}): \xi\geq 0 \right\}
$.
Then we pick a real number $\xi_1\in (1,+\infty) $ such that
\[
\left\|(1-\xi_1)\left(\alpha_2 e_{X,2},\alpha_2 e_{2}\right)+\xi_1\left( u_{X,0},u_{0}\right)-\left( u_{X,0},u_{0}\right)\right\|=4\varepsilon_{2}.
\]
Define $\left( w_{X,0},w_{0}\right)=\left(1-\xi_1\right)\left(\alpha_2 e_{X,2},\alpha_2 e_{2}\right)+\xi_1\left( u_{X,0},u_{0}\right)$.
Then, from the proof of Lemma 2.4, there exists a point $( u_{X,0}',u_{0}')$ with
$
( u_{X,0}',u_{0}')\in \{  (1- \xi)\left( \lambda_{2}e_{X,2},\lambda_{2}e_{2}\right)+ \xi \left( v_{X,0},v_{0}\right):  \xi \in [0,+\infty)    \}
$
such that $( u_{X,0}',u_{0}')\in \{ \lambda  \cdot  \left( w_{X,0},w_{0}\right):  \lambda  \in [0,1]         \}$.
Then, from the previous proof, we get that $\|(\alpha_2 e_{X,2},\alpha_2 e_{2})-( u_{X,0}',u_{0}')\|\geq\varepsilon_2$.
Then, by
the proof of (a) of Case I, it is easy to see that
\[
\left(\left.\mathrm{\frac{d^{+}}{d\xi}}\mu_{  A_2 } \left[(1-\xi)\left( \alpha_{2}e_{X,2},\alpha_{2}e_{2}\right)+\xi \left( w_{X,0},w_{0}\right)\right]\right)\right|_{\xi=0}>\frac{1}{4}\varepsilon_{2}.~~~~~~~~~~~~~~\eqno~~~~~~~~~~~~~~~~~~~~~~~~~~~~~~~~~~~~  (2.18)
\]
Define the continuous convex function $h\left(\xi\right)$ such that $h\left(\xi\right)=h_{1}(\xi)+h_{2}(\xi)$, where
\[
h_{1}\left(\xi\right)=p_2\left[ (1-\xi)\left( \alpha_{2}e_{X,2},\alpha_{2}e_{2}\right)+\xi \left( w_{X,0},w_{0}\right)\right],
\]
\[
h_{2}\left(\xi\right)= 16\varepsilon_{2}\cdot\mu_{  A_2 }\left[(1-\xi)\left( \alpha_{2}e_{X,2},\alpha_{2}e_{2}\right)+\xi \left( w_{X,0},w_{0}\right)\right].
\]
Then there exists $\xi_2\geq 0$ such that $( u_{X,0},u_{0})=(1-\xi_2)( \alpha_{2}e_{X,2},\alpha_{2}e_{2})+ \xi_2( w_{X,0},w_{0})$.
Hence we get that
$h_1\left(\xi\right)$ and  $h_2\left(\xi\right)$ are increasing on the interval $\left[\xi_2,+\infty\right)$. Since convex functions $h_1(\xi)$ and  $h_2(\xi)$ are increasing on $[\xi_2,+\infty)$, by the formula (2.18), similar to the proof of (a) of Case I, we obtain that
\[
p_{3}\left[ (1-\xi)\left( \alpha_{2}e_{X,2},\alpha_{2}e_{2}\right)+ \xi\left( w_{X,0},w_{0}\right)\right]> 1+3\varepsilon_2^{4}
\]
whenever $\left\|(1-\xi)(\alpha_{2}e_{X,2},\alpha_{2}e_{2})+ \xi( w_{X,0},w_{0})-( u_{X,0},u_{0})\right\|\geq 2\varepsilon_2$ and $\xi\geq \xi_2$. So
the function $f_3\left(\xi\right)$ is increasing on the interval $[\xi_2,+\infty)$, where
\[
f_3\left(\xi\right)=p_{3}\left[    (1-\xi)\left(\alpha_2 e_{X,2},\alpha_2 e_{2}\right)+\xi\left( w_{X,0},w_{0}\right)       \right].
\]
Therefore, from the proof of (b) of Case I, it is well known that
\[
\sigma_{\partial f}\left((x,y),(\alpha_{2}e_{X,2},\alpha_{2}e_{2})\right)- \sigma_{\partial f}\left((x,y),( u_{X,0},u_{0})\right)\geq \varepsilon_{2}^{20}~~\quad~~~\mathrm{for}~~~~\mathrm{every}~~~\quad~~~(x,y)\in U_3.
\]
Noticing that $(e_X,e)\not\in  B\left(\left(\alpha_2 e_{X,2},\alpha_2 e_{2}\right),25\varepsilon_2\right)$ and $\left\|\left( \alpha_{2}e_{X,2},\alpha_{2}e_{2}\right)- ( u_{X,0},u_{0})\right\|< 20 \varepsilon_{2}$,
similar to the proof of (c) of Case I, we obtain that
\[
(e_X,e) \in \overline{\mathrm{co}}\left( L_{0}\cup \left\{  \xi\cdot(\alpha_2 e_{X,2},\alpha_2 e_{2}): \xi\leq 1  \right\}\right).
\]
Similar to the proof of (c) of Case I, we have the following inequality
\[
\sigma_{\partial f}\left((x,y),(\alpha_{2}e_{X,2},\alpha_{2}e_{2})\right)\geq \sigma_{\partial f}\left((x,y),(e_X,e) \right)~~\quad~~~\mathrm{for}~~~~\mathrm{every}~~~\quad~~~ (x,y)\in U_3.
\]
Suppose that
$(e_X,e)\in N\left(x^{*}|_{M_{0}}\right)$. Similar to the proof of Case I, we get that
\[
s_3\geq s_3'\geq \sup\left\{ \sigma_{\partial f} \left((x,y),(e_X,e)\right): (x,y)\in U_{3} \right\}~~~\quad~~~~~ \mathrm{whenever}~~~~~\quad~~~(e_X,e)\in N\left(x^{*}|_{M_{0}}\right).
\]
Therefore, by the Case I and Case II, we have the following formula
\begin{eqnarray*}
s_{3}&=&\sup\left\{ \sigma_{\partial f}\left ((x,y),(e_{X},e)): ((x,y),(e_{X},e)\right)\right.
\\
&\in& U_{3}\times B \left( \left(\alpha_{2}e_{X,2},\alpha_{2}e_{2}\right),25\varepsilon_{2} \right) \cap B_{3}\left(X\times R\right) \}.
\end{eqnarray*}
Hence  the formula (2.9) is true. We next continue with the proof of the Theorem.

\par Since $X$ is a weak Asplund space, by Lemma 2.2 and Lemma 2.3, there exists a dense open cone sequence $\{O_{n}^{3}\}_{n=1}^{\infty}$ of $G_3$
so that $\mu_{ A_2 }$ is G$\mathrm{\hat{a}}$teaux differentiable on the set $\cap_{n=1}^{\infty}O_{n}^{3}$ and $O_{n+1}^{3}\subset O_{n}^{3}$, where
\begin{eqnarray*}
G_3 &=& \left\{  \lambda  \left(x, f_3(x)\right) \in X\times R :   x\in T \left( \overline{\mathrm{co}}\left(S_{3}\left(X\times R\right)\right) \right),~  \lambda\in (0,+\infty)     \right\}
\\
&& \cup\left\{  \lambda  \left(x, g_3(x)\right)\in X\times R :   x\in T \left( \overline{\mathrm{co}}\left(S_{3}\left(X\times R\right)\right) \right),~  \lambda\in (0,+\infty)      \right\},
\end{eqnarray*}
\[
f_{3}\left(x\right)= \inf \left\{ r\in R: (x,r) \in    \overline{\mathrm{co}}\left(S_{3}\left(X\times R\right)\right)         \right\}
\]
and
\[
g_{3}\left(x\right)= \sup \left\{ r\in R: (x,r) \in    \overline{\mathrm{co}}\left(S_{3}\left(X\times R\right)\right)         \right\}.
\]
We pick a real number $\varepsilon_3=\varepsilon_2/128 $. Then we obtain that $(1-16\varepsilon_1)\left(1-16\varepsilon_2\right)(1$ $-16\varepsilon_3)>3/4$. Noticing that $s_3'=\sup \left\{  \sigma_{\partial f}\left((x,y),( \alpha_2e_{X,2}, \alpha_2e_{2})\right):  (x,y)\in U_3              \right\}$,
there exists a real number $r_3\in (0,r_2/4)$ and
two points $(x_{3},y_{3})\in U_3$,
$(e_{X,3},e_{3})\in \left( \cap_{n=1}^{\infty}O_{n}^{1} \right) \cap\left( \cap_{n=1}^{\infty}O_{n}^{2} \right)\cap\left( \cap_{n=1}^{\infty}O_{n}^{3} \right)\subset X\times R
$ with
\[
p_3\left(e_{X,3},e_{3}\right)=1~~\quad~~~~~\mathrm{and}~~~~~~~~\quad~~\sigma_{\partial f}\left((x,y),(e_{X,3},e_{3})\right)\geq\left(1-\frac{1}{16}\varepsilon_{3}^{32}\right) s_{3}'>0
\]
for every $(x,y)\in B\left((x_{3},y_{3}),r_3\right)\subset U_3$
such that
\par (1) the mapping $(e_{X,3},e_{3})(\partial f)$ is single-valued at the point $(x_{3},y_{3})\in X\times R$;
\par (2) $p_3 \left(( \alpha_{2}e_{X,2},\alpha_{2} e_{2})- (e_{X,3},e_{3})   \right)   \leq \eta_2 / 50 $;
\par (3) $T\left(e_{X,3},e_{3}\right)\in \mathrm{int} T\left \{ (x,y)\in X\times R: p_3 \left(x,y\right) \leq 1    \right\}$.
\\
Let $V_3=\mathrm{int}B\left((x_{3},y_{3}\right),r_3)$. Then we get that $V_3$ is open.
Since $ B((e_{X,2},e_{2}),256\eta_2 )$ $\subset  \left( O_{2}^{1} \cap O_{2}^{2}\right)$ and the set $O_{i}^{j}$
is an
open cone,  by the inequality $p_3 (( \alpha_{2}e_{X,2},\alpha_{2} e_{2})- (e_{X,3},e_{3})  )   \leq \eta_2  / 50$
and  $\alpha_{2}\in [3/4,1]$,
we have the following formula
\[
(e_{X,3},e_{3})\in B\left(( \alpha_{2}e_{X,2}, \alpha_{2}e_{2}),\frac{1}{2}\eta_2\right)\subset B\left(\left( \alpha_{2}e_{X,2}, \alpha_{2}e_{2}\right),25\eta_2\right)\subset \left( O_{2}^{2} \cap O_{2}^{1} \right).
\]
Therefore, by the formula $(e_{X,3},e_{3})\in \left( \cap_{n=1}^{\infty}O_{n}^{1} \right) \cap\left( \cap_{n=1}^{\infty}O_{n}^{2} \right)\cap\left( \cap_{n=1}^{\infty}O_{n}^{3} \right)
\subset X\times R$, there exists a real number $\eta_3\in \left(0,\min \{\eta_2/128, \varepsilon_2/128\}\right)$ such that
\[
 B\left ((e_{X,3},e_{3}),256\eta_3\right)\subset \left(  O_{3}^{1} \cap  O_{3}^{2} \cap  O_{3}^{3} \right)\subset X\times R.~~~~~~~~~~~~~~\eqno~~~~~~~~~~~~~~~~~~~~~~~~~~~~~~~~~~~~  (2.19)
\]
Moreover, from the previous proof, we have proved the following inequality
\[
\left\langle (x^{*},y^{*}), ( e_{X,2}, e_{2}) \right\rangle > \sup \left\{  \sigma_{\partial f}\left((x,y),( e_{X,2}, e_{2})\right):  (x,y)\in U_3              \right\}-\frac{1}{2}\varepsilon_2^{32}
\]
for all $(u,v)\in U_3$ and $(x^{*},y^{*})\in \partial f(u,v)$. Therefore, by  $ \alpha_2\in(0,1]$ and the above inequality, we obtain  the following inequality
\[
\left\langle (x^{*},y^{*}), ( \alpha_2e_{X,2},\alpha_2 e_{2}) \right\rangle > \sup \left\{  \sigma_{\partial f}\left((x,y),( \alpha_2e_{X,2}, \alpha_2e_{2})\right):  (x,y)\in U_3              \right\}-\frac{1}{2}\varepsilon_2^{32}
\]
for every $(u,v)\in U_3$ and $(x^{*},y^{*})\in \partial f(u,v)$.
Noticing that $\partial f\left(U_{1}\right) \subset B_{1}\left(X^{*}\times R\right)$, by the formula (2.9), it is easy to see that $s_3-25\varepsilon_2 \leq s_3'$. Since
\[
s_3'=\sup \left\{  \sigma_{\partial f}\left((x,y),( \alpha_2e_{X,2}, \alpha_2e_{2})\right):  (x,y)\in U_3              \right\},
\]
we obtain that $\left\langle (x^{*},y^{*}), ( \alpha_2e_{X,2},\alpha_2 e_{2}) \right\rangle > s_3' - 2^{-1}\varepsilon_2^{32} $. Therefore, by $s_3-25\varepsilon_2 \leq s_3'$ and
$\left\langle (x^{*},y^{*}), ( \alpha_2e_{X,2},\alpha_2 e_{2}) \right\rangle > s_3' - 2^{-1}\varepsilon_2^{32} $, we get that
\[
\left\langle (x^{*},y^{*}), ( \alpha_2e_{X,2},\alpha_2 e_{2}) \right\rangle > s_3' -\frac{1}{2}\varepsilon_2^{32}   \geq s_3 -\frac{1}{2}\varepsilon_2^{32}-25\varepsilon_2
\]
for every $(u,v)\in U_3$ and $(x^{*},y^{*})\in \partial f(u,v)$. Moreover, since $\partial f$ is norm-to-weak$^{*}$ upper-semicontinuous and  the mapping $( e_{X,3}, e_{3})\partial f$ is single-valued at the point $(x_3,y_3)\in X\times R$, we may assume without loss of generality that
\[
\left\langle (x^{*},y^{*}), ( e_{X,3}, e_{3}) \right\rangle > \sup \left\{  \sigma_{\partial f}\left((x,y),( e_{X,3}, e_{3})\right):  (x,y)\in V_3                 \right\}-\frac{1}{8}\varepsilon_3^{32}
\]
for every $(u,v)\in V_3$ and $(x^{*},y^{*})\in \partial f\left(u,v\right)$. Moreover,
the player $A$ may choose
any nonempty open subset $U_4 \subset V_3$. Then, by the previous proof,
we can assume without loss of generality that
\[
\sup \left\{ \left \|(x^{*},y^*)\right\|\in R :   (x^{*},y^*)\in \partial f\left(U_{4}\right)      \right\}>0.
\]

\textbf{Step 3.} In this step, we prove that if the conclusion of Step 2 holds for natural number $k$, then the conclusion of Step 2 holds for natural number $k+1$.

\par Define the Minkowski functional $p_k= p_{k-1}+16\varepsilon_{k-1} \cdot \mu_{ A_{k-1}}$.
Let $\varepsilon_{k}=\varepsilon_{k-1}/128$. Then, by the inequality $\prod_{i=0}^{\infty}\left[1-(20\varepsilon_{1}/128^{i})\right]>3/4$,  we have $\prod_{i=1}^{k}(1+16\varepsilon_i)^{-1}>3/4$.
We know that
\[
s_{k}'= \sup \left\{  \sigma_{\partial f}\left((x,y),( \alpha_{k-1} e_{X,k-1},  \alpha_{k-1}e_{k-1})\right):  (x,y)\in U_{k}             \right\}.
\]
Similar to the proof of Step 2,
there exists a real number $r_k\in (0,r_{k-1}/4)$ and
two points $(x_{k},y_{k}) \in U_k$, $(e_{X,k},e_{k})\in \left( \cap_{n=1}^{\infty}O_{n}^{1} \right) $ $\cap\left( \cap_{n=1}^{\infty}O_{n}^{2} \right)\cap  \cdot\cdot\cdot  \cap\left( \cap_{n=1}^{\infty}O_{n}^{k} \right)
$  with
\[
p_k\left(e_{X,k},e_{k}\right)=1~~\quad~~~~~\mathrm{and}~~~~~~~~\quad~~\sigma_{\partial f}\left((x,y),(e_{X,k},e_{k})\right)>\left(1-\frac{1}{16}\varepsilon_{k}^{32}\right) s_{k}'>0
\]
for every $(x,y)\in B\left((x_{k},y_{k}),r_k\right)\subset U_{k}$ such that
\par (1) the mapping $(e_{X,k},e_{k})(\partial f)$ is single-valued at the point $(x_{k},y_{k})\in X\times R$;
\par (2) $p_{k} \left(\alpha_{k-1}\cdot(e_{X,k-1}, e_{k-1})- (e_{X,k},e_{k})   \right)   \leq \eta_{k-1}/50$;
\par (3) $T\left(e_{X,k},e_{k}\right)\in \mathrm{int} T\left\{ (x,y)\in X\times R: p_k \left(x,y\right) \leq 1    \right\}$.
\\
Define the set $\mathrm{int}B\left((x_{k},y_{k}),r_k\right)=V_k$. Then we obtain that $V_k$ is an open set and $\overline{V_k}\subset U_{k}$.
 We know that the mapping $\partial f$ is a norm-to-weak$^{*}$ upper-semicontinuous mapping and the mapping $(e_{X,k},e_{k})(\partial f)$ is a single-valued mapping at the point $(x_{k},y_{k})\in X\times R$. Similar to the proof of Step 2, we may assume without loss of generality that
\[
\left\langle (x^{*},y^{*}), ( e_{X,k}, e_{k}) \right\rangle > \sup \left\{  \sigma_{\partial f}\left((x,y),( e_{X,k}, e_{k})\right):  (x,y)\in V_{k}              \right\}-\frac{1}{2}\varepsilon_k^{32}
\]
for each $(u,v)\in V_{k}$ and $(x^{*},y^{*})\in \partial f(u,v)$. Similarly, we define the set $U_{k+1}\subset V_{k}$.
Therefore, by the formula $U_{k+1}\subset V_{k}$, we get that
\[
\left\langle (x^{*},y^{*}), ( e_{X,k}, e_{k}) \right\rangle > \sup \left\{  \sigma_{\partial f}\left((x,y),( e_{X,k}, e_{k})\right):  (x,y)\in U_{k+1}             \right\}-\frac{1}{2}\varepsilon_k^{32}
\]
for every $(u,v)\in U_{k+1}$ and $(x^{*},y^{*})\in \partial f(u,v)$. By the hypothesis, we get that
\begin{eqnarray*}
\quad \quad s_{k}&=&\sup\left\{ \sigma_{\partial f} \left((x,y),(e_{X},e)): ((x,y),(e_{X},e)\right)\right.
\\
&\in& \left.U_{k+1}\times B \left( \left(\alpha_{k-1}e_{X,k-1},\alpha_{k-1}e_{k-1}\right),25\varepsilon_{k-1} \right) \cap B_{k}\left(X\times R\right) \right\}. \quad \quad ~                (2.20)
\end{eqnarray*}
We next will prove that the conclusion of Step 2 holds for natural number $k+1$.
First, we define the closed set $C_k$, where
\[
C_k=\left\{  \left(\alpha e_{X,k},\alpha e_{k}\right):  0\leq \alpha \leq  1+\varepsilon_k^{16}      \right\} \cup \left\{ (x,y):    p_k\left(x,y\right)\leq \frac{1}{512^{3}}     \right\} . \quad
\]
Therefore, by the definition of $C_k$, we define the non-convex functional $\mu_{C_k}$, where
\[
\mu_{C_k }(x,y)= \inf \left\{  \lambda\in R^{+} :  \frac{1}{\lambda}\left(x,y\right)\in   C_k           \right\}
\]
for every $(x,y)\in X\times R$. Moreover, we define the functional $p_{k+1}'$, where
\[
p_{k+1}'\left(x,y\right)=p_k\left(x,y\right)+16\varepsilon_{k} \cdot\mu_{ C_k}(x,y)~\quad~~~~~\mathrm{for}~~~~~~~~~\mathrm{every}~~~~~~~\quad~~(x,y)\in X\times R.
\]
Therefore, by $ p_k\left( e_{X,k}, e_{k}\right)=1$, there exists a real number $\alpha_k\in (0,1) $ such that
\[
p_k\left(\alpha_k e_{X,k},\alpha_k e_{k}\right)+ 16 \varepsilon_{k}\cdot \mu_{C_k}\left(\alpha_k e_{X,k},\alpha_k e_{k}\right)= p_{k+1}'\left(\alpha_k e_{X,k},\alpha_k e_{k}\right)=1.
\]
Similarly, we obtain that $\alpha_k>(1+16 \varepsilon_{k})^{-1}$. Hence we
define the set $S'_{k+1}\left(X\times R\right)$ and a real number $s_{k+1}'>0$, where
\[
S'_{k+1}\left(X\times R\right)=\left\{(x, y)\in X\times R: p_{k+1}'\left(x,y\right)= p_k\left(x,y\right)+16\varepsilon_k \cdot \mu_{ C_k }(x,y)=1\right\}
\]
and
\[
s'_{k+1}=\sup\left\{ \sigma_{\partial f} \left((x,y),(e_{X},e)\right): ((x,y),(e_{X},e))\in U_{k+1}\times S'_{k+1}\left(X\times R\right) \right\}.
\]
We next will prove that the formula
\[
s_{k+1}'= \sup \left\{  \sigma_{\partial f}\left((x,y),( \alpha_k e_{X,k},  \alpha_ke_{k})\right):  (x,y)\in U_{k+1}             \right\}
\]
holds. In fact, noticing that $\partial f\left(U_{1}\right) $ is a subset of $ B_1\left(X^{*}\times R\right)$ and
\[
s_{k}'= \sup \left\{  \sigma_{\partial f}\left((x,y),( \alpha_{k-1} e_{X,k-1},  \alpha_{k-1}e_{k-1})\right):  (x,y)\in U_{k}             \right\},
\]
by the formula (2.20), it is easy to see that $ s_{k}'\geq s_{k}-25\varepsilon_{k-1}$. We know that
\[
\sup \left\{  \sigma_{\partial f}\left((x,y),(  e_{X,k},  e_{k})\right):  (x,y)\in U_{k+1}             \right\}\geq \left(  1-\frac{1}{16} \varepsilon_k^{32}   \right)s_k'.
\]
Noticing that $ s_{k}'\geq s_{k}-25\varepsilon_{k-1}$ and $\varepsilon_{k}=\varepsilon_{k-1}/128$, by the above inequality and $\varepsilon_{1}\in \left(0,1/512^{6}\right)$, we have the following inequalities
\[
\sup \left\{  \sigma_{\partial f}\left((x,y),(  e_{X,k},  e_{k})\right):  (x,y)\in U_{k+1}             \right\} \quad
\]
\[
 \quad  \quad\quad\quad \quad \quad\geq~\left(  1-\frac{1}{16} \varepsilon_k^{32}   \right)s_k' \geq \left(  1-\frac{1}{16} \varepsilon_k^{32} \right) \left(s_{k}-25\varepsilon_{k-1}\right)\quad \quad \quad \quad \quad \quad \quad \quad \quad  \quad \quad \quad \quad \quad \quad \quad \quad \quad \quad \quad \quad \quad \quad \quad
\]
\[
 \quad\quad \quad\quad \quad \quad \geq~ s_{k}~-~ \left(25\varepsilon_{k-1}+\frac{s_k}{16} \varepsilon_k^{32} +\frac{25}{16} \varepsilon_k^{32}\cdot\varepsilon_{k-1}\right)   \quad\quad \quad\quad \quad \quad \quad\quad \quad\quad \quad \quad \quad\quad \quad\quad \quad \quad \quad\quad \quad\quad \quad \quad \quad\quad \quad\quad \quad \quad \quad\quad \quad\quad \quad \quad
 \]
\[
 \quad\quad \quad\quad \quad \quad  \geq~
  s_{k}~-~ 30\varepsilon_{k-1}. \quad \quad \quad \quad \quad \quad \quad \quad \quad  \quad \quad \quad \quad \quad \quad \quad \quad \quad \quad \quad \quad \quad \quad \quad \quad \quad \quad  \quad \quad \quad \quad \quad \quad \quad \quad \quad \quad \quad \quad \quad \quad  \quad \quad \quad \quad \quad\quad \quad \quad \quad \quad \quad \quad \quad \quad \quad
\]
Moreover, from the previous proof, we obtain that $\alpha_k>\left(1+16 \varepsilon_{k}\right)^{-1}$. Therefore, by the above inequalities and $\alpha_k>(1+16 \varepsilon_{k})^{-1}$, we get that
\[
 \sup \left\{  \sigma_{\partial f}\left((x,y),(  \alpha_k e_{X,k}, \alpha_k e_{k})\right):  (x,y)\in U_{k+1}             \right\} \geq \frac{1}{1+16\varepsilon_{k}} \left(s_{k}- 30\varepsilon_{k-1}\right).
\]
Moreover, we define the closed convex set $B_{k,0} \left(X\times R\right)$, where
\[
B_{k,0} \left (X\times R\right)= \left\{   (x, y)\in X\times R:      p_k\left(x,y\right) +\left(16   \varepsilon_{k} \cdot   512^{3} \right) p_k\left(x,y\right)\leq 1           \right\}.
\]
Noticing that $s_{k}=\sup\left\{ \sigma_{\partial f} ((x,y),(e_{X},e)): ((x,y),(e_{X},e))\in U_{k}\times S_{k}(X\times R) \right\}$, by the definitions of $B_{k,0} \left (X\times R\right)$ and $s_k$, we get that
\[
 \frac{s_{k}}{1+16(512)^{3}\varepsilon_{k}} =\sup\left\{ \sigma_{\partial f} ((x,y),(e_{X},e)): ((x,y),(e_{X},e))\in U_{k}\times B_{k,0}  (X\times R) \right\}.
\]
Therefore, by the formula $U_{k+1}\subset U_{k}$ and the above equality, we get that
\[
 \frac{s_{k}}{1+16(512)^{3}\varepsilon_{k}} \geq \sup\{ \sigma_{\partial f} ((x,y),(e_{X},e)): ((x,y),(e_{X},e))\in U_{k+1}\times B_{k,0}  (X\times R) \}.
\]
Noticing that $s_1=1$, by $\varepsilon_{k+1}=\varepsilon_k/128$ and  $\varepsilon_{1}\in \left(0,1/512^{6}\right)$, it is easy to see that $s_{k}>7/8$.
Moreover, by $\varepsilon_{k+1}=\varepsilon_k/128$ and  $\varepsilon_{1}\in \left(0,1/512^{6}\right)$, we get that
\[
\frac{1}{1+16\varepsilon_{k}} \left(s_{k}- 30\varepsilon_{k-1}\right)\geq\frac{1}{1+16\varepsilon_{k}} \left(s_{k}- 30\cdot128\varepsilon_{k}\right)\geq     \frac{1}{1+16(512)^{3}\cdot\varepsilon_{k}} s_{k}.
\]
Therefore, by the above inequalities and the definition of $B_{k,0} \left (X\times R\right) $, we have
\begin{eqnarray*}
 &&\sup \left\{  \sigma_{\partial f}\left((x,y),(  \alpha_k e_{X,k}, \alpha_k e_{k})\right):  (x,y)\in U_{k+1}             \right\}
 \\
 &\geq& \frac{1}{1+16\varepsilon_{k}} \left(s_{k}- 30\varepsilon_{k-1}\right)  \geq     \frac{1}{1+16(512)^{3}\cdot\varepsilon_{k}} s_{k}
\\
 &\geq& \sup\left\{ \sigma_{\partial f} \left((x,y),(e_{X},e)\right): ((x,y),(e_{X},e))\in U_{k+1}\times B_{k,0}  \left(X\times R\right) \right\}.
\end{eqnarray*}
Since $p_{k+1}'= p_k(x,y)+16\varepsilon_k \cdot \mu_{ C_k }(x,y)$ for every $(x,y)\in X \times R$,
by the definition of $C_k$ and the above inequalities, it is easy to see that
\[
s_{k+1}'= \sup \left\{  \sigma_{\partial f}\left((x,y),( \alpha_k e_{X,k},  \alpha_ke_{k})\right):  (x,y)\in U_{k+1}             \right\}.
\]
Moreover, we define the closed set $D_k'$, where
\[
D_k'=\left\{ \left( \alpha e_{X,k}, \alpha e_{k}\right):  0\leq \alpha \leq  1+\varepsilon_2^{16}     \right\} \cup \left\{ (x,y)\in X\times R:    p_k\left(x,y\right)\leq 1    \right\} .
\]
Therefore, by the definition of $D_k'$, we define the  functional $\mu_{D_k'}$, where
\[
\mu_{D_k'}\left(x,y\right)= \inf \left\{  \lambda\in R^{+} :  \frac{1}{\lambda}\left(x,y\right)\in  D_k'           \right\}
\]
for every $(x,y)\in X\times R$.
Hence we define the functional $p_{k+1}''$, where
\[
p_{k+1}''\left(x,y\right)=p_2\left(x,y\right)+16\varepsilon_{k} \cdot\mu_{D_k'}(x,y)~\quad~~~~~\mathrm{for}~~~~~~~~~\mathrm{every}~~~~~~~\quad~~(x,y)\in X\times R.
\]
This implies that $D_k= \overline{\mathrm{co}}\left\{ (e_{X},e): p_{k+1}' (e_{X},e)=1        \right\}$ is a closed convex set.
Hence we define the the Minkowski functional $\mu_{D_k}$. Let $A_k=  \overline{\mathrm{co}} \left(C_k\right)$. Then the set $A_k$ is a closed convex set.
Hence we define the Minkowski functional $\mu_{  A_k }$. Moreover, we define the two sets
\[
S_{k+1}\left(X\times R\right)=\left\{(x, y)\in X\times R: p_k\left(x,y\right)+16\varepsilon_k \cdot \mu_{A_k }(x,y)=1\right\}
\]
and
\[
B_{k+1}\left(X\times R\right)=\left\{(x, y)\in X\times R: p_k\left(x,y\right)+16\varepsilon_k \cdot \mu_{A_k }(x,y)\leq 1\right\}.
\]
Let $p_{k+1}\left(x, y\right)= p_k\left(x, y\right)+16\varepsilon_k \cdot \mu_{ A_k }\left(x, y\right)$ for every $(x, y)\in X\times R$. Then, by the definitions of $p_{k+1}$ and $p_{k+1}'$, it is easy to see that $p_{k+1}\left(\alpha_{k}e_{X,k},\alpha_{k}e_{k}\right)=1$. Hence we define
the real number $s_{k+1}\in (0,+\infty)$, where
\[
s_{k+1}=\sup\left\{ \sigma_{\partial f} \left((x,y),(e_{X},e)\right): ((x,y),(e_{X},e))\in U_{k+1}\times S_{k+1}\left(X\times R\right) \right\}.
\]
Similar to the proof of the formula (2.9) of Step 2, we have the following formula
\begin{eqnarray*}
~\quad~\quad\quad\quad~\quad s_{k+1}&=&\sup\left\{ \sigma_{\partial f} \left((x,y),(e_{X},e)\right): ((x,y),(e_{X},e))\right.
\\
&\in& \left.U_{k+1}\times B \left( \left(\alpha_{k}e_{X,k},\alpha_{k}e_{k}\right),25\varepsilon_{k} \right) \cap B_{k+1}\left(X\times R\right) \right\}.~\quad~\quad~\quad\quad~(2.21)
\end{eqnarray*}
Since the space $X$ is a weak Asplund space,  by Lemma 2.2 and  Lemma 2.3, we obtain that there exists a dense open cone sequence $\{O_{n}^{k+1}\}_{n=1}^{\infty}$ of $G_{k+1}$
such that $\mu_{A_k }$ is G$\mathrm{\hat{a}}$teaux differentiable on $\cap_{n=1}^{\infty}O_{n}^{k+1}$ and $O_{n+1}^{k+1}\subset O_{n}^{k+1}$, where
\[
G_{k+1} = \left\{  \lambda \left (x, f_{k+1}(x)\right) \in X\times R :   x\in T \left( \overline{\mathrm{co}}\left(S_{k+1}\left(X\times R\right)\right) \right),~  \lambda\in (0,+\infty)     \right\}\quad
\]
\[
\quad \quad\quad\quad  \cup\left\{  \lambda  \left(x, g_{k+1}(x)\right)\in X\times R :   x\in T \left( \overline{\mathrm{co}}\left(S_{k+1}(X\times R )\right) \right),~  \lambda\in (0,+\infty)      \right\},
\]
\[
f_{k+1}\left(x\right)= \inf \left\{ r\in R: (x,r) \in   \overline{\mathrm{co}}\left(S_{k+1}\left(X\times R\right)\right)         \right\}
\]
and
\[
g_{k+1}\left(x\right)= \sup \left\{ r\in R: (x,r) \in    \overline{\mathrm{co}}\left(S_{k+1}\left(X\times R\right)\right)         \right\}.
\]
Pick a real number $\varepsilon_{k+1}= \varepsilon_k/128  $. Noticing that  $\prod_{i=0}^{\infty}\left[1-(20\varepsilon_{1}/128^{i})\right]>3/4$,
we obtain that
$\prod_{i=1}^{k+1}(1+16\varepsilon_i)^{-1}>3/4$.
Moreover, we have proved that
\[
s_{k+1}'= \sup \left\{  \sigma_{\partial f}\left((x,y),( \alpha_k e_{X,k},  \alpha_ke_{k})\right):  (x,y)\in U_{k+1}             \right\}.
\]
Hence there exists a real number $r_{k+1}\in (0,r_k/4)$ and
two points $(x_{k+1},y_{k+1})\in U_{k+1}$,
$(e_{X,k+1},e_{k+1})$ $\in \cap _{i=1}^{k+1}\left( \cap_{n=1}^{\infty}O_{n}^{i} \right)
\subset X\times R$ with
\[
p_{k+1}\left(e_{X,k+1},e_{k+1}\right)=1~~\quad~~~~~\mathrm{and}~~~~~~~~\sigma_{\partial f}\left((x,y),(e_{X,k+1},e_{k+1})\right)\geq\left(1-\frac{1}{16}\varepsilon_{k+1}^{32}\right) s_{k+1}'
\]
for every $(x,y)\in B\left((x_{k+1},y_{k+1}),r_{k+1}\right)\subset U_{k+1}$
such that
\par (1) the mapping $(e_{X,k+1},e_{k+1})(\partial f)$ is single-valued at the point $(x_{k+1},y_{k+1})$;
\par (2) $p_{k+1} \left((\alpha_k e_{X,k}, \alpha_k e_{k})- (e_{X,k+1},e_{k+1})   \right)  \leq \eta_k/ 50$;
\par (3) $T\left(e_{X,k+1},e_{k+1}\right)\in \mathrm{int} T \left\{ (x,y)\in X\times R: p_{k+1} \left(x,y\right) \leq 1    \right\}$.
\\
Let $\mathrm{int}B\left((x_{k+1},y_{k+1}),r_{k+1}\right)=V_{k+1}$. Then we obtain that $V_{k+1}$ is an open set  and $\overline{V_{k+1}}\subset U_{k+1}$.
Since $O_{i}^{j}$
is an open cone and
$B\left(( e_{X,k},  e_{k}),256\eta_{k}\right)\subset( O_{k}^{1} \cap  O_{k}^{2} \cap \cdot\cdot\cdot  \cap O_{k}^{k}  )$,
by the inequalities
\[
 \left\|(\alpha_k e_{X,k}, \alpha_k e_{k})- (e_{X,k+1},e_{k+1}) \right\|  \leq   p_{k+1}\left ((\alpha_k e_{X,k}, \alpha_k e_{k})- (e_{X,k+1},e_{k+1})  \right )  \leq \frac{1}{50}\eta_k
\]
and $\alpha_{k}\in [3/4,1]$,
it is easy to see that
\[
(e_{X,k+1},e_{k+1})\in B\left(\left(\alpha_k  e_{X,k}, \alpha_k e_{k}\right),25\eta_{k}\right)\subset  \left( O_{k}^{1} \cap  O_{k}^{2} \cap \cdot\cdot\cdot  \cap O_{k}^{k}  \right).~~~\eqno~~~~~~~~~~~~~~~~~~~~~~~~~~(2.22)
\]
Therefore, by the formula $(e_{X,k+1},e_{k+1})\in \cap _{i=1}^{k+1}\left( \cap_{n=1}^{\infty}O_{n}^{i} \right)
\subset X\times R$, there exists a real number $\eta_{k+1}\in \left(0,\min \left\{\eta_k/128,\varepsilon_k/128\right\} \right)$ such that
\[
 B\left (\left(e_{X,k+1},e_{k+1}\right),256\eta_{k+1}\right)\subset \left(  O_{k+1}^{1} \cap  O_{k+1}^{2} \cap \cdot\cdot\cdot  \cap O_{k+1}^{k+1} \right).~~~\eqno~~~~~~~~~~~~~~~~~~~~~~~~~~(2.23)
\]
Moreover, from the previous proof, we have proved the following inequality
\[
\left\langle (x^{*},y^{*}), ( e_{X,k}, e_{k}) \right\rangle > \sup \left\{  \sigma_{\partial f}\left((x,y),( e_{X,k}, e_{k})\right):  (x,y)\in U_{k+1}             \right\}-\frac{1}{2}\varepsilon_k^{32}
\]
for every $(u,v)\in U_{k+1}$ and $(x^{*},y^{*})\in \partial f(u,v)$. Noticing that $ (1+ 16\varepsilon_k)^{-1}\leq \alpha_k\leq 1$ and the above inequality, we obtain that
\[
\left\langle (x^{*},y^{*}), ( \alpha_k e_{X,k}, \alpha_k e_{k}) \right\rangle > \sup \left\{  \sigma_{\partial f}\left((x,y),( \alpha_k e_{X,k},  \alpha_ke_{k})\right):  (x,y)\in U_{k+1}             \right\}-\frac{1}{2}\varepsilon_k^{32}
\]
for  every $(u,v)\in U_{k+1}$ and $(x^{*},y^{*})\in \partial f(u,v)$.
Noticing that $\partial f\left(U_{1}\right) \subset B_1(X^{*}\times R)$, by the formula (2.21) and the above inequality, it is easy to see that
$s_{k+1}'\geq s_{k+1} -25\varepsilon_k$. Therefore, from the previous proof and $s_{k+1}'\geq s_{k+1} -25\varepsilon_k$, we have
\begin{eqnarray*}
&&\left\langle (x^{*},y^{*}), ( \alpha_k e_{X,k}, \alpha_k e_{k}) \right\rangle
 \\
 &>& \left( \sup \left\{  \sigma_{\partial f}\left((x,y),( \alpha_k e_{X,k},  \alpha_ke_{k})\right):  (x,y)\in U_{k+1}             \right\}-\frac{1}{2}\varepsilon_k^{32} \right)
\\
& \geq&  s_{k+1}'-\frac{1}{2}\varepsilon_k^{32} \geq s_{k+1} -\frac{1}{2}\varepsilon_k^{32}-25\varepsilon_k
\end{eqnarray*}
for every $(u,v)\in U_{k+1}$ and $(x^{*},y^{*})\in \partial f(u,v)$.
Since the mapping
$\partial f$ is norm-to-weak$^{*}$ upper-semicontinuous and
the mapping $(e_{X,k+1},e_{k+1})(\partial f)$ is single-valued mapping at the point $(x_{k+1},y_{k+1})$, similar to the proof of Step 2, we get  that
\[
\left\langle (x^{*},y^{*}), ( e_{X,k+1}, e_{k+1}) \right\rangle > \sup \left\{  \sigma_{\partial f}\left((x,y),( e_{X,k+1}, e_{k+1})\right):  (x,y)\in V_{k+1}             \right\}-\frac{1}{2}\varepsilon_{k+1}^{32}
\]
for each $(u,v)\in V_{k+1}$ and $(x^{*},y^{*})\in \partial f\left(u,v\right)$. Moreover
the player $A$ may choose
any nonempty open subset $U_{k+2} \subset V_{k+1}$. Hence
we may assume that
\[
\sup \left\{  \|(x^{*},y^*)\|\in R :   (x^{*},y^*)\in \partial f\left(U_{k+2}\right)      \right\}>0.
\]
This implies that the conclusion of Step 2 holds for every natural
number $k\in N$.
Moreover, by the formula $r_k\in (0,r_{k-1}/4)$, we have the following inequalities
\[
0<r_k\leq \frac{1}{4}r_{k-1}\leq \frac{1}{4^{2}}r_{k-2}\leq\cdot\cdot\cdot\leq\frac{1}{4^{k-1}}r_1\leq\frac{1}{4^{k-1}}~\quad~~~~~\mathrm{for}~~~~~~~\mathrm{every}~~~~~~\quad~k\in N.
\]
Therefore, by $(x_{k+1},y_{k+1})\in \mathrm{int}B\left((x_{k},y_{k}),r_{k}\right)$, we get that $\|(x_{k+1},y_{k+1})-(x_{k},y_{k})\|$ $\leq r_k$ for every $k\in N$.
Therefore, by the inequality $0<r_k\leq {1}/{4^{k-1}}$, we get that
$\{(x_k,y_k)\}_{k=1}^{\infty}$ is a Cauchy sequence. Hence there exists a point $\left(x_0,y_0\right)$ in $X\times R$ such that
$\|(x_k,y_k)-(x_0,y_0)\| \to 0$ as $k\to \infty$. Therefore, by $\overline{V_k}\subset U_k\subset V_{k-1}$ and $0<r_k\leq {1}/{4^{k-1}}$, we get that $\cap_{k=1}^{\infty}V_k = \{(x_0,y_0)\}$.

\textbf{Step 4.}
We  first prove that the sequence $ \{(e_{X,k},e_{k})\}_{k=1}^\infty $ is a Cauchy sequence. In fact,
by the proof of Step 3, we get that
$p_{k+1} \left((\alpha_k e_{X,k}, \alpha_k e_{k})- (e_{X,k+1},e_{k+1})   \right)  \leq \eta_{k}/50 $. Therefore, by the definition of $p_k$, we have the following inequalities
\[
\|(\alpha_ke_{X,k}, \alpha_k e_{k})- (e_{X,k+1},e_{k+1})   \| \leq p_{k+1} \left((\alpha_k e_{X,k}, \alpha_k e_{k})- (e_{X,k+1},e_{k+1})   \right)   \leq \frac{1}{50}\eta_{k}
\]
for every $k\in N$. Since $\eta_{k+1}\in \left(0,\min \{\eta_k/128,\varepsilon_k/128\} \right)$,
by the above inequalities and the formulas (2.21)-(2.22), we have the following inequalities
\[
\left\| (e_{X,k+p},e_{k+p})  -  \left(\prod_{i=0}^{p-1}\alpha_{k+i}\right)\left( e_{X,k}, e_{k}\right)      \right\|~ \quad\quad\quad\quad\quad\quad\quad\quad\quad\quad\quad\quad\quad
\]
\[
\quad\leq~ \left\|   (e_{X,k+p},e_{k+p}) -\alpha_{k+p-1} \cdot(e_{X,k+p-1},e_{k+p-1})       \right\|\quad\quad\quad~\quad\quad\quad\quad\quad\quad\quad\quad\quad\quad\quad\quad\quad\quad
\]
\[
\quad\quad+ \left\| \alpha_{k+p-1} \cdot (e_{X,k+p-1},e_{k+p-1}) - \alpha_{k+p-2}\alpha_{k+p-1} \cdot (e_{X,k+p-2},e_{k+p-2})       \right\|\quad\quad\quad~\quad\quad\quad\quad\quad\quad\quad\quad\quad\quad\quad\quad\quad\quad\quad\quad\quad\quad\quad\quad\quad
\]
\[
\quad\quad\quad+~\cdot\cdot\cdot~+\left\| \left(\prod_{i=1}^{p-1}\alpha_{k+i}\right)(e_{X,k+1},e_{k+1})  -\left(\prod_{i=1}^{p-1}\alpha_{k+i}\right)\alpha_{k}\left( e_{X,k}, e_{k}\right)        \right\| \quad\quad\quad~\quad\quad\quad\quad\quad\quad\quad\quad\quad\quad\quad\quad\quad\quad\quad\quad\quad\quad\quad\quad\quad
\]
\[
\quad\leq~ \left\|   (e_{X,k+p},e_{k+p}) -  \alpha_{k+p-1} \cdot(e_{X,k+p-1},e_{k+p-1})      \right\|\quad\quad\quad~\quad\quad\quad\quad\quad\quad\quad\quad\quad\quad\quad\quad\quad\quad\quad\quad\quad\quad\quad\quad\quad\quad\quad
\]
\[
\quad\quad+ \left\|  \alpha_{k+p-2}\cdot (e_{X,k+p-2},e_{k+p-2}) - (e_{X,k+p-1},e_{k+p-1})       \right\|\quad\quad\quad\quad\quad\quad\quad\quad\quad\quad\quad\quad\quad\quad\quad\quad\quad\quad\quad\quad\quad\quad\quad\quad\quad\quad\quad\quad\quad\quad\quad\quad
\]
\[
\quad\quad\quad+~\cdot\cdot\cdot~+\left\| (e_{X,k+1},e_{k+1}) -  \alpha_{k}\left( e_{X,k}, e_{k}\right)       \right\|
\leq \frac{1}{50} \left( \sum_{j=0}^{p-1} \eta_{k+j} \right)~<~ \frac{1}{2}\eta_{k}\quad\quad\quad~(2.25)\quad\quad\quad\quad\quad \quad~\quad\quad\quad\quad\quad\quad\quad\quad\quad\quad\quad\quad\quad\quad\quad\quad\quad \quad\quad\quad\quad\quad\quad\quad\quad\quad\quad\quad\quad\quad\quad~\quad\quad\quad\quad
\]
for each $k\in N$ and $p\in N$.
Moreover, by the proof of Step 3, it is well known that $\alpha_k\geq (1+16 \varepsilon_{k})^{-1}$ for every $k\in N$.
Noticing that $\eta_{k+1}\in \left(0,\min \{\eta_k/128,\varepsilon_k/128\} \right)$ and $\left\|(e_{X,k},e_{k})\right\|\leq 1$, by the inequality $\left\| (\alpha_k e_{X,k},\alpha_ke_{k})-(e_{X,k+1},e_{k+1})          \right\|< \eta_{k}/ 50$, we have the following inequalities
\begin{eqnarray*}
0 &\leq & \left\| ( e_{X,k},e_{k})-(e_{X,k+1},e_{k+1})          \right\|
\\
&\leq& \left\| (\alpha_k e_{X,k},\alpha_k e_{k})-(e_{X,k},e_{k})          \right\| + \left\| (\alpha_k e_{X,k},\alpha_ke_{k})-(e_{X,k+1},e_{k+1})          \right\|
\\
&\leq& \left|  \alpha_k-1 \right|\cdot \left\|(e_{X,k},e_{k})\right\|+ \left\| (\alpha_k e_{X,k},\alpha_ke_{k})-(e_{X,k+1},e_{k+1})          \right\|
\\
&\leq & \left|  \alpha_k-1 \right|\cdot \left\|(e_{X,k},e_{k})\right\|+ \frac{1}{2}\eta_{k}
\\
&\leq&  20 \varepsilon_k +\frac{1}{2}\eta_{k} \leq 25 \varepsilon_{k-1}.
\end{eqnarray*}
Since $\varepsilon_{k+1}= \varepsilon_k/128 $ and  $0 \leq  \left\| ( e_{X,k},e_{k})-(e_{X,k+1},e_{k+1})          \right\|\leq  25 \varepsilon_{k-1}$, we get that
\begin{eqnarray*}
\left\| (e_{X,k},e_{k})-(e_{X,k+p},e_{k+p})          \right\|
 & = & \left\|\sum_{j=1}^{p} \left[\left(e_{X,k+j-1},e_{k+j-1}\right)-\left(e_{X,k+j},e_{k+j}\right) \right] \right\|
\\
&\leq& \left(\sum_{j=1}^{p}\left\| \left(e_{X,k+j-1},e_{k+j-1}\right)-\left(e_{X,k+j},e_{k+j}\right)  \right\| \right)
\\
& \leq &\sum_{j=0}^{\infty} \left(25   \varepsilon_{k+j-1} \right)= 25    \sum_{j=0}^{\infty}\frac{1}{128^{j}}\varepsilon_{k-1}
\leq 40\varepsilon_{k-1}
\end{eqnarray*}
whenever $k\in N$ and $p\in N$. Noticing that $\varepsilon_{k} \to 0$,
we obtain that the sequence
$ \{(e_{X,k},e_{k})\}_{k=1}^\infty $ is a Cauchy sequence. Hence there exists a point $ (e_{X,0},e_{0})\in X\times R$ such that
$\|(e_{X,k},e_{k}) - (e_{X,0},e_{0})\|  \to 0$ as $k\to \infty$. Therefore, by the formula (2.25) and the triangle inequality, we get that
\[
\left\| (e_{X,0},e_{0})    - \left(\prod_{i=0}^{\infty}\alpha_{k+i}\right)\left( e_{X,k}, e_{k}\right)      \right\|\quad \quad \quad \quad \quad \quad \quad \quad \quad \quad \quad \quad \quad \quad \quad \quad \quad
\]
\[
\leq\mathop {\lim }\limits_{p \to \infty }\left\| (e_{X,0},e_{0})    -(e_{X,k+p},e_{k+p}) \right\|+  \mathop {\lim }\limits_{p \to \infty }\left\| (e_{X,k+p},e_{k+p})   -\left(\prod_{i=0}^{p-1}\alpha_{k+i}\right)\left( e_{X,k}, e_{k}\right)       \right\|
\]
\[
 \quad +\mathop {\lim }\limits_{p \to \infty }\left\|\left(\prod_{i=0}^{p-1}\alpha_{k+i}\right)\left( e_{X,k}, e_{k}\right)  -\left(\prod_{i=0}^{\infty}\alpha_{k+i}\right)\left( e_{X,k}, e_{k}\right)      \right\|\quad \quad \quad \quad \quad \quad \quad \quad \quad \quad \quad \quad \quad \quad \quad
\]
\[
 \leq 0~+~\frac{1}{2}\eta_{k}~+~0~=~\frac{1}{2}\eta_{k}~~\quad~~~~\mathrm{for}~~~\mathrm{every}~~~~~~~~~~~~~\quad~~k\in N.\quad \quad \quad \quad \quad \quad \quad \quad \quad \quad \quad \quad \quad \quad \quad \quad \quad \quad \quad\quad \quad \quad \quad \quad \quad \quad \quad \quad \quad \quad \quad \quad \quad \quad
\]
Since $\prod_{i=0}^{\infty}\left[1-(20\varepsilon_{1}/128^{i})\right]>3/4$ and $\alpha_k\geq \left(1+16\varepsilon_k\right)^{-1}$, we have $\prod_{i=1}^{\infty}\alpha_{k+i}\in [3/4,1]$.
Moreover, since
$O_{i}^{j}$ is an open cone, by the formula (2.23), we have
\[
(e_{X,0},e_{0})\in  B\left (\left(\prod_{i=1}^{\infty}\alpha_{k+i}\right)( e_{X,k+1}, e_{k+1}), \frac{1}{2}\eta_{k+1}\right)\subset \left(  O_{k+1}^{1} \cap  O_{k+1}^{2} \cap \cdot\cdot\cdot  \cap O_{k+1}^{k+1} \right) \quad
\]
for every $k\in N$. Therefore, by the above formula, it is easy to see that
\[
(e_{X,0},e_{0})\in  \mathop {\cap }\limits_{k=1}^{\infty} B\left (\left(\prod_{i=1}^{\infty}\alpha_{k+i}\right)( e_{X,k},e_{k}),\frac{1}{2}\eta_{k+1}\right)
\subset \mathop {\cap}\limits_{k=1}^{\infty}\left(\mathop {\cap}\limits_{j=1}^{k+1}O_{k+1}^{j}\right).
\]
Therefore, by the definition of $p_{k}$, we obtain that for every $k\in N$, $p_{k}$ is
G$\mathrm{\hat{a}}$teaux differentiable at the point $(e_{X,0},e_{0})\in X\times R$. Hence we
define the functional
\[
p_0\left(x,y\right)=p_2\left(x,y\right)+  16\left(\sum_{k=2}^{\infty} \varepsilon_k\cdot \mu_{ A_k }\left(x,y\right)\right)~~\quad~~\mathrm{for}~~~~~~~\mathrm{every}~~~~~~~~\quad~~(x,y)\in X\times R.
\]
We claim that $p_0$ is a Minkowski functional on $X\times R$. In fact, by the definition of $A_k$,
there exists a real number $m_{x,y}\in (0,+\infty)$ such that $\mu_{ A_k }(x,y)<m_{x,y}$ for every $k\in N$.
Noticing that $\varepsilon_{1}\in \left(0,1/512^{6}\right)$ and
$\varepsilon_{k}=\varepsilon_{k-1}/128$, by the definition of $A_k$, it is easy to see that  $\sum_{k=2}^{\infty} 16\varepsilon_k\cdot \mu_{ A_k }\left(x,y\right)<+\infty$ for every $(x,y)\in X\times R$.
Hence, for every $(x,y)\in X\times R$,
we have the following formula
\[
p_0\left(x,y\right)=p_2\left(x,y\right)+  16\left(\sum_{k=2}^{\infty} \varepsilon_k\cdot \mu_{ A_k }\left(x,y\right)\right)<+\infty~~\quad~~\mathrm{for}~~~~~~~\mathrm{all}~~~~~~~~\quad~~(x,y)\in X\times R.
\]
Noticing that $p_0\left(\lambda x, \lambda y\right)=\lambda p_0\left(x,y\right)$ for every $\lambda\geq 0$ and $(x,y)\in X\times R$,
we get that
$p_0$ is a Minkowski functional. Moreover, it is easy to see that
\[
(0,0)\in \mathrm{int} \left\{(x, y)\in X\times R   :p_0\left(x,y\right)\leq1\right\}.
\]
Hence we get that $p_0$ is a continuous Minkowski functional on $X\times R$. Let
\[
S\left(X\times R\right)=\left\{(x, y): p_0\left(x,y\right)=p_2\left(x,y\right)+  16\left(\sum_{k=2}^{\infty} \varepsilon_k\cdot \mu_{ A_k }\left(x,y\right)\right)=1\right\}.
\]
We next  prove that $p_{0}$ is
G$\mathrm{\hat{a}}$teaux differentiable at the point $(e_{X,0},e_{0})\in X\times R$. Pick a point $(u,v)\in S(X\times R)$. Then, for any $\varepsilon>0$, there exists a natural number $k_0\in N$ so that $4\sum_{k=k_0 +1}^{\infty} (512)^{6}\varepsilon_k<\varepsilon /8$.
Since  $p_{k_0}$ is
G$\mathrm{\hat{a}}$teaux differentiable at the point $(e_{X,0},e_{0})\in X\times R$, there exists a real number $t_0\in (0,1)$ such that
\[
\frac{1}{t}\left[ p_{k_0} \left( (e_{X,0},e_{0})+t (u,v) \right) + p_{k_0} \left( (e_{X,0},e_{0})-t (u,v) \right)- 2p_{k_0} \left( e_{X,0},e_{0} \right) \right]<\frac{1}{8}\varepsilon
\]
whenever $t\in (0,t_0)$. Since the convex function $f_{k}\left(t\right)= \mu_{ A_k }( (e_{X,0},e_{0})+t (u,v) )$ is   continuous on $R$, we get that
\[
\frac{1}{t} \left[ \mu_{ A_k }( (e_{X,0},e_{0})+t (u,v) )-\mu_{ A_k } ( e_{X,0},e_{0} )\right]\leq \mu_{ A_k }( (e_{X,0},e_{0})+ (u,v) )-\mu_{ A_k } ( e_{X,0},e_{0} )
\]
for each $t\in (0,t_0)$ and $k\in N$. Moreover, since $g_{k}(t)= \mu_{ A_k }( (e_{X,0},e_{0})+t (-u,-v) )$ is  a continuous convex function on $R$, we get that
\[
\frac{1}{t} \left[ \mu_{ A_k }( (e_{X,0},e_{0})-t (u,v) )-\mu_{ A_k } ( e_{X,0},e_{0} )\right]\leq \mu_{ A_k }( (e_{X,0},e_{0})- (u,v) )-\mu_{ A_k }( e_{X,0},e_{0} )
\]
for each $t\in (0,t_0)$ and $k\in N$. Therefore, by the above two inequalities, we have
\begin{eqnarray*}
&&\frac{1}{t}\left[ \mu_{ A_k } \left( (e_{X,0},e_{0})+t (u,v) \right) +\mu_{ A_k } \left( (e_{X,0},e_{0})-t (u,v) \right)- 2\mu_{ A_k } \left( e_{X,0},e_{0} \right) \right]
\\
&\leq& \mu_{ A_k }\left( (e_{X,0},e_{0})+ (u,v) \right) + \mu_{ A_k } \left( (e_{X,0},e_{0})- (u,v) \right)- 2\mu_{ A_k } \left( e_{X,0},e_{0} \right)
\end{eqnarray*}
whenever $t\in (0,t_0)$ and $k\in N$. Moreover, by the definition of $A_k$, it is easy to see that
if $p_1\left(e_X,e\right)\leq 2$ then
\[
\mu_{ A_k } \left(e_X,e\right)\leq (512)^{4}~\quad~~~~\mathrm{for}~~~\mathrm{every}~~~~~~~~~~~~~\quad~k\in N.
\]
Since $ p_{0}\left(e_{X,0},e_{0}\right)=1$, by the definition of $p_1$, we have $ p_{1}\left(e_{X,0},e_{0}\right)\leq 1$. Therefore,
by $ p_{1}\left(e_{X,0},e_{0}\right)\leq 1$ and $p_{1}\left( u, v\right)\leq2 $, we get that
\[
\mu_{ A_k } ( (e_{X,0},e_{0})+ (u,v) ) \leq \mu_{ A_k } (e_{X,0},e_{0}) + \mu_{ A_k } (u,v) \leq (512)^{4} + (512)^{4}\leq (512)^{5}
\]
for every $k\in N$. Moreover, by $p_{1}\left(- u,- v\right)\leq2 $ and the definition of $A_k$, we have
\[
\mu_{ A_k } ( (e_{X,0},e_{0})-(u,v) ) \leq \mu_{ A_k } (e_{X,0},e_{0}) + \mu_{ A_k } (-u,-v) \leq (512)^{4} + (512)^{4} \leq (512)^{5}
\]
for each $k\in N$. Since $p_0$ is a Minkowski functional,
by the above inequalities and the inequality $4\sum_{k=k_0 +1}^{\infty} (512)^{6}\varepsilon_k$ $<\varepsilon /8$,
we get that
if $t\in (0,t_0)$, then
\[
\frac{1}{t}\left[ p_{0} \left( (e_{X,0},e_{0})+t (u,v) \right) + p_{0} \left( (e_{X,0},e_{0})-t (u,v) \right)- 2p_{0} \left( e_{X,0},e_{0} \right) \right]\quad \quad \quad\quad
\]
\[
=~\frac{1}{t}\left[ p_{k_0} \left( (e_{X,0},e_{0})+t (u,v) \right) + p_{k_0} \left( (e_{X,0},e_{0})-t (u,v) \right)- 2p_{k_0} \left( e_{X,0},e_{0} \right) \right]\quad \quad \quad \quad \quad \quad \quad \quad \quad \quad \quad \quad \quad \quad \quad \quad \quad \quad \quad \quad \quad \quad \quad \quad
\]
\[
+16\sum_{k=k_0 +1}^{\infty} \frac{\varepsilon_k}{t}\left[  \mu_{ A_k } ( (e_{X,0},e_{0})+t (u,v) ) +  \mu_{ A_k }( (e_{X,0},e_{0})-t (u,v) )- 2 \mu_{ A_k } ( e_{X,0},e_{0} ) \right]
\]
\[
\leq~ \frac{1}{t}\left[ p_{k_0} \left( (e_{X,0},e_{0})+t (u,v) \right) + p_{k_0} \left( (e_{X,0},e_{0})-t (u,v) \right)- 2p_{k_0} \left( e_{X,0},e_{0} \right) \right]\quad \quad \quad \quad \quad \quad \quad \quad \quad \quad \quad \quad \quad \quad \quad \quad \quad \quad \quad \quad \quad \quad \quad \quad
\]
\[
+16 \left[\sum_{k=k_0 +1}^{\infty} \varepsilon_k \left[ \mu_{ A_k } ( (e_{X,0},e_{0})+ (u,v) ) +  \mu_{ A_k } ( (e_{X,0},e_{0})- (u,v) )- 2 \mu_{ A_k }( e_{X,0},e_{0} ) \right]  \right] \quad \quad \quad \quad \quad \quad \quad \quad \quad \quad \quad \quad \quad \quad \quad \quad \quad \quad \quad \quad \quad \quad \quad \quad
\]
\[
\leq~ \frac{1}{t}\left[ p_{k_0} \left( (e_{X,0},e_{0})+t (u,v) \right) + p_{k_0} \left( (e_{X,0},e_{0})-t (u,v) \right)- 2p_{k_0} \left( e_{X,0},e_{0} \right) \right]\quad \quad \quad \quad \quad \quad \quad \quad \quad \quad \quad \quad \quad \quad \quad \quad \quad \quad \quad \quad \quad \quad \quad \quad
\]
\[
+16 \left[\sum_{k=k_0 +1}^{\infty} \varepsilon_k \left[ \mu_{ A_k } ( (e_{X,0},e_{0})+ (u,v) ) +  \mu_{ A_k } ( (e_{X,0},e_{0})- (u,v) )+ 2 \mu_{ A_k }( e_{X,0},e_{0} ) \right]  \right]\quad \quad \quad \quad \quad \quad \quad \quad \quad \quad \quad \quad \quad \quad \quad \quad \quad \quad \quad \quad \quad \quad \quad \quad
\]
\[
 \leq ~ \frac{1}{8}\varepsilon  ~+~ 16 \left[\sum_{k=k_0 +1}^{\infty} \varepsilon_k \left[ (512)^{5} +  (512)^{5} + 2 \mu_{ A_k }( e_{X,0},e_{0} ) \right]  \right]\quad \quad \quad \quad \quad \quad \quad \quad \quad \quad \quad \quad \quad \quad \quad \quad \quad \quad \quad \quad \quad \quad \quad \quad
\]
\[
 \leq ~ \frac{1}{8}\varepsilon  ~+~ 16 \left[\sum_{k=k_0 +1}^{\infty} \varepsilon_k \left[ (512)^{5} +  (512)^{5} + 2 (512)^{5} \right]  \right]\quad \quad \quad \quad \quad \quad \quad \quad \quad \quad \quad \quad \quad \quad \quad \quad \quad \quad \quad \quad \quad \quad \quad \quad
\]
\[
 \leq~ \frac{1}{8}\varepsilon  ~+~ 2\left[\sum_{k=k_0 +1}^{\infty} (512)^{6}\varepsilon_k \right]< \varepsilon.\quad \quad \quad \quad \quad \quad \quad \quad \quad \quad \quad \quad \quad \quad \quad \quad \quad \quad \quad \quad \quad \quad \quad \quad\quad \quad \quad \quad \quad \quad \quad
\]
Moreover, since $p_0$ is  a continuous  Minkowski functional on $ X\times R$, by the above inequalities, we have the following formula
\[
\mathop {\lim }\limits_{t\to 0 }\frac{1}{t}\left[ p_{0} \left( (e_{X,0},e_{0})+t \left(u,v\right) \right) + p_{0} \left( (e_{X,0},e_{0})-t \left(u,v\right) \right)- 2p_{0} \left( e_{X,0},e_{0} \right) \right]=0.
\]
Hence we get that $p_{0}$ is
G$\mathrm{\hat{a}}$teaux differentiable at the point $(e_{X,0},e_{0})\in X\times R$.

\textbf{Step 5.} Since the sequence $\{s_k\}_{k=1}^{\infty}$ is a bounded decreasing sequence, we get that the sequence $\{s_k\}_{k=1}^{\infty}$ is a Cauchy sequence.
Let $s_k \to s_0$ as $k\to \infty$. Then it is easy to see that $s_0\geq 0$.
We claim that $p_0 \left( e_{X,0}, e_{0}\right)  = 1  $. In fact, noticing that $\left\|( e_{X,k}, e_{k})- ( e_{X,0}, e_{0})\right\| \to 0$ and $\alpha_k \to 1$, we get that
\begin{eqnarray*}
&&\mathop {\lim }\limits_{k \to \infty}\left\|( \alpha_k e_{X,k}, \alpha_k e_{k})- (e_{X,0}, e_{0})\right\|
\\
&=& \mathop {\lim }\limits_{k \to \infty} \left\|   \alpha_k \left( ( e_{X,k}, e_{k})- ( e_{X,0}, e_{0})   \right)    +(\alpha_k-1) \left ( e_{X,0}, e_{0}\right)             \right\|
\\
&=&     \mathop {\lim }\limits_{k \to \infty}  \left| \alpha_k\right| \cdot \left\|     ( e_{X,k}, e_{k})- ( e_{X,0}, e_{0})            \right\|           =0.
\end{eqnarray*}
Since $ p_{k+1}(\alpha_ke_{X,k},\alpha_ke_{k})=1$ and $ p_{k}(x,y)\leq  p_{0}(x,y)$ for every $(x,y)\in X\times R$,
we get that $ p_{0}(\alpha_ke_{X,k},\alpha_ke_{k})\geq 1$. Therefore, by
$\|( \alpha_k e_{X,k}, \alpha_k e_{k})- ( e_{X,0}, e_{0})\| \to 0$, we obtain that $ p_{0}(e_{X,0},e_{0})\geq 1$.
On the other hand, by $ p_{k+1}(\alpha_ke_{X,k},\alpha_ke_{k})=1$ and $  p_{k}(x,y)\leq  p_{i}(x,y)$ for every $(x,y)\in X\times R$ and $i\geq k$, we get that
\[
(\alpha_j e_{X,j},\alpha_j e_{j}) \in \left\{    (x,y)\in X\times R : p_{k+1} (x,y) \leq 1          \right\}~~\quad~~\mathrm{for}~~~~~~~\mathrm{every}~~~~~~~~\quad~~j \geq k.
\]
Since the set $\left\{    (x,y)\in X\times R : p_{k+1} (x,y) \leq 1          \right\}$ is a closed subset of $X\times R$,
by the formula $\left\|( \alpha_k e_{X,k}, \alpha_k e_{k})- ( e_{X,0}, e_{0})\right\| \to 0$,  we get that
\[
(e_{X,0},e_{0}) \in \left\{    (x,y) :  p_{k+1} (x,y) \leq 1          \right\}~~\quad~~\mathrm{for}~~~~~~~\mathrm{every}~~~~~~~~\quad~k\in N.
\]
This implies that
$p_{k+1} (e_{X,0},e_{0})  \leq 1  $ for each $k\in N$. Therefore, by $p_{k}\to p_0$, we get that $p_0 \left(e_{X,0},e_{0}\right)  \leq 1  $. Moreover, by  $ p_{0}\left(e_{X,0},e_{0}\right)\geq 1$,
we get that $p_0 \left(e_{X,0},e_{0}\right)  = 1  $.
Pick a functional $(x_0^{*},y_0^{*}) \in\partial f\left(x_0,y_0\right) $.
We first will prove that
\[
s_0=\mathop {\lim }\limits_{k \to \infty} s_k = \langle(x_0^{*},y_0^{*}), ( e_{X,0}, e_{0}) \rangle>0 .
\]
In fact, since $s_k \to s_0$, by the definitions of $p_k$ and $s_k$, it is easy to see that $s_0>0$. Moreover, from
the proof of Step 3, we have the following inequality
\[
\left\langle (x^{*},y^{*}), ( \alpha_k e_{X,k},\alpha_k e_{k})  \right\rangle >s_{k+1} -\frac{1}{2}\varepsilon_k^{32}-25\varepsilon_k
\]
for every $(u,v)\in U_{k+1}$ and $(x^{*},y^{*})\in \partial f\left(u,v\right)$. Therefore, by $(x_0^{*},y_0^{*})\in \partial f\left(x_0,y_0\right) $ and $(x_0,y_0) \in U_{k+1}$, we have the following inequalities
\[
\left\langle (x_{0}^{*},y_{0}^{*}), ( \alpha_k e_{X,k},\alpha_k e_{k})  \right\rangle >s_{k+1} -\frac{1}{2}\varepsilon_k^{32}-25\varepsilon_k.~~\eqno~~~~~~~~~~~~~~~~~~~~~~~~~~(2.26)
\]
Noticing that $\|( \alpha_k e_{X,k},\alpha_k  e_{k})- ( e_{X,0},e_{0})\| \to 0$ and $\varepsilon_{k}=\varepsilon_{k-1}/128$, by the formula (2.26) and $s_k \to s_0$,
we have the following inequalities
\[
\langle(x_0^{*},y_0^{*}), (e_{X,0}, e_{0}) \rangle= \mathop {\lim}\limits_{k \to \infty}\left\langle (x_0^{*},y_0^{*}), ( \alpha_k e_{X,k}, \alpha_k e_{k}) \right\rangle \geq \mathop {\lim}\limits_{k \to \infty}s_k = s_0.~~\eqno~~~~~~~~~~~~~~~~~~~~~~~~~~(2.27)
\]
On the other hand, since $(x_{0},y_{0})\in U_{k} $ and $(x_0^{*},y_0^{*})\in\partial f\left(x_0,y_0\right)$, by the definitions of $s_k$ and $s_k'$, we get that
$\left\langle (x_0^{*},y_0^{*}), (\alpha_k e_{X,k},\alpha_k e_{k}) \right\rangle\leq s_{k-1}'$. Therefore, by $s_k' \to s_0$ and $\langle (x_0^{*},y_0^{*}), (\alpha_k e_{X,k},$ $\alpha_k e_{k}) \rangle\leq s_{k-1}'$, we have the following inequalities
\[
\langle(x_0^{*},y_0^{*}), (e_{X,0},e_{0}) \rangle= \mathop {\lim }\limits_{k \to \infty}\left\langle (x_0^{*},y_0^{*}), ( \alpha_ke_{X,k}, \alpha_ke_{k}) \right\rangle \leq \mathop {\lim}\limits_{k \to \infty}s_{k-1}' = s_0.
\]
Therefore, by the formula (2.27),
we have $s_0=\left\langle(x_0^{*},y_0^{*}), ( e_{X,0}, e_{0}) \right\rangle$.
Since $\left(x_0^*, y_0^*\right)$ is any point in $\partial f\left(x_0,y_0\right)$, we get that $(e_{X,0},e_{0})\partial f$ is a single-valued mapping at the point $(x_{0},y_{0})\in X\times R$.

\par Secondly, we prove that the formula $s_0= \sup \left\{\left\langle(x_0^{*},y_0^{*}), (e_{X},e) \right\rangle  : p_0  (e_{X},e)\leq 1                 \right \}$ holds. In
fact,
since $p_0 \left( e_{X,0}, e_{0}\right)  = 1  $ and $s_0=\langle(x_0^{*},y_0^{*}), ( e_{X,0}, e_{0}) \rangle$,
we get that
\[
s_0 \leq \sup \left\{\langle(x_0^{*},y_0^{*}), (e_{X},e) \rangle  : p_0  (e_{X},e)\leq 1                   \right \}.
\]
Suppose that $s_0 \neq \sup \left\{\left\langle(x_0^{*},y_0^{*}), (e_{X},e) \right\rangle  : p_0 \left (e_{X},e\right)\leq 1                 \right   \}$. Then we get that
$s_0 < $ $\sup \{\left\langle(x_0^{*},y_0^{*}), (e_{X},e) \right\rangle  : p_0 \left (e_{X},e\right)\leq 1                   \}$. Hence there exists a real number $r\in (0,1)$ such that
\[
s_0+2r < \sup   \left \{\left\langle(x_0^{*},y_0^{*}), (e_{X},e) \right\rangle  : p_0  \left(e_{X},e\right)\leq 1                 \right   \}.
\]
Therefore, by the formula $s_k \to s_0$, we can assume without loss of generality that
\[
s_k +2r< \sup \left\{\langle(x_0^{*},y_0^{*}), (e_{X},e) \rangle  : p_0 \left (e_{X},e\right)\leq 1                   \right \}
\]
for each $k\in N$. Let $B_{k}\left(X\times R\right)= \{(x,y) \in X\times R: p_k(x,y)\leq 1\}$. Then, by the definitions of $s_k$ and $(x_{0},y_{0})\in U_{k} $, we have the following inequalities
\[
s_{k}~=~\sup\left\{ \sigma_{\partial f}\left((x,y),(e_{X},e)): ((x,y),(e_{X},e)\right)\in U_{k}\times S_{k}\left(X\times R\right) \right\}\quad
\]
\[
\quad\quad \quad \quad =~\sup\left\{ \sigma_{\partial f} \left((x,y),(e_{X},e)\right): ((x,y),(e_{X},e))\in U_{k}\times B_{k}\left(X\times R\right) \right\}\quad \quad \quad \quad \quad \quad\quad\quad \quad\quad \quad \quad \quad \quad \quad\quad\quad \quad\quad \quad \quad \quad \quad \quad\quad\quad \quad
\]
\[
\quad\quad \quad \quad\geq~ \sup \left\{\left\langle(x_0^{*},y_0^{*}), (e_{X},e) \right\rangle  : p_0  \left(e_{X},e\right)\leq 1                   \right \}\quad \quad \quad \quad \quad \quad\quad\quad \quad\quad \quad \quad \quad \quad \quad\quad\quad \quad\quad \quad \quad \quad \quad \quad\quad\quad \quad
\]
\[
\quad\quad \quad \quad >~ s_k ~+~2r,\quad \quad \quad \quad \quad \quad\quad\quad \quad\quad \quad \quad \quad\quad\quad\quad \quad\quad \quad \quad  \quad \quad\quad\quad \quad\quad \quad \quad \quad \quad \quad\quad\quad \quad
\]
this is a contradiction. Hence we have $s_0= \sup \left\{\langle(x_0^{*},y_0^{*}), (e_{X},e) \rangle  : p_0  (e_{X},e)\leq 1                   \right \}$.
Since the mapping $(e_{X,0},e_{0})\partial f$ is a single-valued mapping at the point $(x_{0},y_{0})\in X\times R$, we get that
\[
s_0=       \langle(x_0^{*},y_0^{*}), ( e_{X,0}, e_{0}) \rangle       \geq \sup \left\{    \sigma_{\partial f}\left ((x_0,y_0),(e_{X},e)\right):      p_0  \left(e_{X},e\right)\leq 1        \right \}.
\]
Therefore, by Lemma 2.6,
we obtain that $\partial f(x_0,y_0)\subset s_0\cdot\partial p_0  (e_{X,0},e_{0})$. Moreover,
since the functional $p_{0}$ is
G$\mathrm{\hat{a}}$teaux differentiable at the point $(e_{X,0},e_{0})\in X\times R$, we get that the set $\partial f(x_0,y_0)$ is a singleton.
Hence we have $\cap_{k=1}^{\infty}V_k = \{(x_0,y_0)\}\subset G$. Therefore, by Lemma 1.10, we get that $G$ is a dense $G_{\delta}$-subset of $X\times R$.
Hence we get that $X\times R$ is a weak Asplund space, which completes the proof.
\end{proof}

\begin{theorem}
Suppose that $X$ is a weak Asplund space and $Y$ is a finite dimensional space. Then the space $X\times Y$ is a weak Asplund space.

\end{theorem}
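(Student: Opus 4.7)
The plan is to reduce Theorem 2.7 to Theorem 2.1 by induction on the dimension of $Y$. Since being a weak Asplund space is a linear--topological property (the definition involves only open convex subsets, continuous convex functions, and Gateaux differentiability, all of which are preserved under linear homeomorphisms), I may replace $Y$ by $R^{n}$ where $n=\dim Y$, and prove by induction on $n\in N$ that $X\times R^{n}$ is a weak Asplund space whenever $X$ is.

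First I would set up the base case $n=1$: this is exactly the content of Theorem 2.1, which has just been proved in the preceding pages. Hence $X\times R$ is a weak Asplund space.

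For the inductive step, assume $X\times R^{n}$ is a weak Asplund space. I would use the natural linear isometry
\[
X\times R^{n+1}\;\cong\;\left(X\times R^{n}\right)\times R,
\]
where the right-hand side is equipped, for instance, with the product norm $\|((x,\xi),t)\|=\max\{\|(x,\xi)\|, |t|\}$. Applying Theorem 2.1 to the weak Asplund space $X'=X\times R^{n}$ (which is weak Asplund by the inductive hypothesis), one obtains that $X'\times R$ is a weak Asplund space. Transporting back through the above linear isomorphism, $X\times R^{n+1}$ is a weak Asplund space, completing the induction.

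Finally, since any finite dimensional Banach space $Y$ is linearly isomorphic to $R^{n}$ with $n=\dim Y$, the product $X\times Y$ is linearly isomorphic to $X\times R^{n}$, and therefore weak Asplund by the induction above. There is essentially no obstacle here, since all the analytical difficulty has been absorbed into Theorem 2.1; the only points that require (routine) verification are (i) stability of the weak Asplund property under linear isomorphism and (ii) the compatibility of the norm on $X\times R^{n+1}$ with the product structure $(X\times R^{n})\times R$ used when invoking Theorem 2.1, both of which are straightforward.
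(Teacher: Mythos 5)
Your proposal is correct and is essentially the paper's own argument: the paper's proof of Theorem 2.7 simply says ``By Theorem 2.1, it is easy to see that Theorem 2.7 is true,'' which implicitly is exactly the induction on $\dim Y$ that you spell out. You merely make explicit the routine steps (identifying $Y$ with $R^{n}$, iterating Theorem 2.1 via $X\times R^{n+1}\cong(X\times R^{n})\times R$, and invariance of the weak Asplund property under linear isomorphism) that the paper leaves to the reader.
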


\begin{proof}

By Theorem 2.1, it is easy to see that Theorem 2.7 is true, which finishes the proof.
\end{proof}

\section{Some problems}

This section proposes a set of research questions for subsequent discussion.

\begin{problem}
Let $X$ be a weak Asplund  space. Must $X\times l^{2}$ be a weak Asplund  space?

\end{problem}

\begin{problem}
Let $X$ be a weak Asplund  space. Must $X\times l^{p}$ be a weak Asplund  space?

\end{problem}

\begin{problem}
Let $X$ be a weak Asplund  space and $Y$ be a separable space. Must $X\times Y$ be a weak Asplund  space?

\end{problem}

\begin{problem}
Let $X$ be a weak Asplund  space and $Y$ be a reflexive space. Must $X\times Y$ be a weak Asplund  space?

\end{problem}

\textbf{Acknowledgement.} This research is supported by "China Natural Science Fund under grant 12271121".

\bibliographystyle{amsplain}

\begin{thebibliography}{99}




\bibitem{1}\  D.G. Larman and R.R. Phelps, G$\mathrm{\hat{a}}$teaux differentiability of convex functions on Banach space, J. London. Math. Soc. (2), 20(1979), 115--127.

\bibitem{2}\ Warren B. Moors and Sivajah Somasundaram, A Gateaux differentiability space
that is not weak Asplund,  Proc. Amer. Math. Soc., 134(2006) 2745--2754.


\bibitem{3}\ L. Cheng, M. Fabian, The product of a Gateaux differentiability space
and a separable space is a Gateaux differentiability space, Proc. Amer. Math. Soc., 129(2001) 3539--3541.


\bibitem{4}\ D. Preiss, Differentiability of Lipschitz functions on Banach spaces, J. Funct. Anal. 91
(1990), 312--345.


\bibitem{5}\ R.R. Phelps, Convex Functions, Monotone Operators and Differentiability, Lecture Notes in Math.,
vol. 1364, Springer-Verlag, New York, 1989.

\bibitem{6}\ M. J. Fabian, Gateaux differentiability of convex functions and topology: Weak Asplund
spaces, Canadian Mathematical Society Series of Monographs and Advanced Texts. Wiley Interscience, New York, 1997.

\bibitem{7}\ C. Wu, L. Cheng, A note on the differentiability of convex functions, Proc. Amer. Math. Soc. 121 (1994)
1057--1062.


\bibitem{8}\  I Namioka  and R.R. Phelps, Banach spaces which are Asplund spaces. Duke Math. J.,42(1975) 735--752.


\bibitem{9}\ C. Stegall, The duality between Asplund spaces and spaces with the Radon-Nikodym
property, Israel J. Math. 29(1978) 408--412.


\bibitem{10}\  Asplund E. Frechet differentiability of convex functions. Acta Math., 121(1968) 31--47.



\bibitem{11}\ Ekeland I and Lebourg G, Generic Frechet differentiability and perturbed optimization problems in Banach
spaces. Trans. Amer. Math. Soc., 224(1976) 193--216.



\bibitem{12}\ D.Preiss, R.Phelps and I.Namioka. Smooth Banach spaces, weak Asplund spaces and nonotone
operators or usco mappings. Israel J. Math.,72 (1990) 257--279.


\bibitem{13}\ J. O. Oxtoby, The Banach-Mazur game and Banach Category Theorem, in Contributions to
the Theory of Games, Vol. III, Annals of Math. Studies 39, Princeton, N.J., 1957, pp. 159--163.


\end{thebibliography}

\end{document}